\newcommand{\changefont}{\fontsize{7}{9}\selectfont}
\newcolumntype{L}[1]{>{\raggedright\let\newline\\\arraybackslash\hspace{0pt}}m{#1}}
\newcolumntype{C}[1]{>{\centering\let\newline\\\arraybackslash\hspace{0pt}}m{#1}}
\newcolumntype{R}[1]{>{\raggedleft\let\newline\\\arraybackslash\hspace{0pt}}m{#1}}
\newcommand{\cut}[1]{{}}
\newcommand{\EE}{{\mathbb{E}}} 				
\newcommand{\PP}{\mathbb{P}} 				
\newcommand{\RR}{\mathbb{R}}				
\newcommand{\HH}{\mathbb{H}}				
\newcommand{\NN}{\mathbb{N}}				
\DeclareMathOperator*{\argmin}{arg\,min}
\newcommand{\DeclareAutoPairedDelimiter}[3]{%
	\expandafter\DeclarePairedDelimiter\csname Auto\string#1\endcsname{#2}{#3}%
	\begingroup\edef\x{\endgroup
		\noexpand\DeclareRobustCommand{\noexpand#1}{%
			\expandafter\noexpand\csname Auto\string#1\endcsname*}}%
	\x}
\DeclareAutoPairedDelimiter{\p}{(}{)} 					
\DeclareAutoPairedDelimiter{\sp}{[}{]} 					
\DeclareAutoPairedDelimiter{\abs}{|}{|} 					
\DeclareAutoPairedDelimiter{\cp}{\{}{\}} 				
\DeclareAutoPairedDelimiter{\dotp}{\langle}{\rangle} 	
\DeclareAutoPairedDelimiter{\n}{\Vert}{\Vert} 			
\DeclareAutoPairedDelimiter{\cl}{\lceil}{\rceil}
\newcommand{\cF}{{\mathcal{F}}}
\newcommand{\cG}{{\mathcal{G}}}
\newcommand{\cO}{{\mathcal{O}}}
\newcommand{\bc}{\begin{center}}
\newcommand{\ec}{\end{center}}
\newcommand{\bdm}{\begin{displaymath}}
\newcommand{\edm}{\end{displaymath}}
\newcommand{\beq}{\begin{equation}}
\newcommand{\eeq}{\end{equation}}
\newcommand{\bfl}{\begin{flushleft}}
\newcommand{\efl}{\end{flushleft}}
\newcommand{\bt}{\begin{tabbing}}
\newcommand{\et}{\end{tabbing}}
\newcommand{\beqn}{\begin{align}}
\newcommand{\eeqn}{\end{align}}
\newcommand{\beqs}{\begin{align*}} 
\newcommand{\eeqs}{\end{align*}}  
\newtheoremstyle{Fancyplain}
{\topsep}   
{\topsep}   
{\itshape}  
{0pt}       
{\bfseries} 
{}         
{5pt plus 1pt minus 1pt} 
{\thmname{#1} \thmnumber{#2}. \thmnote{\normalfont\bfseries#3.}}
\theoremstyle{Fancyplain}
\newtheorem{thm}{Theorem}
\newtheorem{lem}{Lemma}
\newtheorem{prop}[lem]{Proposition}
\crefname{thm}{Thm.}{Thms.}
\Crefname{thm}{Theorem}{Theorems}
\crefname{lem}{Lem.}{Lems.}
\Crefname{lem}{Lemma}{Lemmas}
\crefname{cor}{Cor.}{Cors.}
\Crefname{cor}{Corollary}{Corollaries}
\crefname{prop}{Prop.}{Props.}
\Crefname{prop}{Proposition}{Propositions}
\newtheoremstyle{Fancydefinition}
{\topsep}   
{\topsep}   
{\normalfont}  
{0pt}       
{\bfseries} 
{}         
{5pt plus 1pt minus 1pt} 
{\thmname{#1} \thmnumber{#2}. \thmnote{\normalfont\bfseries#3.}}
\theoremstyle{Fancydefinition}
\newtheorem{defn}[lem]{Definition}
\newtheorem{example}{Example}
\newtheorem{rem}{Remark}
\newtheorem{asmp}{Assumption}
\crefname{defn}{Defn.}{Defns.}
\Crefname{defn}{Definition}{Definitions}
\crefname{example}{Ex.}{Exs.}
\Crefname{example}{Example}{Examples}
\crefname{xca}{Ex.}{Exs.}
\Crefname{xca}{Exercise}{Exercises}
\crefname{rem}{Rem.}{Rems.}
\Crefname{rem}{Remark}{Remarks}
\crefname{asmp}{Asmp.}{Asmps.}
\Crefname{asmp}{Assumption}{Assumptions}
\crefname{section}{Sec.}{Secs.}
\Crefname{section}{Section}{Sections}
\numberwithin{equation}{section}
\numberwithin{figure}{section}
\newcommand{\eps}{\epsilon}
\newcommand{\hx}{\hat{x}}
\begin{document}


\title{On Unbounded Delays in Asynchronous Parallel Fixed-Point Algorithms\thanks{This work was supported in part by NSF grant ECCS-1462398 and ONR grant N000141712162.}}

\author{Robert Hannah\footnote{Email: \href{mailto:roberthannah89@math.ucla.edu}{RobertHannah89@math.ucla.edu}} }
\author{Wotao Yin\footnote{Email: \href{mailto:wotaoyin@math.ucla.edu}{WotaoYin@math.ucla.edu}}}
\affil{Department of Mathematics, University of California, Los Angeles, CA 90095, USA}

\date{\today}
\maketitle

\begin{abstract}
The need for scalable solvers for massive optimization problems has motivated the development of asynchronous-parallel algorithms, where a set of nodes runs in parallel with little or no synchronization, thus computing with delayed information. This paper develops powerful Lyapunov-functions techniques, and uses them to study the convergence of the asynchronous-parallel algorithm ARock under \emph{potentially unbounded delays}.

ARock is a very general asynchronous algorithm, that takes many popular algorithms as special cases: For instance, asynchronous block gradient descent, forward backward, ADMM, etc. Therefore our results have broad implications, and a range of applications. ARock parallelizes a fixed-point iterations by letting a set of nodes randomly choose solution coordinates to update in an asynchronous parallel fashion. The existing analysis of ARock assumes the delays to be bounded, and uses this bound to set a step size that is important to both convergence and efficiency. Other works, though allowing unbounded delays, impose strict conditions on the underlying fixed-point operator, resulting in limited applications.

In this paper, convergence is established under unbounded delays, which can be either stochastic or deterministic. The proposed step sizes are more practical and larger than those in the existing work. The step size adapts to the delay distribution or the current delay being experienced in the system, instead of being limited by worst-case scenario delays. New Lyapunov functions, which are the key to analyzing asynchronous algorithms, are generated to obtain our results. A general strategy for generating Lyapunov functions is presented, which may find application in convergence analyses of other algorithms. A set of applicable optimization algorithms with large-scale applications are given, including machine learning and scientific computing algorithms.
\end{abstract}

\section{Introduction}
Today there is a great need for efficient algorithms to solve large-scale optimization problems or PDE problems in machine learning, big data, network analysis, cosmology, weather simulations, and other areas. The power of an individual core, after 30 years' exponential growth, has stopped increasing significantly since 2005. Before this point, running the same serial algorithm on the same problem would become faster year-over-year because of the increasing power of individual cores, however this is no longer the case. Moving forward, CPUs will only become faster through the addition of more cores rather than more powerful cores (see \cite{Sutter2005_free,Sutter2011_welcome}). Therefore the only way to utilize more powerful processors is to exploit parallelism. The trade off is that parallel algorithms are difficult to analyze and implement. However, the rewards for success are great: breakthroughs in parallel computing have implications for many other scientific fields.

\subsection{Motivation and importance of asynchronous algorithms}
The vast majority of parallel algorithms are \emph{synchronous} algorithms.
For instance the synchronous-parallel Gauss-Jacobi algorithm divides the problem space $\RR^N$ into $p$ coordinate blocks. At every iteration, these blocks are updated by a corresponding set of $p$ processors, and each processor's update is communicated to every other processor. Synchronous algorithms are simpler to analyze and implement; however, they have major drawbacks, such as synchronization penalty. At each iteration, all processors must wait for the results of the slowest processor to be received.

\subsubsection{Disadvantages of synchronous algorithms}
Synchronous algorithms may become impractical at scale, or on a busy network. Network latency is a major problem and bottleneck for parallel algorithms. Over a 20-25 year period on a wide range of systems, latency has improved by a factor of $20-40$ whereas CPU speeds have improved by a factor of $1000$ \cite{RumbleOngaroStutsmanRosenblumOusterhout2011_its}. This means that synchronizing at every step can be extremely expensive, and the divergence between processing speeds and latency will make this problem worse over time.

Moreover, these modest improvements in latency refer to the hardware's maximum performance. Latency and bandwidth are much worse in large data centers, which are typically very congested: Spikes in traffic can cause latency to increase temporarily by a factor of $20$ \cite{RumbleOngaroStutsmanRosenblumOusterhout2011_its}. Congestion also causes packet loss: Some data may fail to reach all parties, and must be sent again. If any computing node in a synchronous-parallel system experiences congestion or packet-loss, the entire system must wait for that one node. In addition,  dedicated access to computing nodes often cannot be guaranteed. Nodes may suddenly start being used by another user, temporarily go offline, etc. causing further unpredictable delays. The more processors that are used, the higher the likelihood that one will experience problems, and that all others will have to wait.

In addition, sometimes the structure of the problem makes synchronous-parallel solvers inefficient. For instance, it may not be feasible to break a problem into subproblems of equal difficulty. If the computing nodes have roughly equal computational power, nodes that are assigned easier subproblems will frequently be waiting on nodes assigned harder subproblems. The more heterogeneous the difficulty of subproblems, the more problematic this issue becomes.

What is needed is a more flexible framework for parallel optimization: One that is resilient to latency, unpredictable and congested networks, packet loss, heterogeneous subproblem difficulty, and other practical issues.

\subsubsection{Advantages of asynchronous algorithms}
A node in an asynchronous algorithm, instead of waiting to receive results from all other nodes, simply computes its next update using the most recent information it has received. Using outdated information will still result in convergence if the asynchronous algorithm is properly designed.

Latency, congestion,
and random delays will no longer cripple the system, because processors
can make progress without waiting on the results of the slowest processor.
Asynchronous algorithms are resilient to packet-loss, unexpected drains
on computing power, the loss of a node, and many other common problems
on large congested networks. The speed of asynchronous algorithms is more related to
the aggregate computing power and bandwidth of the system, rather than the speed of the slowest processor.

In addition, the algorithm discussed in this paper dynamically balances load with random coordinate block assignment: Processors take on as much work as they are currently able to, and no workload tuning is required.

There is, however, a trade-off: Using outdated information means
the error decreases less per iteration. However more iterations can occur per second because of vastly reduced synchronization penalty. Promising empirical
obtained in \cite{PengXuYanYin2015_arock} suggest that
this trade-off is a favorable one.

\subsection{Fixed-point algorithms}
In this paper, we consider convergence of ARock \cite{PengXuYanYin2015_arock} under possibly unbounded asynchronous delays. We analyze ARock because many popular algorithms are special cases of it: For instance asynchronous block gradient descent, proximal point, forward backward, etc. Therefore our results are very general.

ARock is a fixed-point algorithm. The fixed-point framework is used because it is what allows this generality, and many problems and algorithms can be written in the fixed-point form. Take a \textbf{nonexpansive} operator $T:\HH\to\HH$ (i.e. an operator with Lipschitz constant $L\leq 1$). The aim is to find a \textbf{fixed-point} of this operator: That is, a point $x^*\in\HH$ such that $Tx^*=x^*$. For example, smooth minimization of a convex function $f:\HH\to\RR$ with $L$-Lipschitz gradient $\nabla f$ is equivalent to finding a fixed point of the nonexpansive operator $T=I-\frac{2}{L}\nabla f$, where $I$ is the identity. The set of fixed points of an operator $T$ is denoted $\text{Fix}(T)$

We introduce a classic fixed-point algorithm: the Krasnosel'ski\u{\i}-Mann (KM) algorithm. ARock is essentially an asynchronous block-coordinate version of KM iteration.

\begin{defn}[Krasnosel'ski\u{\i}-Mann algorithm]
Let $\eps>0$, and $\eta^{k}$ be a series of step lengths in $\p{\eps,1-\eps}$. Let $T$ be a nonexpansive operator with at least one fixed point, and
\begin{align}\label{eq:S_def}
S &= I-T.
\end{align}
The KM Algorithm is defined by the following KM iteration: start from $x^0$ and, for $k=0,1,2,\ldots$, do
\begin{align}
\begin{aligned}
x^{k+1} &= x^{k}-\eta^{k}S(x^{k})
\end{aligned}
\end{align}
\end{defn}

KM iteration takes many popular algorithms as special cases. Below in \Cref{tab:Special-cases-KM} we demonstrate how common optimization algorithms, are simply special cases of KM iteration using the appropriate fixed-point operator. Since ARock is simply asynchronous KM, our results will apply to all algorithms in Table \ref{tab:Special-cases-KM}.

In Table \ref{tab:Special-cases-KM}, gradients such as $\nabla f,\nabla g,\nabla h$ are assumed to be Lipschitz continuous with constants $L_{f},L_{g},L_{h}$, respectively. The resolvent operator $J_{\gamma \partial f}=(I+\gamma \partial f)^{-1}$ is equivalent  to the proximal operator of $f$, that is,
\begin{align*}
J_{\gamma \partial f}=(I+\gamma \partial f)^{-1}(y) = \argmin_x f(x) +\frac{1}{2\gamma}\|x-y\|^2,\quad \forall y.
\end{align*}
The reflected resolvent $R_{\gamma\partial f}$ is defined as $2 J_{\gamma \partial f} - I$. Also, it is assumed that $\partial (f+g) = \partial f+ \partial g$ and $\partial (F+G) = \partial F+ \partial G$.

\medskip
\begin{table}
\centering
\caption{Selection of common algorithms that are special cases of KM iteration, and their corresponding fixed-point operator.\label{tab:Special-cases-KM}}
\begin{tabular}{>{\centering}p{3cm}|c|>{\centering}p{4cm}|>{\centering}p{2.5cm}}
        \hline
        \textbf{Optimization problem} & \textbf{Algorithm} & \textbf{Nonexpansive fixed-point operator $T$} & \textbf{Assumption}\tabularnewline
        \hline
        \hline
        $\min\,f\p{x}$ & Gradient descent & $I-\gamma\nabla f$ & 
$\gamma\in(0,\frac{2}{L_{f}}]$\tabularnewline
        \hline
        $\min\,f\p{x}$ & Proximal point & $J_{\gamma \partial f}$ & $\gamma>0$\tabularnewline
        \hline
        $\min f\p{x}+g\p{x}$ & Forward backward & $J_{\gamma \partial f}\circ\p{I-\gamma\nabla g}$ & 
$\gamma\in(0,\frac{2}{L_{g}}]$\tabularnewline
        \hline
$\min\{g\p{x}:x\in C\}$ & Projected gradient & $\mathrm{Proj}_C\circ\p{I-\gamma\nabla g}$ & $\gamma\in(0,\frac{2}{L_{g}}]$ \tabularnewline
        \hline
        $\min f\p{x}+g\p{x}$ & Peaceman-Rachford & $R_{\gamma\partial f}\circ R_{\gamma\partial g}$ & 
$\gamma >0$\tabularnewline
        \hline
        $\min \sum_{i=1}^d f_i\p{x}$ & Parallel Peaceman-Rachford & $(\frac{2}{d}\mathbf{1}\mathbf{1}^T-I)\circ R_{\gamma\partial \mathbf{f}}$ where $\mathbf{f}=[f_1;\ldots;f_d]:\HH^d\to\RR^d$ & $\gamma >0$\tabularnewline
        \hline
        $\min f\p{x}+g\p{x}$ & Douglas-Rachford & $\frac{1}{2}I+\frac{1}{2}R_{\gamma\partial f}\circ R_{\gamma\partial g}$ &
$\gamma >0$\tabularnewline
        \hline
        $\min f\p{x}+g\p{x}+h\p{x}$ & Davis-Yin & $I-J_{\gamma \partial g} +J_{\gamma \partial f}\circ (2J_{\gamma \partial g}-I-\gamma \nabla h\circ J_{\gamma \partial g})$ & $\gamma \in(0,\frac{2}{L_{h}}]$\tabularnewline
        \hline
        $\min\{f(x)+g(z):Ax+Bz=b\}$ & ADMM & $\frac{1}{2}I+\frac{1}{2}R_{\gamma\partial F}\circ R_{\gamma\partial G}$, where $F(y):=f^*(A^Ty)$, $G(y):=g^*(B^Ty)-b^Ty$ & 
$\gamma >0$
\tabularnewline
        \hline
\end{tabular}
\end{table}
\smallskip

\begin{rem}[Interpretation]
Columns 1 and 2 contain the optimization problem and the a common algorithm used to solve it. Column 3 gives the nonexpansive fixed-point operator $T$ corresponding to the algorithm. A fixed point of $T$ corresponds to a solution to the original optimization problem. When you apply the KM iteration to $T$, you obtain the algorithm in column 2. Column 4 contains assumptions necessary for convergence. The derivations of the algorithms and operators, as well as the proof of nonexpansiveness, are out of the scope of this paper. We refer the interested reader to \cite{BauschkeCombettes2011_convex,DavisYin2015_threeoperator}.

\end{rem}

\begin{rem}[Adding constraints]
Constraints can be introduced into the optimization problem with the addition of indicator functions\footnote{The indicator function $\iota_C(x)$ is equal to $0$ if $x\in C$, and $\infty$ otherwise.}. For example, $\min_{x\in C} f(x)$ is equivalent to $\min_{x\in \RR^N} f(x)+g(x)$ for $g(x)=\iota_{C}(x)$. Of course this restricts you to algorithms that do not assume $g(x)$ is differentiable. But algorithms that use the resolvent are possible, since $J_{\gamma \partial f}(y)$ equals the projection of $x$ onto $C$. For example, the projected gradient algorithm is simply forward-backward with $g(x)=\iota_C(x)$.

\end{rem}

\begin{example}[Gradient descent]
As an example, we prove the first row: That is, gradient descent is simply KM on the operator $I-(2/L_f)\nabla f$. By the Baillon-Haddad theorem, $\frac{1}{L_{f}}\nabla f$ is firmly nonexpansive, and therefore $T=I-\frac{2}{L_{f}}\nabla f$
is a nonexpansive operator. In addition we have:
\begin{align*}
        x^{*} \in\text{Fix}\p{T}
        \iff & \p{I-\p{2/L_f}}(x^{*}) =x^{*}\\
        \iff & \p{2/L_f}\nabla f(x^{*})  =0\\
        \iff & x^{*}~\text{is a minimizer of}~f.
\end{align*}
Therefore the corresponding fixed point problem for $T=I-(2/L_f)\nabla f$ is equivalent to the function minimization problem. Applying KM iteration to this $T$ with step size $\eta^{k}$ yields:
\begin{align*}
        x^{k+1} & = x^{k}-\frac{2\eta^{k}}{L_{f}}\nabla f(x^{k}),
\end{align*}
which is the gradient descent algorithm.

\end{example}

\subsection{The ARock algorithm}
We finally define the ARock algorithm. Take a space  $\HH$  on which to solve an optimization problem.   $\HH$  can be the real space $\RR^N$ or, more generally, a separable Hilbert space.
 Break this space into $m$ orthogonal subspaces: $\HH = \HH_{1} \times\ldots\times\HH_{m}$ so that vectors $x\in\HH$ can be written as $(x_1,x_2,\ldots,x_m)$  where each $x_i$  is $x$'s component in subspace $\HH_i$. Take a nonexpansive operator $T:\HH\to\HH$. Let $S=I-T$ and $Sx=\p{S_{1}x,\ldots,S_{m}x}$ where $S_{j}x$ denotes the $j$'th block of $Sx$.

\textbf{Convention:} \emph{Superscripts} will denote the iteration
number of a sequence of points $x^0,x^1,x^2,\ldots$. \emph{Subscripts} will denote different blocks of a vector or operator, e.g., $x=(x_1,x_2,\ldots,x_m)$ and $Sx=\p{S_{1}x,\ldots,S_{m}x}$. {For instance, $x^k_l$ is the $l$th block of iterate $x^k$. $S_l x^k$ is the $l$th block of $S(x^k)$.}

\begin{defn}[The ARock Algorithm]\label{def:ARock}
Let $\eta^0,\eta^{1},\eta^{2},\ldots$ be a series of step lengths. Let $T$ be a nonexpansive operator with at least one fixed point $x^*$, and $S=I-T$. Take a starting point $x^0\in\HH$. Then the ARock algorithm \cite{PengXuYanYin2015_arock} is defined
via the iteration: start from $x^0$ and, for $k=0,1,2,\ldots$, do

\begin{equation}\label{eq:def:ARock}
\mbox{for}~i=1,\ldots,m,\quad x_i^{k+1}  \gets
\begin{cases}
x_i^{k}-\eta^{k}S_{i}(\hx^{k}),& i=i\p{k},\\
x_i^{k},& i\neq i\p{k},
\end{cases}
\end{equation}
where the \textbf{delayed iterate} $\hx^{k}$ represents a possibly outdated version of the iteration vector $x^k$ and the
\textbf{block index sequence} $i(k)$ specifies which block of $x^k$ is being updated to produce the next iterate $x^{k+1}$.
\end{defn}

The ARock algorithm resembles KM iteration. However we use a delayed iterate $\hx^k$ because of asynchronicity, and we update one block at a time. $\hat{x}^k$ and $i(k)$ will be defined precisely in \Cref{sec:Setup}.

\subsection{Setup} \label{sec:Setup}

We now precisely define the block sequence $i(k)$, and the delayed iterate $\hx^k$, that we will use in this paper.

\subsubsection{Block sequence}\label{sec:Block-sequence}

\begin{asmp}[IID block sequence]\label{asmp:Block-sequence}
The sequence in which blocks of the solution vector are updated, $i(k)$, is a series of uniform IID random variables that take values $1,2,\ldots,m$ each with probability $1/m$.
\end{asmp}

A uniform distribution is \emph{not} strictly necessary, but is simpler. This assumption will hold if we allow all nodes to randomly update any block chosen in a uniform IID fashion (assuming all blocks have update times with the same distribution). However this may result in bad data locality\footnote{That is, implementing the algorithm in this way may require a lot of data movement. This is because every single time a node makes an update, it must be sent the data for the entire block that it is updating.}. An alternative is to assign the $m$ computing nodes one of $m$ blocks each, and assume that the times taken to compute updates follow IID Poisson processes \cite{PengXuYanYin2016_convergence}. Future work may involve weakening this assumption, perhaps extending the result beyond Poisson distributions.

\subsubsection{Delayed iterates}\label{sec:Delayed-iterates}
Let $\vec{j} = (j_1,\ldots,j_m) \in \NN^m$ be a vector, and $x^0,x^1,x^2,\ldots$ a series of iterates. Let $k\in\NN$ be the iteration number. We find it convenient to define:
\begin{align}
x^{k-\vec{j}} & = \p{x_1^{k-j_{1}},x_2^{k-j_{2}},\ldots,x_m^{k-j_{m}}}.
\end{align}
We define a series of \textbf{delay vectors} $\vec{j}(0), \vec{j}(1),\vec{j}(2),\ldots$ in $\NN^m$, corresponding to $x^0, x^1, x^2,\ldots$ respectively. The components of these delay vectors are as follows:
\begin{align}
\vec{j}\p{k} &= \p{j\p{k,1},j\p{k,2},\ldots,j\p{k,m}}.
\end{align}
The \textbf{current delay} is defined as\footnote{\textbf{Note:} The lack of the vector symbol distinguishes the current delay from the delay vector.}:
\begin{align}\label{eq:def:current-delay}
j\p{k} &= \max_{1\le i\le m}\cp{ j\p{k,i}}.
\end{align}
Using this, we define the delayed iterate.
%
\begin{defn}[Delayed iterate]
The delayed iterate $\hx^k$ is defined as\footnote{\textbf{Stronger asynchronicity:} It is possible to have more general asynchronicity, where different components of the \textit{same block}, $x_l\in\HH_l$, have different ages. This leads to similar results, and a similar proof, but the current setup was chosen for simplicity.}:
\begin{align}
\hx^k &= x^{k-\vec{j}\p{k}},~\mbox{or equivalently,}\\
\hx^k= \p{\hx_{1}^{k},\hx_{2}^{k},\ldots,\hx_{m}^{k}} &= \p{x_{1}^{k-j\p{k,1}},x_{2}^{k-j\p{k,2}},\ldots,x_{m}^{k-j\p{k,m}}}.
\end{align}
\end{defn}

Recall that asynchronous algorithms do not wait to receive results from all other nodes, but simply perform their updates with the most recent information they have available. Therefore processors may not necessarily have the most up-to-date information on $x^k$, but instead have a delayed iterate $\hx^k$. Every block of $\hx^{k}$ is potentially outdated by a different amount (that is, inconsistent reads are possible). $j\p{k,i}$ denotes how many iterates out of date block $i$ is at step $k$. Block $1$ may be up to date, so $j\p{k,1}=0$. Block $5$ may be $17$ iterations behind, so $j\p{k,5}=17$.

Clearly this is a very general model: There is a series of delay vectors $\vec{j}(0), \vec{j}(1), \vec{j}(2),\ldots$ that represents how old the information that a computing node has access to is. How these delay vectors are determined depends on the model of asynchronicity chosen. We consider two possibilities in this paper: stochastic and deterministic delays (see \Cref{sec:Stochastic-unbounded-delay,sec:Deterministic-unbounded-delay} respectively).

\subsection{New results and contributions} \label{sec:New-results-and-contributions}
The contributions of this paper are two-fold. First, we prove the convergence of ARock under unbounded delays that are either stochastic or deterministic. This is achieved by constructing and analyzing Lyapunov functions. The second contribution of this paper is to describe and demonstrate general techniques for constructing Lyapunov functions, which appears to be the key to analyzing the convergence of asynchronous algorithms, and many other types of algorithms.

We leave coding and numerical tests to our future work because they involve engineering issues that are beyond the scope of this work. For example, the current delay, which affects the step size, can be obtained by many different methods. Our ongoing work such as \cite{EdmundsPengYin2016_tmac} will develop codes and numerical results. There is however a recent implementation of asynchronous-parallel stochastic coordinate descent in C \cite{Huang2016_asynml}.

In rest of this subsection, we present these convergence results, but not in their most general forms. 
A more complete description of these results in all their generality is given in \Cref{sec:Proof-of-convergence-stochastic,sec:Proof-of-convergence-deterministic}.

\subsubsection{Stochastic unbounded delay} \label{sec:Stochastic-unbounded-delay}
The first result is the convergence of ARock under stochastic, potentially unbounded
delays. First we precisely define the assumptions on the delay:

\begin{defn}[Evenly old delays]\label{def:Evenly-old}
We say that delays are ``evenly old'' if there exists some constant $B$ such that, with probability $1$, we have $\abs{j\p{k,i}-j\p{k,l}} \leq B$ for all $k\in \NN$, $1\leq i \leq m$, and $1\leq l \leq m$.

\end{defn}

Delays can be arbitrarily large, but the ages of the various block are similar if they are evenly old. Clearly, if we have \textbf{bounded delay} (that is, with probability $1$ we have $j(k)\leq\tau$ for some $\tau$), this implies the evenly old property with constant $B=\tau$.

\begin{asmp}[Stochastic unbounded delays] \label{asmp:Stochastic-delays}
The sequence of delay vectors $\vec{j}\p{0},\vec{j}\p{1},\ldots$ are IID, and independent of the block sequence $i\p{0},i\p{1},\ldots$. In addition, they are evenly old.
\end{asmp}
Hence there exists a function $p:\NN^{m}\to\sp{0,1}$ such that, for all $k\in\NN$, the probability that $\vec{j}(k)$ equals some vector $\vec{v}$ is given by
\begin{align}
\PP\sp{\vec{j}\p{k}=\vec{v}} &= p\p{\vec{v}}.
\end{align}
Define
\begin{align}
P_{l} &= \PP\sp{j\p{k}\geq l}.
\end{align}
%

\begin{thm}[Convergence under stochastic unbounded delays] \label{thm:Convergence-stochastic-delays-intro}
Assume that the block sequence $i(k)$ is a uniform IID block sequence (\Cref{asmp:Block-sequence}) and that the delays vectors $\vec{j}(k)$ are an evenly old, IID sequence that is independent of the block sequence (\Cref{asmp:Stochastic-delays}). Let the step size be $\eta^k = c h$ for an arbitrary fixed\footnote{By ``arbitrary fixed'' we mean that the constant $c$ can be any number in $(0,1)$, so long as that number does not change. However it is possible to relax this.} $c\in\p{0,1}$, and $h$ given below. Then the iterates of ARock  converge weakly to a solution with probability $1$ if either of the following holds:

1. $\sum_{l=1}^{\infty}\p{lP_{l}}^{1/2}<\infty$,
 and setting $h=\p{1+\frac{1}{\sqrt{m}}\sum_{l=1}^{\infty}P_{l}^{1/2}\p{l^{1/2}+l^{-1/2}}}^{-1}$.

2. $\sum_{l=1}^{\infty}P_{l}^{1/2}l<\infty$, and setting $h=\p{1+\frac{2}{\sqrt{m}}\sum_{l=1}^{\infty}P_{l}^{1/2}}^{-1}$.

\end{thm}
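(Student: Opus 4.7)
The plan is to prove the theorem through a Lyapunov-function argument in which the Lyapunov function carries an \emph{infinite} tail of past successive increments, in contrast to the finite-window Lyapunov functions used in bounded-delay analyses. Specifically, I would work with
\begin{equation}
\xi^{k} = \n{x^{k}-x^*}^2 + \sum_{l=1}^{\infty} c_{l}\,\n{x^{k+1-l}-x^{k-l}}^2,
\end{equation}
for a positive, nonincreasing, summable sequence $(c_{l})$ to be matched to the tail probabilities $P_{l}$, adopting the convention $x^{j}=x^{0}$ for $j\le 0$ so that $\xi^{0}<\infty$. The target drift inequality is $\EE\sp{\xi^{k+1}\mid\cF^{k}} \le \xi^{k} - \mu\,\EE\sp{\n{S\hx^{k}}^{2}\mid\cF^{k}}$ for some $\mu>0$; once this is in hand, Robbins--Siegmund gives $\xi^{k}$ converging a.s.\ and $\sum_{k}\n{S\hx^{k}}^{2}<\infty$ a.s., after which a standard Opial / stochastic quasi-Fej\'er argument (using separability of $\HH$) delivers weak a.s.\ convergence of $x^{k}$ to a point of $\Fix(T)$.

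First I would expand, using the uniform IID block rule and the independence of $i(k)$ from $\vec{j}(k)$ (\Cref{asmp:Stochastic-delays}),
\begin{equation}
\EE\sp{\n{x^{k+1}-x^*}^{2}\mid\cF^{k}} = \n{x^{k}-x^*}^{2} - \tfrac{2\eta^{k}}{m}\dotp{x^{k}-x^*,S\hx^{k}} + \tfrac{(\eta^{k})^{2}}{m}\n{S\hx^{k}}^{2},
\end{equation}
and split the inner product via $x^{k}-x^*=(\hx^{k}-x^*)+(x^{k}-\hx^{k})$. Nonexpansiveness of $T$ gives the cocoercive bound $\dotp{\hx^{k}-x^*,S\hx^{k}}\ge \tfrac12\n{S\hx^{k}}^{2}$, isolating the ``delay-defect'' term $\dotp{x^{k}-\hx^{k},S\hx^{k}}$ as the only obstacle. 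The shift identity
\begin{equation}
\sum_{l=1}^{\infty} c_{l}\n{x^{k+2-l}-x^{k+1-l}}^{2} = c_{1}\n{x^{k+1}-x^{k}}^{2} + \sum_{l=1}^{\infty} c_{l+1}\n{x^{k+1-l}-x^{k-l}}^{2},
\end{equation}
combined with $\EE\sp{\n{x^{k+1}-x^{k}}^{2}\mid\cF^{k}}=\tfrac{(\eta^{k})^{2}}{m}\n{S\hx^{k}}^{2}$, reduces the Lyapunov drift to a comparison between the positive cross-term contribution and a negative discrete-derivative term $-\sum_{l}(c_{l}-c_{l+1})\n{x^{k+1-l}-x^{k-l}}^{2}$.

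The heart of the argument is the cross-term bound. Writing block-wise $x_{i}^{k}-\hx_{i}^{k}=\sum_{l=1}^{j(k,i)}(x_{i}^{k-l+1}-x_{i}^{k-l})$ and using the fact that the update at step $k-l$ is supported on the single block $i(k-l)$, I would apply Young's inequality $2\abs{\dotp{a,b}}\le\lambda_{l}\n{a}^{2}+\lambda_{l}^{-1}\n{b}^{2}$ with weights $\lambda_{l}$ and then take expectation over $i(k-l)$ and $\vec{j}(k)$. The evenly old hypothesis is crucial here: it lets me replace block-specific ages $j(k,i)$ by the single current delay $j(k)$ up to a bounded factor, so that the resulting expectations collapse to sums in $P_{l}=\PP\sp{j(k)\ge l}$, and the block structure contributes the $m^{-1/2}$ factors that appear in $h$. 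Choosing $\lambda_{l}\propto (lP_{l})^{-1/2}$ produces $c_{1}\propto\sum (lP_{l})^{1/2}$ and a coefficient on $\n{S\hx^{k}}^{2}$ proportional to $\sum (P_{l}/l)^{1/2}$, which together yield case~1 with $h^{-1}=1+m^{-1/2}\sum P_{l}^{1/2}(l^{1/2}+l^{-1/2})$. The simpler choice $\lambda_{l}\propto P_{l}^{-1/2}$, valid under the stronger summability in case~2, yields $h^{-1}=1+2m^{-1/2}\sum P_{l}^{1/2}$. In both cases, the step size $\eta^{k}=ch<h$ guarantees a strictly negative coefficient on $\n{S\hx^{k}}^{2}$ in the Lyapunov drift.

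The main obstacle is getting the cross-term bound tight enough to reach the stated step sizes: the Young weights $\lambda_{l}$, the Lyapunov coefficients $c_{l}$, and the constant $h$ must be chosen in lockstep so that the negative tail $-\sum_{l}(c_{l}-c_{l+1})\n{x^{k+1-l}-x^{k-l}}^{2}$ exactly absorbs the positive cross-term contribution while leaving a definite negative margin in front of $\n{S\hx^{k}}^{2}$. Once the drift inequality is established, verifying summability of $(c_{l})$ under each of the two tail conditions is immediate, and the final passage from supermartingale convergence of $\xi^{k}$ and $\n{S\hx^{k}}\to 0$ to weak a.s.\ convergence of $x^{k}$ is routine in the spirit of Combettes--Pesquet.
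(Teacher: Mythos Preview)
Your proposal follows essentially the paper's architecture: an infinite-tail Lyapunov function, the cocoercivity bound from nonexpansiveness of $T$, Young's inequality with tunable weights on the telescoped cross term, matching of $(c_l)$ to the tail probabilities, and Robbins--Siegmund. Your weight choices $\lambda_l\propto (lP_l)^{-1/2}$ and $\lambda_l\propto P_l^{-1/2}$ are exactly the two parameter selections the paper makes to recover cases~1 and~2.

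There is, however, a genuine gap in where you place the evenly-old hypothesis and in the step you label routine. The cross-term bound does \emph{not} need evenly old: the definition $j(k)=\max_i j(k,i)$ already gives $\sum_{l=1}^{j(k,i)}(\cdots)\le\sum_{l=1}^{j(k)}(\cdots)$, and that alone closes the drift inequality. Evenly old is actually indispensable in the passage you dismiss as routine. Robbins--Siegmund yields $\sum_k\n{S\hx^k}^2<\infty$, but Opial/demiclosedness needs $\n{Sx^k}\to 0$ at the \emph{true} iterate, and you also need the asynchronicity tail $\sum_l c_l\n{x^{k+1-l}-x^{k-l}}^2\to 0$ to extract convergence of $\n{x^k-x^*}$ from that of $\xi^k$. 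The paper handles this by (i) Borel--Cantelli on the summable $P_l$ to get $k-j(k)\to\infty$ a.s., (ii) evenly old to write $\hx^k=x^{(k-j(k))+\vec t(k)}$ with $\vec t(k)$ ranging over the \emph{finite} set $\{0,\ldots,B\}^m$, so that $\sum_k\n{S\hx^k}^2<\infty$ forces $\n{Sx^{n+\vec t}}\to 0$ for each fixed pattern, (iii) a convolution lemma ($c_l\in\ell^1$ against a bounded null sequence) to kill the tail, and (iv) a triangle-inequality transfer back to $\n{Sx^k}\to 0$, again using evenly old. Without the finite-pattern reduction this transfer does not go through, so the hypothesis earns its keep precisely where you omit it. A smaller point: you state the drift conditioned on $\cF^k$ but then average over $\vec j(k)$, which is $\cF^k$-measurable; the paper separates two filtrations, and the descent inequality carrying the $P_l$'s holds only with respect to the smaller one $\cG^k=\sigma(x^0,\ldots,x^k)$.
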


Convergence under unbounded delays in this setting has only been proven under very strong assumptions (See \Cref{sec:Related-work} for a discussion of existing results). Additionally, this result improves on the step size criterion of ARock and other similar algorithms if we are willing to assume stochastic delays (e.g. \cite{PengXuYanYin2015_arock,LiuWright2015_asynchronous,LiuWrightReBittorfSridhar2015_asynchronous}). So for instance, there may be a scenario where the maximum delay $\tau$ is very high, but delays near that size rarely occur. \Cref{thm:Convergence-stochastic-delays-intro} implies that asynchronous algorithms will convergence under a much larger step size than prior work. From this it can be argued that the Lyapunov function techniques introduced in this paper are very efficient, and may perhaps be used to obtain tighter convergence rates when applied to problems such as function minimization, rather than a general fixed-point problem.

Table \ref{tb:par} gives some example distributions, and corresponding values $P_l$ and step size $h$ (we only used the second $h$ since it is easier to calculate). We give an upper bound for $P_l$ and lower bound for $h$ to simplify expressions. Let $I[A](x)$ denote the characteristic function (i.e. a function that equal $1$ for $x\in D$ and $0$ otherwise).

\begin{table}
\centering
\caption{Example delay distributions and step sizes\label{tb:par}}
\begin{tabular}{ c | c | c }
\hline
\textbf{Distribution of $j(k)$}

&

\textbf{$P_l$ Upper bound}

&

\textbf{$h$ Step size lower bound}

\\ \hline\hline

$j(k)$ arbitrary, with $\leq \tau$

&

$1\cdot I[0\leq l \leq \tau](l)$

&

$\p{1+\frac{2\tau}{\sqrt{m}}}^{-1}$

\\ \hline

$j(k)$ uniform on $\cp{0,1,2,\ldots,\tau}$

&

$\p{1-\frac{l}{\tau+1}}I[0\leq l \leq \tau]$

&

$\p{1+\frac{4\tau}{3\sqrt{m}}}^{-1}$

\\ \hline

$\begin{array}{c}
\text{$j(k)$ exponentially decaying.}\\
\text{I.e. $\PP[j(k)=l]\leq C r^{l}$ for $1>r>0$.}
\end{array}$

&

$C\frac{r^{l/2}}{1-r}$

&

$\p{1+2\sqrt{\frac{C}{m}}\frac{r^{1/2}}{\p{1-r^{1/2}}^{3/2}}}^{-1} $

\\ \hline

Each of $p$ agents has update time $Y\in[a,b]$ 

&

*

&

$\p{1+\frac{2p\cdot \p{\frac{b}{a}+1}}{\sqrt{m}}}^{-1}$

\\ \hline

\end{tabular}
\end{table}

In addition to example distributions, we consider the step size in the following scenario. Let $Y$ be a random variable representing the time between when a node starts reading the solution vector $x$, and when its update is applied. Say that we have $a\leq Y \leq b$, and that there are $p$ computing nodes. In the worst-case scenario, a node takes $b$ seconds, and $p\cdot \p{\frac{b}{a}+1}$ updates have occurred during this time. Hence ignoring the specifics of the distribution, we have a delay bound $\tau=p\cdot \p{\frac{b}{a}+1}$. It can be seen from Table \ref{tab:Special-cases-KM}, that in this scenario, if $b/a$ doesn't grow, then $\sqrt{m}\gg p$ implies a step size of $c\in (0,1)$ will result in convergence.

\begin{rem}[Step size heuristic]
Even if the assumption of independent IID delays does not hold in practice, the preceding step size gives a useful heuristic to use given an empirical distribution of delays measured in a system. For example, when the number of blocks $m$ satisfies $\sqrt{m}\gg\sum_{l=1}^{\infty}P_{l}^{1/2}$, the step size sequence should be $\eta^k\approx c$, where $c\in(0,1)$ is an arbitrary fixed constant.

\end{rem}

\subsubsection{Deterministic unbounded delay} \label{sec:Deterministic-unbounded-delay}
The second result of this paper proves convergence of ARock and related algorithms under deterministic unbounded delays. In order to achieve convergence, it is necessary to use a step size $\eta^k$ that is a decreasing function of the current delay $j\p{k}$ (whereas in \Cref{thm:Convergence-stochastic-delays-intro}, a constant step size was sufficient). Also convergence is only on a family of subsequences.

\begin{asmp}[Deterministic unbounded delays]\label{asmp:Deterministic-delays}
The sequence of delay vectors $\vec{j}(0), \vec{j}(1), \vec{j}(2),\ldots$ is an arbitrary sequence in $\NN^m$, independent of $i(k)$, with $\
 j\p{k}<\infty$.
\end{asmp}

\begin{defn}[Convergence on subsequences of bounded delay] \label{def:Convergence-subsequences-bd-delay}
Let $x^0, x^1, x^2,\ldots$ be a sequence of iterates and $\vec{j}(0), \vec{j}(1), \vec{j}(2),\ldots$ a corresponding sequence of delay vectors, with $\liminf j\p{k}<\infty$. Let $Q_J$ be the subsequence of $x^0, x^1, x^2,\ldots$ where the iterates $x^k$ with current delay $j(k)>J$ are removed\footnote{$Q_J$ represent subsequences of bounded delay.}. We say that $x^k$ converges to $x^*$ on subsequences of bounded delay if $x^k$ converges to $x^*$ on every subsequence $Q_J$ for $J\geq\liminf j\p{k}$\footnote{$J\geq\liminf j\p{k}$ ensures that
$Q_J$ is an infinite subsequence.}.

\end{defn}

\begin{thm}[Convergence under deterministic unbounded delays] \label{thm:Convergence-deterministic-delays-intro}
Assume that the block sequence $i(k)$ is a sequence of uniform IID random variables (\Cref{asmp:Block-sequence}) and that the sequence of delay vectors $\vec{j}(0), \vec{j}(1), \vec{j}(2),\ldots$ is an arbitrary sequence in $\NN^m$, independent of $i(k)$, with $\liminf j\p{k}<\infty$ (\Cref{asmp:Deterministic-delays}). Pick arbitrary, fixed $c\in\p{0,1}$ and $\gamma>0$. Let the step size be
\begin{align}
\eta^{k} & = c \p{1+\frac{1}{\sqrt{m}}\p{1+\frac{1}{\gamma}+\frac{1}{2+\gamma}\p{j(k)+1}^{2+\gamma}}}^{-1}.\label{eq:Gamma-step-size-deterministic}
\end{align}
Then with probability $1$, the iterates of ARock weakly converge to a solution $x^*$ on all subsequences of bounded delay $Q_J$ for $J\geq\liminf j\p{k}$ (\Cref{def:Convergence-subsequences-bd-delay}), where $x^*$ does not depend on the bound $J$.
\end{thm}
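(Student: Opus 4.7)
The plan is to build a Lyapunov function of the form
\begin{align}
\xi^k = \|x^k - x^*\|^2 + \sum_{l=1}^{\infty} c_l \|x^{k-l+1} - x^{k-l}\|^2
\end{align}
for arbitrary $x^* \in \Fix(T)$, with weights $c_l = B/l^{\gamma}$ tuned to the delay-adaptive step size. Starting from the block update, conditioning on $\cF^k$, and averaging over $i(k)$ (uniform on $\{1,\dots,m\}$, independent of the delays) gives
\begin{align}
\EE[\|x^{k+1}-x^*\|^2 \mid \cF^k] = \|x^k-x^*\|^2 - \tfrac{2\eta^k}{m}\dotp{x^k-x^*,\, S\hx^k} + \tfrac{(\eta^k)^2}{m}\|S\hx^k\|^2.
\end{align}
Since $T$ is nonexpansive, $S=I-T$ is $\tfrac12$-cocoercive with $Sx^*=0$, so $\dotp{\hx^k-x^*,S\hx^k}\ge \tfrac12\|S\hx^k\|^2$. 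Splitting $x^k-x^*=(\hx^k-x^*)+(x^k-\hx^k)$ and applying Young's inequality with parameter $\alpha>0$ isolates the delay error $\|x^k-\hx^k\|^2$. Because only one block changes per step, a telescoping-and-Cauchy-Schwarz argument bounds
\begin{align}
\|x^k-\hx^k\|^2 \le j(k)\sum_{l=1}^{j(k)}\|x^{k-l+1}-x^{k-l}\|^2.
\end{align}

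Substituting into the expansion of $\EE[\xi^{k+1}\mid\cF^k]-\xi^k$ and re-indexing the sum produces
\begin{align}
\EE[\xi^{k+1}\mid\cF^k] - \xi^k \le -\tfrac{\eta^k}{m}\bigl(1-\eta^k(1+c_1)-\alpha\bigr)\|S\hx^k\|^2 - \sum_{l=1}^{\infty}\!\Bigl[(c_l-c_{l+1}) - \tfrac{\eta^k j(k)}{m\alpha}\mathbf{1}[l\le j(k)]\Bigr]\|x^{k-l+1}-x^{k-l}\|^2.
\end{align}
All coefficients must be nonnegative. Using $c_l-c_{l+1}\ge B\gamma/(l+1)^{1+\gamma}$, the requirement becomes $\eta^k(j)\cdot j \le B\gamma m\alpha/(l+1)^{1+\gamma}$ for every $j\ge l$. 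The stated step size is engineered precisely for this: it forces $\eta^k(j)(j+1)^{2+\gamma} \le c(2+\gamma)\sqrt{m}$, which together with monotonicity of $j/(j+1)^{2+\gamma}$ yields $\sup_{j\ge l}\eta^k(j)j \le c(2+\gamma)\sqrt{m}/(l+1)^{1+\gamma}$. Choosing $B=c(2+\gamma)/(\alpha\sqrt{m}\gamma)$ and a small enough $\alpha$ (the $\tfrac{1}{\gamma}$ and the constant $1$ in the step size accommodate $c_1$ and $\alpha$ respectively) closes the coefficient on $\|S\hx^k\|^2$ as well, producing
\begin{align}
\EE[\xi^{k+1}\mid\cF^k] \le \xi^k - \kappa\,\tfrac{\eta^k}{m}\|S\hx^k\|^2
\end{align}
for some $\kappa>0$.

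The Robbins--Siegmund supermartingale convergence theorem then gives, almost surely, that $\xi^k$ converges and $\sum_k \eta^k\|S\hx^k\|^2<\infty$; in particular $\|x^k-x^*\|^2$ converges a.s.\ for every $x^*\in\Fix(T)$. On any subsequence $Q_J$ the current delay is bounded by $J$, so $\eta^k$ is bounded below by a positive constant $\eta_{\min}(J)$, and hence $\|S\hx^k\|\to 0$ along $Q_J$. The same Cauchy-Schwarz bound, together with the fact that $\sum_l c_l\|x^{k-l+1}-x^{k-l}\|^2$ is bounded and $j(k)\le J$ along $Q_J$, forces $\|x^k-\hx^k\|\to 0$ along $Q_J$, so $x^k$ and $\hx^k$ share cluster points. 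Demiclosedness of $S=I-T$ at $0$ identifies every weak cluster point as a fixed point; Opial's lemma, applied using the almost sure convergence of $\|x^k-x^*\|^2$ for every $x^*\in\Fix(T)$, then yields weak convergence of the full $x^k$ along $Q_J$ to a single $x^\ast\in\Fix(T)$. Because the convergence of $\|x^k-x^*\|^2$ holds along the entire sequence (not only along $Q_J$), the Opial limit is independent of $J$.

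The main obstacle is the coefficient-matching step: the precise form of $\eta^k$ (with the term $(j(k)+1)^{2+\gamma}/(2+\gamma)$) must be paired with weights $c_l\propto l^{-\gamma}$ so that the delay-induced contribution is dominated term-by-term for \emph{every} possible value of $j(k)$, not just on average; once this supermartingale inequality is in hand the rest of the argument follows standard templates for Fej\'er-monotone-type analyses of KM iterations. A secondary subtlety is checking that the Opial limit does not depend on the subsequence $Q_J$, which hinges on the convergence of $\|x^k-x^*\|^2$ being established along the full index set.
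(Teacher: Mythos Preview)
Your overall architecture matches the paper's: Lyapunov function with weights $c_l\propto l^{-\gamma}$, supermartingale convergence, and Opial/demiclosedness on the bounded-delay subsequences. The gap is in the coefficient-matching step, and it is a real one.

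With a \emph{fixed} Young parameter $\alpha$, the two constraints you must satisfy are
\begin{align}
\text{(a)}\quad & 1-\alpha-\eta^{k}(1+c_{1})\ \ge\ \kappa>0,\\
\text{(b)}\quad & c_{l}-c_{l+1}\ \ge\ \frac{\eta^{k}\,j(k)}{m\,\alpha}\quad\text{for all }l\le j(k).
\end{align}
Constraint (b) forces $B=c_{1}\ge c(2+\gamma)/(\alpha\sqrt{m}\,\gamma)$, exactly as you write. But then, using only the $1/(\sqrt{m}\,\gamma)$ term in the step-size denominator, one gets $\eta^{k}c_{1}\le c^{2}(2+\gamma)/(\alpha(\sqrt{m}\,\gamma+1))$, and (a) becomes
\begin{align}
1-c\ >\ \alpha+\frac{c^{2}(2+\gamma)}{\alpha(\sqrt{m}\,\gamma+1)}.
\end{align}
For small $m$ and $c$ near $1$ this is impossible (e.g.\ $m=1$, $\gamma=1$ forces $c\lesssim 0.29$). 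So the claimed inequality $\EE[\xi^{k+1}\mid\cF^{k}]\le\xi^{k}-\kappa\,\frac{\eta^{k}}{m}\|S\hx^{k}\|^{2}$ does not hold for arbitrary $c\in(0,1)$, which is what the theorem requires.

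The paper avoids this by applying Cauchy--Schwarz \emph{term by term} in the telescoping sum, with the Young parameter scaled by $\eta^{k}$: for each increment $x^{k+1-i}-x^{k-i}$ it uses
\begin{align}
\bigl|\langle S_{l}\hx^{k},\,x_{l}^{k+1-i}-x_{l}^{k-i}\rangle\bigr|\ \le\ \tfrac{1}{2}\Bigl(\tfrac{\eta^{k}}{\eps_{i}}\|S_{l}\hx^{k}\|^{2}+\tfrac{\eps_{i}}{\eta^{k}}\|x_{l}^{k+1-i}-x_{l}^{k-i}\|^{2}\Bigr),
\end{align}
with $\eps_{i}=\sqrt{m}\,i^{-(1+\gamma)}$. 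This makes the contribution to the $\|S\hx^{k}\|^{2}$ coefficient equal to $(\eta^{k})^{2}\sum_{i\le j(k)}1/\eps_{i}$, which is \emph{quadratic} in $\eta^{k}$ and therefore absorbed exactly by the step-size rule $\eta^{k}=c\,h_{j(k)}$; the residual factor is $1-c>0$ for \emph{any} $c\in(0,1)$. Equally important, the weights on the difference terms become $\eps_{i}$, independent of $k$ and $j(k)$, so the Lyapunov coefficients $c_{i}=\sum_{l\ge i}\eps_{l}$ close the recursion without any sup over $j$. If you replace your single $\alpha$ by these $\eta^{k}$-scaled, index-dependent parameters, the rest of your argument (supermartingale, $\|x^{k+1}-x^{k}\|\to 0$, FPR convergence on $Q_{J}$, Opial) goes through essentially as you wrote it.
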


This step size rule assumes a worst case scenario. In practice it can be used if it was necessary to be certain that the algorithm converges. Even if network conditions are very unfavorable, making delays large, the algorithm with the step size \eqref{eq:Gamma-step-size-deterministic} makes some progress at every step. This result could also be used in the bounded delay regime when the bound $\tau$ is not known in advance. In previous results, $\tau$ is needed in advance to calculate the correct step size.

\Cref{thm:Convergence-deterministic-delays-intro} also provides a rule adaptive to the current delay. If the step size were set according to $\tau$ (which is the case for the vast majority of recent papers), the step size may be exceedingly pessimistic if a delay of $\tau$ is very rare. However our result implies a much larger allowable step size when delays are smaller (even if they may becomes large at some point in the future). When the delays are bounded (but the bound is possibly unknown to us), \Cref{thm:Convergence-deterministic-delays-intro} implies weak convergence of the full sequence with probability $1$, not merely on subsequences of bounded delay.

The step size rule also gives the following useful heuristic: When the number of blocks $m$ satisfies $\sqrt{m}\gg\p{j\p{k}+1}^{2+\gamma}$, the step size should be $\eta^k\approx c\in (0,1)$.

\subsection{Applications}

\begin{table}
\caption{Applications of ARock\label{tab:Applications-of-ARock}}
\small
\begin{tabular}{|>{\centering}p{2.7cm}|>{\centering}p{3.8cm}|>{\centering}p{8.3cm}|}
        \hline
        \textbf{Convex Optimization Problem} & \textbf{Setup} & \textbf{ARock Iteration}\tabularnewline
        \hline
        \hline
        \textbf{Smooth minimization:}  min $f\p{x}$ & $\nabla f$ is $L$-Lipschitz,
        $\nabla f=\begin{pmatrix}
                \nabla_1 f\\
                \vdots\\
                \nabla_m f
            \end{pmatrix}
  $
& $x_{i_{k}}^{k+1}\leftarrow x_{i_{k}}^{k}-\frac{2\eta^k}{L}\nabla f_{i_k}(\hx^{k})$\tabularnewline
        \hline
        \textbf{Constrained minimization:} min $f\p{x}$ subject to $\ell\le x\le u$  & same as above
& $x_{i_{k}}^{k+1}\leftarrow
x_{i_{k}}^{k}-\eta^k\bigg(\hat{x}^k_{i_k}-\mathrm{Proj}_{[\ell_{i_k},u_{i_k}]}\big(\hat{x}^k_{i_k}-\frac{2}{L}\nabla f_{i_k}(\hx^{k})\big)\bigg)$\tabularnewline
        \hline
  \textbf{Composite minimization (ERM model):}
  min $f\p{x}+g\p{x}$ &
  same as above, plus $g\p{x}=\sum_{i=1}^mg_i\p{x_i}$
  &
  $x_{i_{k}}^{k+1}\leftarrow$ $x_{i_{k}}^{k}-\eta^k\bigg(\hat{x}^k_{i_k}-\mathrm{prox}_{\frac{2}{L}g_i}\big(\hat{x}^k_{i_k}-\frac{2}{L}\nabla f_{i_k}(\hx^{k})\big)\bigg)$\tabularnewline
                \hline
  \textbf{Kernel SVM:}
  $\min_s\,\frac{1}{2}s^TQ s -e^Ts$ subject to $\sum_{i} y_is_i=0$, $0\le s_i\le C,~\forall i$ &
  training set $\{x_i,y_i\}$, $y_i\in\{\pm 1\}$, kernel $k(\cdot,\cdot)$, $Q_{ij}=y_iy_jk(x_i,x_j)$, applies Davis-Yin
  &
  See the last equation in \cite[Section 5.2.1]{PengWuXuYanYin2016_coordinate}, and apply it with damping $\eta^k$
  \tabularnewline
                \hline
        \textbf{Linear System:}

                Solve $Ax=b$  & $A$ is symmetric positive definite,
                $\begin{pmatrix}
                        \text{--}~A_{1}~\text{--}\\
                        \vdots\\
                        \text{--}~A_{m}~\text{--}
                        \end{pmatrix}
      x
      =\begin{pmatrix}
                        b_{1}\\
                        \vdots\\
                        b_{m}
                        \end{pmatrix}$
& $x_{i_{k}}^{k+1}\leftarrow x_{i_{k}}^k-\p{\frac{2\eta^{k}}{M}}\p{A_{i_k}\hx^{k}+b_{i_k}}$\tabularnewline
                \hline
                \textbf{Linear System:}
                Solve $Ax=b$ & $A=D+R$ where

                $D$ is diagonal, $M$ off-diagonal,

                $\rho\p{-D^{-1}R}\leq1$ & $x_{i_{k}}^{k+1}\leftarrow x_{i_{k}}^{k}-\eta^{k}\p{\p{I+D^{-1}M}\hx^k-D^{-1}b}_{i_{k}}$\tabularnewline
                \hline
        \end{tabular}
\end{table}

As mentioned, ARock takes a wide variety of algorithms as special cases, such as gradient descent, proximal point, Douglas-Rachford (and Peaceman-Rachford), forward-backward, ADMM, etc. For ARock to be \textit{practical} however, it needs to be possible to efficiently parallelize the corresponding serial iteration. For instance, ARock for smooth minimization is just asynchronous block gradient descent: $x_{i_k}^{k+1}=x_{i_k}^{k}-\frac{\eta^{k}}{L}\nabla_{i_{k}}f\p{\hx^{k}}$. If it is not significantly easier to calculate $\nabla_{i_{k}}f$ than to calculate the full gradient, then ARock is impractical, and parallelization may yield no speedup.

However ARock is practical for a wide variety of algorithms and applications; see the paper \cite{PengWuXuYanYin2016_coordinate} for the structures of operators that give rise to parallelizable ARock algorithms. We present a small sample of applications in \Cref{tab:Applications-of-ARock} (more applications are found in \cite{PengXuYanYin2015_arock,PengWuXuYanYin2016_coordinate}).


In Table \ref{tab:Applications-of-ARock}, $\mathrm{Proj}_{[\ell_{i_k},u_{i_k}]}$ projects a scalar to the interval $[\ell_{i_k},u_{i_k}]$. Each algorithm in column 3, is simply an example of ARock with the appropriate fixed-point operator.

\subsection{Related work}\label{sec:Related-work}
Asynchronous algorithms were first proposed by Chazan and Miranker in \cite{ChazanMiranker1969_chaotic} to solve linear systems. Since then, asynchronous algorithms have been applied to many fields including nonlinear systems, differential equations, consensus problems, and optimization.

Until relatively recently, authors assumed a deterministic sequence of block updates: $i\p{1},i\p{2},\ldots$ with very little restriction. However, this imposes stronger restrictions on the problem. The delays $\vec{j}(k)$ are usually also assumed to be deterministic, but this appears to be relatively less restrictive. In \cite{BertsekasTsitsiklis1997_parallel}, the authors describe two basic classes of deterministic asynchronous scenarios that appeared in the literature.

\begin{defn}[Totally asynchronous iteration]
Every block, $x_{i}$, is updated infinitely many times. Information from iteration $k$ (i.e. the components of $x^{k}$) is only used a finite number of times.

\end{defn}
Total asynchronicity is a very weak condition that leads to convergence results with limited applicability (though there do exist applications to linear problems and strictly convex network flow problems \cite{BertsekasTsitsiklis1997_parallel,TsengBertsekasTsitsiklis1990_partially}). For instance, asynchronous linear iteration $x\mapsto Ax+b$ will only converge in general if the largest eigenvalue of $\abs{A}$ (the matrix obtained by taking an absolute value of every entry) is strictly less than $1$ 
(\cite{ChazanMiranker1969_chaotic,BertsekasTsitsiklis1997_parallel}).

\begin{defn}[Partially asynchronous iteration]
There exists an integer $B$ such that every component, $x_{i}$, is updated at least once every $B$ steps; and the information used by the processors cannot be older than $B$ steps (bounded delay).
\end{defn}

Partially asynchronous algorithms have better convergence properties.
For instance, from \cite{Tseng1991_rate}:

\begin{thm}
For strongly convex $f$ with $\nabla f$ Lipschitz, there is a step size $\gamma_{1}$ such that for any step size $0<\gamma<\gamma_{1}$, asynchronous gradient descent with partial asynchronicity converges at least linearly to a minimum, with rate $\cO\p{\p{1-c\gamma}^{k}}$
for some constant $c$.

\end{thm}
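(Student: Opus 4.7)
The plan is to establish linear convergence by building a Lyapunov function that combines function-value suboptimality with a weighted sum of recent iterate differences, chosen so that the error from using the stale gradient $\nabla f(\hx^k)$ in place of $\nabla f(x^k)$ is absorbed. First, from $L$-smoothness applied to the update $x^{k+1} = x^k - \gamma \nabla f(\hx^k)$ I would derive the standard descent inequality
\begin{align*}
f(x^{k+1}) \leq f(x^k) - \gamma \langle \nabla f(x^k), \nabla f(\hx^k) \rangle + \tfrac{L\gamma^2}{2} \|\nabla f(\hx^k)\|^2,
\end{align*}
split $\nabla f(\hx^k) = \nabla f(x^k) + e^k$, and use Lipschitz continuity together with partial asynchronicity ($\hx^k$ differs from $x^k$ only in updates from the last $B$ steps) to bound the delay error
\begin{align*}
\|e^k\| \leq L\|x^k - \hx^k\| \leq L \sum_{j=k-B}^{k-1} \|x^{j+1} - x^j\|.
\end{align*}
After Young's inequality this yields a one-step bound of the form
\begin{align*}
f(x^{k+1}) - f^* \leq (f(x^k) - f^*) - C_1 \gamma \|\nabla f(x^k)\|^2 + C_2 \gamma^3 L^2 \sum_{j=k-B}^{k-1} \|x^{j+1} - x^j\|^2.
\end{align*}

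Second, I would introduce
\begin{align*}
V_k = f(x^k) - f^* + \sum_{j=1}^{B} \alpha_j \|x^{k-j+1} - x^{k-j}\|^2
\end{align*}
with weights $\alpha_j$ decreasing in $j$ (heuristically $\alpha_j \propto \gamma L^2 (B - j + 1)$). Since $\|x^{k+1} - x^k\|^2 = \gamma^2 \|\nabla f(\hx^k)\|^2 \leq 2\gamma^2 \|\nabla f(x^k)\|^2 + 2\gamma^2 \|e^k\|^2$, when the new difference enters the Lyapunov sum at $j=1$ and the older ones shift by one slot, the positive delay-penalty term from the one-step bound cancels against the shifted weights, provided $\gamma$ is smaller than an explicit threshold $\gamma_1 = \gamma_1(L,B)$. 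This leaves
\begin{align*}
V_{k+1} \leq V_k - C_4 \gamma \|\nabla f(x^k)\|^2.
\end{align*}

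Third, I would invoke the Polyak-Lojasiewicz inequality $\|\nabla f(x)\|^2 \geq 2\mu(f(x) - f^*)$, which holds automatically for $\mu$-strongly convex $f$, together with a bound of the tail $\sum_j \alpha_j \|x^{k-j+1}-x^{k-j}\|^2$ by past squared gradients (and hence by past suboptimalities via PL and smoothness of the trajectory). This promotes the previous display to
\begin{align*}
V_{k+1} \leq (1 - c\gamma)\, V_k
\end{align*}
for a constant $c = c(\mu,L,B) > 0$ independent of $\gamma$; iterating gives $f(x^k) - f^* \leq V_k \leq (1 - c\gamma)^k V_0$, which is the claimed $\cO((1-c\gamma)^k)$ rate.

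The main obstacle will be tuning the weights $\{\alpha_j\}_{j=1}^{B}$ so that simultaneously (i) the stale-gradient error $\|e^k\|^2$ is fully telescoped away by the one-slot shift of the Lyapunov sum, and (ii) the residual $-C_4 \gamma \|\nabla f(x^k)\|^2$ remains large enough, after being charged for the tail of $V_k$, to yield a genuine $(1 - c\gamma)$-contraction rather than merely $V_{k+1} \leq V_k$. A secondary subtlety, specific to Tseng's original block-coordinate setting, is that only one block of $x^k$ is updated per step, so the per-step descent inequality must be aggregated over a window of $B$ iterations (during which partial asynchronicity guarantees every coordinate is touched at least once) rather than used pointwise; this changes the explicit values of $\gamma_1$ and $c$ but not the overall Lyapunov structure.
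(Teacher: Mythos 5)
You should first be aware that this paper contains no proof of the quoted theorem: it appears in the related-work section as a citation of \cite{Tseng1991_rate}, and the authors explicitly note that the formulas for $c$ and $\gamma_1$ are complicated and omitted. So there is no in-paper argument to compare yours against. Judged on its own terms, your route---a Lyapunov function coupling $f(x^k)-f^*$ with a weighted window of squared iterate differences, with the weights tuned so the stale-gradient error telescopes---is a legitimate modern strategy, and it is essentially the same machinery this paper itself develops for ARock (classical error plus asynchronicity error, cf.\ the Lyapunov function of Definition \ref{def:Lyapunov-definition} and the descent lemmas), rather than Tseng's original argument.

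There are, however, two genuine gaps. The larger one is the point you dismiss as a secondary subtlety: under partial asynchronicity the iteration is \emph{not} $x^{k+1}=x^k-\gamma\nabla f(\hx^k)$; at step $k$ only the blocks whose processors act then are moved, each using its own delayed read, and the model merely guarantees every block is updated once per window of $B$ steps (deterministically---there is no IID block sampling here, so you cannot recover the full gradient in conditional expectation the way this paper does for ARock). Hence the per-step smoothness bound never yields the $-\gamma\|\nabla f(x^k)\|^2$ term your contraction rests on; you only get block-partial gradients evaluated at different delayed points, and recovering a full-gradient decrease requires aggregating over a $B$-window while controlling the drift of $x$ inside the window. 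That aggregation is the crux of the result, not a change of constants, and your sketch does not carry the Lyapunov argument through it. Second, to upgrade $V_{k+1}\le V_k - C_4\gamma\|\nabla f(x^k)\|^2$ to $V_{k+1}\le(1-c\gamma)V_k$ you must also contract the difference tail $\sum_j\alpha_j\|x^{k-j+1}-x^{k-j}\|^2$; the one-slot shift of your linearly decaying weights only buys a factor like $1-1/B$, and your appeal to ``past suboptimalities via PL'' is not a valid one-step closure as stated, since those increments involve $f$ at delayed points that $V_k$ does not control---one needs either geometrically decaying weights or a window recursion of the form $V_{k+1}\le(1-c\gamma)\max_{k-B\le s\le k}V_s$. (A minor slip: with $\sum_{j}\|x^{j+1}-x^j\|^2$ retained, the delay-penalty coefficient in your one-step bound is of order $\gamma L^2$, not $\gamma^3L^2$, unless you also substitute $\|x^{j+1}-x^j\|=\gamma\|\nabla f(\hx^{j})\|$.)
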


However, the formulas for $c$ or $\gamma_{1}$ are complicated, and the authors did not include them. These constants are also tiny, because one needs to assume the worst-case scenario. The maximum delay $B$ needs to be known in advance to determine the step size.

Stochastic asynchronous algorithms began to appear recently, a popular example being ``Hogwild!'' \cite{RechtReWrightNiu2011_hogwild}. These algorithms always assume a bounded delay ($j\p{k,i}\leq\tau$ for all $k$ and $i$), and that the sequence of blocks $i\p{k}$ is chosen independently and identically with $\PP\sp{i\p{k}=j}=p_{j}$ for fixed nonzero probabilities $p_{j}$. In \cite{AvronDruinskyGupta2014}, the authors prove linear convergence for an asynchronous stochastic linear solver. In \cite{LiuWrightReBittorfSridhar2015_asynchronous}, the authors prove function-value convergence for asynchronous stochastic coordinate descent. Under the assumption that the step size exponentially decays in $\tau$ in a certain way, they prove $\cO\p{1/k}$ convergence for $f$ convex with $\nabla f$ Lipschitz, and linear convergence when $f$ is also strongly convex. This was extended in \cite{LiuWright2015_asynchronous} to composite objective functions. However point convergence ($x^k\to x^*$) is not attained for the non-strongly convex case in these papers\footnote{In the non strongly-convex case, point convergence is stronger than function-value convergence (the former gives the latter, but not vice versa). In the strongly-convex case, they are equivalent.}. The work presented in this paper generalizes and strengthens results from these recent papers on stochastic asynchronous algorithms.

There are recent unbounded delay results in the stochastic unconstrained convex optimization setting \cite{DuchiChaturapruekRe2015_asynchronous,SraYuLiSmola2015_adadelay,AgarwalDuchi2011_distributed}. It is hard to compare results from a different optimization setting. However we note the following: We obtain point convergence ($x^k\rightharpoonup x^*$) rather than function-value convergence ($f\p{x^k}\to f\p{x^*}$) for convex $f$ that is not necessarily strongly convex. The deterministic unbounded delay criterion in \Cref{thm:Convergence-deterministic-delays-intro} is weaker than all other delay assumptions. The step size in these papers converges to $0$ as $k\to\infty$, which is an inevitable part of the problem setting. This makes asynchronicity error less of a problem. Nonetheless, in this paper, we are able to prove convergence in our setting with a step size rule that is only a function of the delay distribution despite unbounded delays (\Cref{thm:Convergence-stochastic-delays-intro}). The step size rule is invariant in $k$, and does not converge to $0$. \Cref{thm:Convergence-deterministic-delays-intro} features a step size that adapts to current delay conditions, once again invariant in $k$, which is cited as a key advantage of \cite{SraYuLiSmola2015_adadelay}.

Our result in \Cref{thm:Convergence-deterministic-delays-intro} can be seen as a halfway point between partial and total asynchrony. Using a slightly stronger assumption than total asynchronicity, we are able to prove a much stronger convergence result.

\subsection{Structure of the paper}
The remainder of the paper is organized as follows. \Cref{sec:Proof-of-convergence-stochastic,sec:Proof-of-convergence-deterministic} give the convergence proofs for the stochastic and deterministic cases, respectively. In \Cref{sec:General-strategy}, we describe a general strategy for generating Lyapunov functions, which appear to be the key to analyzing asynchronous algorithms (as well as many others).

\section{Proof of Convergence for Stochastic Unbounded Delays} \label{sec:Proof-of-convergence-stochastic}
This section proves  \Cref{thm:Convergence-stochastic-delays-full} below, which is a more general version of \Cref{thm:Convergence-stochastic-delays-intro} from the introduction. \Cref{thm:Convergence-stochastic-delays-full} involves a sequence of arbitrary parameters $\eps_{1},\eps_{2},\ldots$ that appear naturally in our analysis. The values of these parameters can be chosen situationally to obtain different result. In \Cref{sec:Parameter-choice-stochastic}, we select (i) the values that give the weakest conditions on delays, and (ii) the values that give the largest allowable step size to obtain the two parts of \Cref{thm:Convergence-stochastic-delays-intro} from the  introduction.

\begin{defn}[Summable sequence]
Let $a=\p{a_1,a_2,\ldots}$ ($a_i\in\RR, \forall i$) be a sequence. $a$ is said to be \textbf{summable} or ``in $\ell^1$'' if its $\ell^1$ norm is finite, that is,
\begin{align*}
\n{a}_{\ell^1} &= \sum^\infty_{i=1} \abs{a_i}<\infty.
\end{align*}
\end{defn}

\begin{thm}[Convergence under stochastic delays] \label{thm:Convergence-stochastic-delays-full}
Consider ARock under the following conditions:

1. The block sequence $i(k)$ is a uniform IID block sequence (\Cref{asmp:Block-sequence}).

2. The  sequence of delay vectors $\vec{j}(k)$ is an evenly old, IID sequence that is independent of the sequence $i(k)$ (\Cref{asmp:Stochastic-delays}).

3. Let $\eps_{1},\,\eps_{2},\ldots\in(0,\infty)$ be an arbitrary sequence of parameters such that $\sum_{i=1}^\infty \frac{1}{\eps_{i}}<\infty$ and $\sum_{l=1}^{\infty}\eps_{l}P_{l}l<\infty$
for $P_{l}=\PP\sp{j\p{k}\geq l}$ (\Cref{asmp:Coefficient-formula-stochastic}).

4. The step size is chosen as $\eta^{k}=ch$ for an arbitrary fixed $c\in\p{0,1}$
and $h=\p{1+\frac{1}{m}\sum_{l=1}^{\infty}\eps_{l}P_{l}+\n{ \frac{1}{\eps_{i}}} _{\ell^{1}}}^{-1}$.

Then  with probability $1$, the sequence of ARock iterates converges weakly to a solution.

\end{thm}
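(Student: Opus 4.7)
The plan is to build a Lyapunov function that is adapted to the delay distribution, of the form
\begin{equation*}
\xi^k \;=\; \|x^k - x^*\|^2 \;+\; \sum_{i=1}^{\infty} c_i \,\|x^{k-i+1} - x^{k-i}\|^2,
\end{equation*}
and show it is a (conditional) nonnegative supermartingale whose drift dominates a positive multiple of $\|S\hx^k\|^2$. This mirrors the bounded-delay Lyapunov proof of ARock but extends the history tail to infinity; the coefficients $c_i$ will be chosen as suitable partial tail sums of $P_l$ and $\eps_l$ so that the summability hypothesis $\sum \eps_l P_l l < \infty$ exactly ensures $\sum_i c_i < \infty$ and $\xi^k$ is finite.

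First I would condition on the natural $\sigma$-algebra $\cF^k$ generated by $x^0,\dots,x^k$ and $\hx^k$. Because $i(k)$ is uniform IID and independent of $\hx^k$, the ARock update gives
\begin{equation*}
\EE\!\sp{\|x^{k+1}-x^*\|^2 \mid \cF^k} \;=\; \|x^k-x^*\|^2 \;-\; \tfrac{2\eta^k}{m}\dotp{x^k-x^*,\,S\hx^k} \;+\; \tfrac{(\eta^k)^2}{m}\|S\hx^k\|^2.
\end{equation*}
I split $\dotp{x^k-x^*,S\hx^k} = \dotp{\hx^k - x^*,S\hx^k} + \dotp{x^k-\hx^k,S\hx^k}$; the first term is at least $\tfrac12\|S\hx^k\|^2$ by $\tfrac12$-cocoercivity of $S = I-T$. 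For the second, I write $x^k - \hx^k$ as a telescoping sum of past block increments $x^{k-i+1}-x^{k-i}$ indexed by how many iterates each block is behind, then apply Young's inequality $2|\dotp{a,b}|\leq \eps_l\|a\|^2 + \eps_l^{-1}\|b\|^2$ with the parameter $\eps_l$ attached to the $l$-step-old contribution. Taking the further expectation over $\vec j(k)$, which is independent of the block sequence, replaces the indicator ``block is $l$ steps old'' by the probabilities that give $P_l$.

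With this one-step bound in hand, the coefficients $c_i$ are calibrated so that the telescoping drop $\EE[\xi^{k+1}-\xi^k \mid \cF^k]$ absorbs every past-increment residual. Concretely, $c_i$ becomes a tail sum involving $\eps_l P_l$, the term $\|x^{k+1}-x^k\|^2 = \|\eta^k S_{i(k)}\hx^k\|^2$ contributes $(\eta^k)^2\|S\hx^k\|^2/m$ to the Lyapunov head, and the step size condition $h = \bigl(1 + \tfrac{1}{m}\sum \eps_l P_l + \|\eps_i^{-1}\|_{\ell^1}\bigr)^{-1}$ is exactly what keeps the coefficient of $\|S\hx^k\|^2$ strictly negative. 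Then $\xi^k$ is a nonnegative supermartingale, so by the Robbins--Siegmund lemma, $\xi^k$ converges almost surely and $\sum_k \EE\|S\hx^k\|^2 < \infty$, hence $\|S\hx^k\|\to 0$ almost surely. The evenly-old property of \Cref{def:Evenly-old} propagates this to $\|Sx^k\|\to 0$: because block ages differ by at most $B$, $\|x^k-\hx^k\|$ is controlled by $B$ recent increments, each of size $O(\eta^k\|S\hx^{k-B}\|)$, which vanishes along with the residual. A standard Opial argument (almost-sure boundedness of $\|x^k-x^*\|$ from $\xi^k$ convergence, plus demiclosedness of $S$ at zero from nonexpansiveness of $T$) then yields weak convergence of $x^k$ to some point in $\Fix(T)$ with probability $1$.

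The main obstacle is the calibration in the third paragraph: designing $c_i$ so that the infinite family of past-increment terms generated by Young's inequality is cancelled precisely by the telescoping of $\xi^k$, while simultaneously producing the stated closed-form step size. The delicate bookkeeping is ensuring that the two summability conditions $\sum \eps_l P_l l < \infty$ and $\sum \eps_l^{-1} < \infty$ arise naturally from finiteness of $\sum_i c_i$ and finiteness of the Young's-inequality coefficient on $\|S\hx^k\|^2$, respectively, and that the $1/\sqrt{m}$ improvement visible in \Cref{thm:Convergence-stochastic-delays-intro} can be recovered from this general $1/m$ estimate by a subsequent optimization over $\eps_l$ (carried out in the parameter-choice section). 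The rest of the argument is a fairly standard supermartingale-plus-demiclosedness routine once the right Lyapunov function is in place.
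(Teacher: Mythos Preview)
Your overall strategy---Lyapunov function with tail coefficients $c_i=\sum_{l\ge i}\eps_lP_l$, conditional one-step bound via cocoercivity and Young's inequality, averaging over the delay law to bring in $P_l$, then Robbins--Siegmund---matches the paper exactly. The gap is in the endgame, where you pass from the supermartingale output to the two ingredients needed for the Opial/demiclosedness step.

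First, your use of the evenly-old property is incorrect. Evenly old means $|j(k,i)-j(k,l)|\le B$ for all blocks $i,l$; it does \emph{not} bound $j(k)$ itself. Hence $\|x^k-\hx^k\|$ is a telescoping sum of up to $j(k)$ increments, not $B$ increments, and $j(k)$ is unbounded. The paper does not compare $x^k$ with $\hx^k$ directly. Instead it writes $\hx^k=x^{(k-j(k))+\vec t(k)}$ where the ``pattern'' $\vec t(k)\in\{0,\dots,B\}^m$ has only finitely many possible values. From the supermartingale conclusion $\sum_k\sum_{\vec j}p(\vec j)\|Sx^{k-\vec j}\|^2<\infty$ one regroups by pattern to obtain $\|Sx^{k+\vec t}\|\to0$ for each fixed feasible $\vec t$. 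Then $\|Sx^k\|\le 2\|x^{k+\vec t}-x^k\|+\|Sx^{k+\vec t}\|$, and \emph{this} difference involves only $B$ increments, each tending to zero.

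Second, and related: after you average over the delay, the drift term you control almost surely is $\sum_k\EE[\|S\hx^k\|^2\mid\cG^k]=\sum_k\sum_{\vec j}p(\vec j)\|Sx^{k-\vec j}\|^2$, not $\sum_k\|S\hx^k\|^2$ for the realized $\hx^k$. Concluding $\|S\hx^k\|\to0$ a.s.\ from this is not immediate; the paper needs both the finite-pattern decomposition above and a Borel--Cantelli argument (using summability of $P_l$) to show $k-j(k)\to\infty$.

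Finally, ``boundedness of $\|x^k-x^*\|$'' is not enough for the Opial step; you need $\|x^k-x^*\|$ to \emph{converge} for every fixed point $x^*$. Since $\xi^k$ converges, this reduces to showing the tail $\sum_i c_i\|x^{k+1-i}-x^{k-i}\|^2\to0$, which is where the paper invokes a convolution argument (summable $c_i$ against increments tending to zero) and uses separability to upgrade from ``for each $x^*$'' to ``for all $x^*$ simultaneously''.
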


This theorem is proven in \Cref{sec:sub:sub:Proof-of-theorem-stochastic} after we build up a series of results throughout this section. This section is written in a way that attempts to explain the logic and intuition behind the approach taken. A general strategy for constructing Lyapunov functions is presented in \Cref{sec:General-strategy}. In \Cref{sec:Bounded-delay-stochastic}, we discuss how to modify the proof for the simpler case of bounded delay.

\subsection{Proof outline}
Both convergence proofs rely on the following convergence criterion for fixed-point algorithms (see \cite{BauschkeCombettes2011_convex}):

\begin{prop}[Convergence of nonexpansive fixed-point iterations]\label{prop:Convergence-of-fixed-point-algorithms}
Let $T$ be a nonexpansive operator with at least one fixed point. If we have the following:

\normalfont{\textbf{(1) Norm convergence:}}\footnote{We call this property \textit{norm convergence.} The \textit{distance} of $x^k$ to each fixed-point $x^*$ is what is converging (in general to a nonzero value) and not $x^k$ itself.  \textit{\emph{This property does not appear to have been given a name in the literature, although
it is an important property in convergence proofs.}}} $\n{ x^{k}-x^{*}} $
converges for every $x^{*}\in\text{Fix}\p{T}$, and

\normalfont{\textbf{(2) Fixed-point-residual (FPR) strong convergence:}}\footnote{The \textit{fixed-point residual} (FPR) at $x$ is defined as $(T-I)(x)$} $\|Tx^{k}-x^{k}\|\to0$,
\\then $x^{k}$ weakly converges to some $x^{*}\in\text{Fix}(T)$
\footnote{Weak convergence is the same as regular convergence in $\RR^N$, but differs in a general Hilbert space.}.

\end{prop}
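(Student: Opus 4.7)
The plan is to apply a standard Opial-style argument that combines three ingredients: boundedness (from hypothesis (1)), the demiclosedness principle for nonexpansive operators (to use hypothesis (2)), and Opial's lemma (to upgrade subsequential weak convergence to full weak convergence). First, fix any $x^{*}\in\Fix(T)$. By hypothesis (1), $\n{x^{k}-x^{*}}$ converges, hence is bounded, so $\{x^{k}\}$ is a bounded sequence in $\HH$. By the Banach--Alaoglu / Eberlein--\v{S}mulian theorem, every bounded sequence in a Hilbert space has at least one weak cluster point; call any such point $\bar{x}$, so that $x^{k_{n}}\rightharpoonup\bar{x}$ along some subsequence.

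Next, I would argue $\bar{x}\in\Fix(T)$. Hypothesis (2) gives $(I-T)x^{k_{n}}\to 0$ in norm, so invoking the Browder demiclosedness principle --- which says that for nonexpansive $T$ on a Hilbert space, the graph of $I-T$ is closed under weak/strong convergence --- yields $(I-T)\bar{x}=0$. This is the one non-trivial ingredient and I would either cite it from \cite{BauschkeCombettes2011_convex} or give its short proof: expand $\n{\bar{x}-T\bar{x}}^{2}=\dotp{\bar{x}-T\bar{x},\bar{x}-x^{k_{n}}}+\dotp{\bar{x}-T\bar{x},x^{k_{n}}-Tx^{k_{n}}}+\dotp{\bar{x}-T\bar{x},Tx^{k_{n}}-T\bar{x}}$, then use weak convergence on the first term, (2) on the second, and nonexpansiveness together with Cauchy--Schwarz on the third to bound it by $\n{\bar x - T\bar x}\,\n{x^{k_n}-\bar x}$; a standard limsup argument (or the fact that $\n{x^{k_n}-\bar x}$ is eventually bounded while the other factors vanish) forces $\n{\bar{x}-T\bar{x}}=0$.

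Finally, I would show the weak cluster point is unique, which then promotes subsequential weak convergence to convergence of the full sequence. Suppose $\bar{x}_{1}$ and $\bar{x}_{2}$ are two weak cluster points, both in $\Fix(T)$ by the previous step. By (1), both $\n{x^{k}-\bar{x}_{1}}^{2}$ and $\n{x^{k}-\bar{x}_{2}}^{2}$ converge as $k\to\infty$, so their difference converges:
\[
\n{x^{k}-\bar{x}_{1}}^{2}-\n{x^{k}-\bar{x}_{2}}^{2}=-2\dotp{x^{k},\bar{x}_{1}-\bar{x}_{2}}+\n{\bar{x}_{1}}^{2}-\n{\bar{x}_{2}}^{2}.
\]
Hence $\dotp{x^{k},\bar{x}_{1}-\bar{x}_{2}}$ converges to some $\ell\in\RR$. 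Passing to the two subsequences that weakly converge to $\bar{x}_{1}$ and $\bar{x}_{2}$ respectively gives $\ell=\dotp{\bar{x}_{1},\bar{x}_{1}-\bar{x}_{2}}$ and $\ell=\dotp{\bar{x}_{2},\bar{x}_{1}-\bar{x}_{2}}$, whose difference is $\n{\bar{x}_{1}-\bar{x}_{2}}^{2}=0$; therefore $\bar{x}_{1}=\bar{x}_{2}$.

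The main conceptual obstacle is the demiclosedness principle in step two: in a general Banach space, weak limits of points with vanishing fixed-point residual need not be fixed points, so this step is what genuinely uses the Hilbert-space structure (through the inner-product expansion above). Once demiclosedness is in hand, steps one and three are essentially boundedness plus a slick application of the parallelogram-style expansion that underlies Opial's lemma, and the proof is complete.
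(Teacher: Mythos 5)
Your overall route---boundedness from (1), extraction of a weak cluster point, Browder's demiclosedness principle to place the cluster point in $\Fix(T)$, and the Opial-style uniqueness argument using (1) to upgrade subsequential weak convergence to weak convergence of the whole sequence---is exactly the standard argument; the paper gives no proof of this proposition and simply points to \cite{BauschkeCombettes2011_convex}, where this is how it is proved. Your uniqueness-of-cluster-points step is complete and correct as written.

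The one genuine problem is your inline sketch of demiclosedness. In the decomposition $\n{\bar x-T\bar x}^{2}=\dotp{\bar x-T\bar x,\bar x-x^{k_n}}+\dotp{\bar x-T\bar x,x^{k_n}-Tx^{k_n}}+\dotp{\bar x-T\bar x,Tx^{k_n}-T\bar x}$, the first two terms do vanish, but the third is only bounded by $\n{\bar x-T\bar x}\,\n{x^{k_n}-\bar x}$, and $\n{x^{k_n}-\bar x}$ does \emph{not} tend to $0$ (you only have weak convergence of $x^{k_n}$ to $\bar x$); the limsup argument therefore yields only $\n{\bar x-T\bar x}\le\limsup_{n}\n{x^{k_n}-\bar x}$, not $\bar x=T\bar x$. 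The standard repair is to expand $\n{x^{k_n}-T\bar x}^{2}$ in two ways:
\begin{align}
\n{x^{k_n}-T\bar x}^{2}&=\n{x^{k_n}-\bar x}^{2}+2\dotp{x^{k_n}-\bar x,\bar x-T\bar x}+\n{\bar x-T\bar x}^{2},\\
\n{x^{k_n}-T\bar x}^{2}&\le\n{x^{k_n}-Tx^{k_n}}^{2}+2\n{x^{k_n}-Tx^{k_n}}\n{Tx^{k_n}-T\bar x}+\n{x^{k_n}-\bar x}^{2},
\end{align}
where the second line uses $x^{k_n}-T\bar x=(x^{k_n}-Tx^{k_n})+(Tx^{k_n}-T\bar x)$ and nonexpansiveness. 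Comparing the two, the $\n{x^{k_n}-\bar x}^{2}$ terms cancel, the cross term $\dotp{x^{k_n}-\bar x,\bar x-T\bar x}$ tends to $0$ by weak convergence, and the remaining terms on the right vanish by hypothesis (2) together with boundedness, giving $\n{\bar x-T\bar x}^{2}\le 0$. Since you also offered simply to cite the demiclosedness principle from \cite{BauschkeCombettes2011_convex}, your proof goes through on that alternative; just do not rely on the sketched expansion as written.
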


\Cref{prop:Convergence-of-fixed-point-algorithms} is the basis of our convergence proofs in this paper, as well the proof of convergence of KM iteration. Toward applying  \Cref{prop:Convergence-of-fixed-point-algorithms}, we study the following:
\begin{enumerate}
\item \textbf{Building a Lyapunov function:\footnote{Technically this is not a Lyapunov function, but it resembles one.}} It turns out to be more natural
to look at the Lyapunov function:
\begin{align}
\underbrace{\xi^{k}}_{\text{Total error}} &= \underbrace{\n{ x^{k}-x^{*}} ^{2}}_{\text{Classical error}}+\underbrace{\frac{1}{m}\sum_{i=1}^{\infty}c_{i}\n{ x^{k+1-i}-x^{k-i}} ^{2}}_{\text{Asynchronicity error}}
\end{align}
rather than the classical error $\n{ x^{k}-x^{*}} ^{2}$ alone. Here, we let $x^n=x^0$ form $n<0$. We cannot ensure that $\EE\sp{\n{ x^{k+1}-x^{*}} ^{2}}<\n{ x^{k}-x^{*}} ^{2}$ due to asynchronicity, and generally some kind of monotonicity result is needed to prove convergence. However adding what we might call the \textbf{asynchronicity error}, we regain this monotonicity of expectation, which leads to a viable proof.

\item \textbf{Martingale convergence theory}: This allows us to prove \emph{norm
convergence} and \emph{FPR strong convergence} using results on the above Lyapunov function, which will complete the proof. Martingale theory is what allowed the authors in \cite{PengXuYanYin2015_arock} to prove that $x^k$ converges to a solution for minimization of a convex function with Lipschitz gradient, and not just that the function value converged to the optimal value.
\end{enumerate}

\subsection{Preliminary results} \label{sec:Preliminary-results}
Recall that stochastic unbounded delays are analyzed under \Cref{asmp:Block-sequence,asmp:Stochastic-delays}. Define, for $k=0,1,\ldots,$ the filtration
\begin{align}
\cF^{k} & = \sigma\p{x^{0},x^{1},\ldots,x^{k},\vec{j}\p{0},\vec{j}\p{1},\ldots,\vec{j}\p{k}},
\end{align}
which represents the history of iterates and delays up to the present step $k$.
Let $x^*$ be any solution, and set $x^*=0$ with no loss in generality, to make some notation more compact. This can be achieved by translating the origin of the coordinate system to $x^*$. Thence, $\n{ x^{k}} $ is the distance from the solution\footnote{We will use an abuse of notation in this paper. We equate $S_i(x)\in\HH_i$ (the components of $S(x)$ in the $i$th block) and $(0,\ldots,0,S_i(x),0,\ldots,0)\in\HH_1\times\ldots\times\HH_m$ (the projection of $S(x)$ to the $i$'th subspace). Hence we can write the ARock iteration more compactly as $x^{k+1}=x^{k}-\eta^{k}S_{i\p{k}}\hx^{k}$.}:
\begin{align}
\EE\sp{\n{ x^{k+1}} ^{2}|\cF^{k}} & = \EE\sp{\n{ x^{k}-\eta^{k}S_{i\p{k}}\hx^{k}} ^{2}|\cF^{k}}\\
 & = \n{ x^{k}} ^{2}+\EE\sp{-2\eta^{k}\dotp{ x^{k},S_{i\p{k}}\hx^{k}} +\p{\eta^{k}}^{2}\n{ S_{i\p{k}}\hx^{k}} ^{2}|\cF^{k}},
\end{align}
where the expectation is taken over only the block index $i\p{k}$ only. Since the step size $\eta^{k}$ is chosen independently of $i\p{k}$ and \Cref{asmp:Block-sequence,asmp:Stochastic-delays} hold, we obtain
\begin{align}\label{eq:Starting-point}
\EE\sp{\n{ x^{k+1}} ^{2}|\cF^{k}} & = \n{ x^{k}} ^{2}\underbrace{-2\frac{\eta^{k}}{m}\dotp{ x^{k},S\hx^{k}} }_{\text{cross term}}+\frac{\p{\eta^{k}}^{2}}{m}\n{ S\hx^{k}}^{2}.
\end{align}

\subsubsection{A fundamental inequality}
We start with a fundamental inequality, which is the starting point for analyzing convergence.

\begin{prop}[Fundamental inequality]\label{prop:Fundamental-inequality}
Under  \Cref{asmp:Block-sequence,asmp:Stochastic-delays}, for $j\p{k}$ defined in \eqref{eq:def:current-delay}, and an arbitrary sequence $\eps_{1},\,\eps_{2},\ldots\in(0,\infty)$, the ARock iterates obey the following inequality:
\begin{align}\label{eq:Fundamental-inequality}
\begin{aligned}
\EE\sp{\n{ x^{k+1}-x^{*}} ^{2}\big|\cF^{k}} &\leq \n{ x^{k}-x^{*}} ^{2}+\frac{1}{m}\sum_{i=1}^{j\p{k}}\eps_{i}\n{ x^{k+1-i}-x^{k-i}} ^{2}
\quad - \frac{\eta^{k}}{m}\n{ S\hx^{k}} ^{2}\p{1-\eta^{k}\p{1+\sum_{i=1}^{j\p{k}}\frac{1}{\eps_{i}}}}.
\end{aligned}
\end{align}
\end{prop}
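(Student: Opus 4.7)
The plan is to start from the conditional second-moment identity \eqref{eq:Starting-point} and split the cross term by decomposing $x^k = \hx^k + (x^k - \hx^k)$. The $\hx^k$-piece will be absorbed using cocoercivity of $S$, while the $x^k - \hx^k$ piece will be telescoped block-wise and controlled with Young's inequality, producing the sum $\frac{1}{m}\sum_{i=1}^{j(k)}\eps_i\n{x^{k+1-i}-x^{k-i}}^2$ and the extra $\sum 1/\eps_i$ contribution to the $\n{S\hx^k}^2$ coefficient.

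First I would invoke the fact that, since $T$ is nonexpansive, $S = I - T$ is $\tfrac{1}{2}$-cocoercive: $\dotp{Sx - Sy, x-y}\ge \tfrac12\n{Sx-Sy}^2$. Plugging in $y=x^*=0$ so that $Sy=0$ yields $\dotp{\hx^k,S\hx^k}\ge \tfrac12\n{S\hx^k}^2$, hence
\begin{align}
-2\tfrac{\eta^k}{m}\dotp{\hx^k,S\hx^k} \le -\tfrac{\eta^k}{m}\n{S\hx^k}^2.
\end{align}
This will account for the negative $-\frac{\eta^k}{m}\n{S\hx^k}^2$ in the target inequality.

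Next I would handle the remaining cross term $-2\tfrac{\eta^k}{m}\dotp{x^k-\hx^k,S\hx^k}$. Blockwise, $x^k_i - \hx^k_i = \sum_{l=1}^{j(k,i)}(x^{k-l+1}_i - x^{k-l}_i)$, so defining $e^{k,l}$ to equal the $i$-th block of $x^{k-l+1}-x^{k-l}$ when $j(k,i)\ge l$ and zero otherwise, we obtain $x^k - \hx^k = \sum_{l=1}^{j(k)} e^{k,l}$. Because ARock updates only a single block per iteration (namely $i(k-l)$), the difference $x^{k-l+1}-x^{k-l}$ is supported on that one block, so $e^{k,l}$ is either the whole of it or zero; in either case $\n{e^{k,l}}\le \n{x^{k-l+1}-x^{k-l}}$. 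Applying Young's inequality $-2\dotp{a,b}\le \eps_l\n{a}^2 + \tfrac{1}{\eps_l}\n{b}^2$ with $a=e^{k,l}$ and $b=\eta^k S\hx^k$ and summing yields
\begin{align}
-2\tfrac{\eta^k}{m}\dotp{x^k-\hx^k,S\hx^k}
\le \tfrac{1}{m}\sum_{l=1}^{j(k)}\eps_l\n{x^{k-l+1}-x^{k-l}}^2
+ \tfrac{(\eta^k)^2}{m}\n{S\hx^k}^2\sum_{l=1}^{j(k)}\tfrac{1}{\eps_l}.
\end{align}

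Finally I would combine the two bounds with the $\tfrac{(\eta^k)^2}{m}\n{S\hx^k}^2$ term already present in \eqref{eq:Starting-point} and regroup the $\n{S\hx^k}^2$ coefficient as $-\tfrac{\eta^k}{m}\bigl(1 - \eta^k(1 + \sum_{l=1}^{j(k)} 1/\eps_l)\bigr)$, which reproduces the stated inequality (after reindexing $l\to i$). The only genuine obstacle is the bookkeeping in the telescoping step: one has to define the block-restricted increments $e^{k,l}$ correctly so that every pairing in $\dotp{x^k-\hx^k,S\hx^k}$ is distributed exactly once across the $j(k)$ summands, and then exploit the single-block-update property of ARock to pass from $\n{e^{k,l}}$ to $\n{x^{k-l+1}-x^{k-l}}$ without paying an additional factor. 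Everything else is cocoercivity plus Young's inequality applied with the correct scaling of $\eta^k$ inside the second argument.
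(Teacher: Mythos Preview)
Your proposal is correct and follows essentially the same route as the paper: split $x^k=\hx^k+(x^k-\hx^k)$, use the $\tfrac12$-cocoercivity of $S$ (the paper phrases it as firm nonexpansiveness of $\tfrac12 S$) on the first piece, telescope the second piece, and apply Young's inequality with weights $\eps_i$. The only cosmetic difference is that the paper carries out the telescoping and Young step coordinate-block by coordinate-block and then replaces each $j(k,l)$ by $j(k)$, whereas you bundle the increments into the masked vectors $e^{k,l}$ and bound $\n{e^{k,l}}\le\n{x^{k-l+1}-x^{k-l}}$ in one shot; both bookkeeping choices lead to the same inequality.
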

The $\eps_{i}$ sequence  is tunable. In \cite{PengXuYanYin2015_arock}, they are set to a constant value. However we eventually set them so
that $1/\eps_{i}$ is summable, which is fundamental to the convergence proof for unbounded delays.

\begin{proof}
Let us start with the cross term in \eqref{eq:Starting-point}. Since $T$ is nonexpansive,  $\frac{1}{2}S$ is firmly nonexpansive (FNE)\footnote{A firmly nonexpansive (FNE) operator $Q:\HH\to\HH$ is an operator that can be written as $Q=\frac{1}{2}I+\frac{1}{2}R$, where $R$ is nonexpansive. Equivalently, FNE operators satisfy $\dotp{Qy-Qx,y-x} \geq \n{Qy-Qx}^2,~ \forall x,y\in\HH$.}. Hence,
\begin{align*}
-\frac{2\eta^{k}}{m}\dotp{ S\hx^{k},x^{k}} & =  -\frac{2\eta^{k}}{m}\p{\dotp{S\hx^{k},\hx^{k}} + \dotp{ S\hx^{k},x^{k}-\hx^{k}}}\\
& =  -\frac{2\eta^{k}}{m}\p{2\dotp{\frac{1}{2}S\hx^{k},\hx^{k}}+\dotp{ S\hx^{k},x^{k}-\hx^{k}}}\\
\p{\frac{1}{2}S~\text{is FNE}} & \leq -\frac{2\eta^{k}}{m}\p{2\n{ \frac{1}{2}S\hx^{k}} ^{2} + \dotp{ S\hx^{k},x^{k}-\hx^{k}}}\\
& =  -\frac{\eta^{k}}{m}\n{ S\hx^{k}} ^{2} - \frac{2\eta^{k}}{m}\dotp{ S\hx^{k},x^{k}-\hx^{k}}\\
\text{(break into coordinate blocks)} & =  \sum_{l=1}^{m}\p{-\frac{\eta^{k}}{m}\n{ S_{l}\hx^{k}} ^{2}-\frac{2\eta^{k}}{m} \dotp{S_{l}\hx^k, x_{l}^{k}-x_l^{k-j\p{k,l}}}}.
\end{align*}
Take block $l$. We turn the inner product into a telescoping sum:
\begin{align*}
&\quad -\frac{\eta^{k}}{m}\n{ S_{l}\hx^{k}} ^{2}-\frac{2\eta^{k}}{m}\dotp{ S_{l}\hx^k,x_{l}^{k}-x_l^{k-j\p{k,l}}}\\
&= -\frac{\eta^{k}}{m}\n{ S_{l}\hx^{k}} ^{2}-\frac{2\eta^{k}}{m}\p{\sum_{i=1}^{j\p{k,l}}\dotp{ S_{l}\hx^{k},x_{l}^{k+1-i}-x_{l}^{k-i}}}\\
\text{(Cauchy-Schwarz)}  &\leq -\frac{\eta^{k}}{m}\n{ S_{l}\hx^{k}} ^{2}+\frac{2\eta^{k}}{m}\p{\sum_{i=1}^{j\p{k,l}}\frac{1}{2}\p{\n{ S_{l}\hx^{k}} ^{2}\frac{\eta^{k}}{\eps_{i}}+\frac{\eps_{i}}{\eta^{k}}\n{ x_{l}^{k+1-i}-x_{l}^{k-i}} ^{2}}}\\
&\leq -\frac{\eta^{k}}{m}\n{ S_{l}\hx^{k}} ^{2}+\frac{\eta^{k}}{m}\p{\sum_{i=1}^{j\p{k}}\p{\n{ S_{l}\hx^{k}} ^{2}\frac{\eta^{k}}{\eps_{i}}+\frac{\eps_{i}}{\eta^{k}}\n{ x_{l}^{k+1-i}-x_{l}^{k-i}} ^{2}}}\\
&= \quad \frac{\eta^{k}}{m}\n{ S_{l}\hx^{k}} ^{2}\p{\eta^{k}\p{\sum_{i=1}^{j\p{k}}\frac{1}{\eps_{i}}}-1}+\frac{1}{m}\sum_{i=1}^{j\p{k}}\eps_{i}\n{ x_{l}^{k+1-i}-x_{l}^{k-i}} ^{2}.
\end{align*}
Adding all the components back together, we have:
\begin{align*}
-2\frac{\eta^{k}}{m}\dotp{ x^{k},S\hx^{k}} +\frac{\p{\eta^{k}}^{2}}{m}\n{ S\hx^{k}} ^{2} &\leq \frac{\eta^{k}}{m}\n{ S\hx^{k}} ^{2}\p{\eta^{k}\p{1+\p{\sum_{i=1}^{j\p{k}}\frac{1}{\eps_{i}}}}-1}+\frac{1}{m}\sum_{i=1}^{j\p{k}}\eps_{i}\n{ x^{k+1-i}-x^{k-i}} ^{2}.
\end{align*}
Hence the proposition follows by adding $\n{x^k}^2$ to each side, and using \eqref{eq:Starting-point}.
\end{proof}

\subsection{Building a Lyapunov function}
In this section we demonstrate how to construct a Lyapunov function from \eqref{eq:Fundamental-inequality} to prove convergence.

When calculating $\EE\sp{\n{ x^{k+1}-x^*} ^{2}\big|\cF^{k}}$, notice that we obtained some difference terms of the form $\n{ x^{k+1-i}-x^{k-i}} ^{2}$. These difference terms are not easy to compare with $\n{ x^{k}-x^*} ^{2}$, and hence we cannot immediately say anything about the growth of the error. Instead of just considering $\n{ x^{k}-x^*} ^{2}$, we consider a Lyapunov function $\xi^{k}$ defined as follows:

\begin{defn}[The Lyapunov function] \label{def:Lyapunov-definition}
Let $x^0, x^1, x^2, \ldots$ be a sequence of points in $\HH$, and let $c_1, c_2, c_3, \ldots$ be a sequence of parameters in $[0,\infty)$. Set $x^n=x^0$ for all $n<0$. We define the Lyapunov function:
\begin{align}\label{eq:def:Lyapunov}
\xi^{k} &= \n{ x^{k}-x^{*}} ^{2}+\frac{1}{m}\sum_{i=1}^{\infty}c_{i}\n{ x^{k+1-i}-x^{k-i}} ^{2}.
\end{align}
\end{defn}
This is simply a linear combination of all the terms found when calculating the expectation of the original error. It is similar to, but different from, that used in \cite{PengXuYanYin2015_arock}. When we calculate $\EE\sp{\xi^{k+1}\big|\cF^{k}}$, we hope to only have terms similar to the terms found in $\xi^{k}$: that is, only terms like $\n{x^k-x^*}^2$ and $\n{x^{k+1-i}-x^{k-i}}$, and not some third species of terms. If this is the case, then we may carefully chose the coefficients $c_{1},c_{2},\ldots$ so that we may compare $\xi^{k}$ and $\xi^{k+1}$ in a meaningful way. Information about how \textit{some kind of error} grows is essential to convergence proofs.

\subsubsection{Analysis of the Lyapunov function}
We now analyze the conditional expectation of the Lyapunov function defined in \eqref{eq:def:Lyapunov}.

\begin{lem}[Branch point lemma] \label{lem:Branch-point-lemma}
Take arbitrary $\eps_{1},\,\eps_{2},\ldots\in(0,\infty)$. Under  \Cref{asmp:Block-sequence,asmp:Stochastic-delays},
 the ARock iterates and $\xi^k$ defined in \eqref{eq:def:Lyapunov} satisfy the following inequality:
\begin{align}
\begin{aligned}
\EE\sp{\xi^{k+1}\big|\cF^{k}} & \leq \n{ x^{k}} ^{2}+\frac{1}{m}\p{\sum_{i=1}^{j\p{k}}\eps_{i}\n{ x^{k+1-i}-x^{k-i}} ^{2}+\sum_{i=1}^{\infty}c_{i+1}\n{ x^{k+1-i}-x^{k-i}} ^{2}}\\
&\quad -\frac{\eta^{k}}{m}\n{ Sx^{k-\vec{j}\p{k}}} ^{2}\p{1-\eta^{k}\p{1+\frac{c_{1}}{m}+\sum_{i=1}^{j\p{k}}\frac{1}{\eps_{i}}}}.
\end{aligned}
\end{align}
\end{lem}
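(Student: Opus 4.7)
The plan is to take conditional expectation $\EE[\xi^{k+1}\mid \cF^k]$ term by term and fit everything into the shape promised by the lemma. Write $\xi^{k+1}$ as its ``classical error'' piece $\n{x^{k+1}-x^*}^2$ plus the asynchronicity sum $\frac{1}{m}\sum_{i=1}^{\infty}c_i\n{x^{k+2-i}-x^{k+1-i}}^2$. For the classical piece, I apply the Fundamental Inequality (Proposition 1) directly: it already produces exactly the first $\sum_{i=1}^{j(k)}\eps_i\n{x^{k+1-i}-x^{k-i}}^2$ term and the $-\frac{\eta^k}{m}\n{S\hat x^k}^2(1-\eta^k(1+\sum \eps_i^{-1}))$ term that appear in the statement. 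So the work to do is to extract the remaining two quantities (the $c_{i+1}$ sum and the extra $c_1/m$ correction inside the parenthesis) from the asynchronicity piece.

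Next I reindex the asynchronicity sum by setting $i' = i-1$, which gives
\begin{align*}
\sum_{i=1}^{\infty} c_i \n{x^{k+2-i}-x^{k+1-i}}^2 &= c_1\n{x^{k+1}-x^k}^2 + \sum_{i=1}^{\infty} c_{i+1}\n{x^{k+1-i}-x^{k-i}}^2.
\end{align*}
The second sum here is $\cF^k$-measurable (all differences involve iterates of index at most $k$), so it passes unchanged through the conditional expectation and contributes the ``$\sum_{i=1}^{\infty}c_{i+1}\n{x^{k+1-i}-x^{k-i}}^2$'' term of the lemma. The only piece that still needs the expectation is the fresh difference $c_1\n{x^{k+1}-x^k}^2$.

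To handle this fresh term, I use the ARock update: $x^{k+1}-x^k$ is zero on every block except $i(k)$, where it equals $-\eta^k S_{i(k)}\hat x^k$, so $\n{x^{k+1}-x^k}^2 = (\eta^k)^2 \n{S_{i(k)}\hat x^k}^2$. Because $\eta^k$ is chosen independently of $i(k)$ and $i(k)$ is uniform on $\{1,\dots,m\}$ with $\hat x^k$ being $\cF^k$-measurable (Assumptions 1--2),
\begin{align*}
\EE\bigl[c_1 \n{x^{k+1}-x^k}^2 \,\big|\, \cF^k\bigr] = \frac{c_1 (\eta^k)^2}{m}\n{S\hat x^k}^2.
\end{align*}
Dividing by the $\tfrac{1}{m}$ out front, this contributes $\tfrac{c_1 (\eta^k)^2}{m^2}\n{S\hat x^k}^2$ to $\EE[\xi^{k+1}\mid \cF^k]$.

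Finally I collect all $\n{S\hat x^k}^2 = \n{Sx^{k-\vec j(k)}}^2$ contributions:
\begin{align*}
-\frac{\eta^k}{m}\n{S\hat x^k}^2\Bigl(1-\eta^k\bigl(1+\sum_{i=1}^{j(k)}\tfrac{1}{\eps_i}\bigr)\Bigr) + \frac{c_1(\eta^k)^2}{m^2}\n{S\hat x^k}^2 = -\frac{\eta^k}{m}\n{S\hat x^k}^2\Bigl(1-\eta^k\bigl(1+\tfrac{c_1}{m}+\sum_{i=1}^{j(k)}\tfrac{1}{\eps_i}\bigr)\Bigr),
\end{align*}
which is exactly the coefficient written in the lemma. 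Combining the three contributions completes the inequality. The only nontrivial accounting step is the reindexing together with isolating the single ``new'' difference $\n{x^{k+1}-x^k}^2$, which is where the $c_1/m$ correction inside the parenthesis comes from; everything else is algebra plus Proposition 1.
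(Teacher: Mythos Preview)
Your proof is correct and follows essentially the same approach as the paper: split $\xi^{k+1}$ into the classical error (handled by Proposition~\ref{prop:Fundamental-inequality}), the fresh difference $\frac{c_1}{m}\n{x^{k+1}-x^k}^2$ (whose conditional expectation gives the $c_1/m$ correction), and the reindexed tail $\frac{1}{m}\sum_{i\ge 1}c_{i+1}\n{x^{k+1-i}-x^{k-i}}^2$ (which is $\cF^k$-measurable). The paper presents the same three pieces as terms $A$, $B$, $C$ and combines them identically.
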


\begin{proof}
Calculate the expectation:
\begin{align}
\EE\sp{\xi^{k+1}\big|\cF^{k}} & = \EE\sp{\n{ x^{k+1}} ^{2}\big|\cF^{k}}+\frac{c_{1}}{m}\EE\sp{\n{ x^{k+1}-x^{k}} ^{2}\big|\cF^{k}}+\frac{1}{m}\sum_{i=1}^{\infty}c_{i+1}\n{ x^{k+1-i}-x^{k-i}} ^{2}.
\end{align}
The second term yields (by the definition of ARock iteration \eqref{eq:def:ARock}, and taking expectation over $i(k)$)
\begin{align}
\EE\sp{\n{ x^{k+1}-x^{k}} ^{2}\big|\cF^{k}} &= \frac{\p{\eta^{k}}^{2}}{m}\n{ S(x^{k-j\p{k}})} ^{2}.\label{eq:Difference-term}
\end{align}
Then,
\begin{align*}
\EE\sp{\xi^{k+1}\big|\cF^{k}} & = \underbrace{\EE\sp{\n{ x^{k+1}} ^{2}\big|\cF^{k}}}_{A}+\underbrace{\frac{c_{1}}{m}\EE\sp{\n{ x^{k+1}-x^{k}} ^{2}\big|\cF^{k}}}_{B}+\underbrace{\frac{1}{m}\sum_{i=1}^{\infty}c_{i+1}\n{ x^{k+1-i}-x^{k-i}} ^{2}}_{C}\\
& \leq \underbrace{\n{ x^{k}} ^{2}+\frac{\eta^{k}}{m}\n{ Sx^{k-\vec{j}\p{k}}} ^{2}\p{\eta^{k}\p{1+\sum_{i=1}^{j\p{k}}\frac{1}{\eps_{i}}}-1}+\frac{1}{m}\sum_{i=1}^{j}\eps_{i}\n{ x^{k+1-i}-x^{k-i}} ^{2}}_{A}\text{ (by \eqref{eq:Fundamental-inequality})}\\
  & \quad +\underbrace{\frac{c_{1}}{m}\p{\frac{\p{\eta^{k}}^{2}}{m}\n{ Sx^{k-\vec{j}\p{k}}} ^{2}}}_{B}\text{ (by \eqref{eq:Difference-term})}\\
  & \quad \underbrace{+\frac{1}{m}\sum_{i=1}^{\infty}c_{i+1}\n{ x^{k+1-i}-x^{k-i}} ^{2}}_{C}\\
 &= \n{ x^{k}} ^{2}+\frac{1}{m}\p{\sum_{i=1}^{j\p{k}}\eps_{i}\n{ x^{k+1-i}-x^{k-i}} ^{2}+\sum_{i=1}^{\infty}c_{i+1}\n{ x^{k+1-i}-x^{k-i}} ^{2}}\\
  & \quad -\frac{\eta^{k}}{m}\n{ Sx^{k-\vec{j}\p{k}}} ^{2}\p{1-\eta^{k}\p{1+\frac{c_{1}}{m}+\sum_{i=1}^{j\p{k}}\frac{1}{\eps_{i}}}}.
\end{align*}
\end{proof}
Define
\begin{align}
\cG^{k} = \sigma\p{x^{0},x^{1},\ldots,x^{k}},
\end{align}
which represents the history of iterates $x^0,x^1,x^2,\ldots$.
In the proposition below, we derive the natural choice of parameters of the Lyapunov function that allow a meaningful comparison between $\EE\sp{\xi^{k+1}\big|\
G^{k}}$ and $\xi^{k}$. With this choice, we obtain
\begin{align*}
\EE\sp{\xi^{k+1}\big|\cG^{k}}\leq\xi^{k}-\p{\text{descent terms}},
\end{align*}
which strongly resembles norm convergence: one of the convergence conditions in \Cref{prop:Convergence-of-fixed-point-algorithms}.

We first make some assumptions on the parameters. The necessity of these assumptions will become clear in the proof of \Cref{lem:Descent-lemma-stochastic}.
\begin{asmp}[Coefficient summability conditions] \label{asmp:Coefficient-formula-stochastic}
Let $\eps_{1}, \eps_{2},\ldots\in(0,\infty)$ and let $c_{i}=\sum_{l=i}^{\infty}\eps_{l}P_{l}$. These sequences also satisfy the summability conditions:
\begin{align}
\sum_{i=1}^\infty \frac{1}{\eps_i} <& \infty,\label{eq:asmp:One-on-epsilon-l1}\\
\sum_{i=1}^\infty c_i <& \infty.\label{eq:asmp:Coefficients-summable-stochastic}
\end{align}
\end{asmp}

\begin{lem}[Descent lemma for stochastic delays] \label{lem:Descent-lemma-stochastic}
Consider the Lyapunov function $\xi^{k}$ defined in \eqref{eq:def:Lyapunov}. Let \Cref{asmp:Block-sequence,asmp:Stochastic-delays,asmp:Coefficient-formula-stochastic} hold. Let $h=\p{1+\frac{c_{1}}{m}+\n{ \frac{1}{\eps_{i}}} _{\ell^{1}}}^{-1}$. Then, ARock yields the following inequality for step size $\eta^k$:
\begin{align*}
\EE\sp{\xi^{k+1}\big|\cG^{k}} &\leq \xi^{k}-\p{1-\eta^{k}/h}\frac{\eta^{k}}{m}\sum_{\vec{j}\in\NN^{m}}p\p{\vec{j}}\n{ Sx^{k-\vec{j}}} ^{2}.
\end{align*}
\end{lem}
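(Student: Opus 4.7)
The plan is to apply the tower property of conditional expectation, using the Branch point lemma (\Cref{lem:Branch-point-lemma}) as the starting point, and then to integrate out the delay vector $\vec{j}(k)$ using its (known) distribution. Specifically, since $\cG^{k}\subseteq\cF^{k}$ and the delay sequence is IID and independent of the block sequence (\Cref{asmp:Stochastic-delays}), I would write
\begin{align*}
\EE\sp{\xi^{k+1}\big|\cG^{k}} \;=\; \EE\sp{\EE\sp{\xi^{k+1}\big|\cF^{k}}\,\big|\,\cG^{k}},
\end{align*}
and then substitute the right-hand side of the Branch point inequality. All iterate quantities $\|x^{k+1-i}-x^{k-i}\|^{2}$ and $\|x^{k}\|^{2}$ are $\cG^{k}$-measurable, so only the terms involving $j(k)$ and $\hx^{k}=x^{k-\vec{j}(k)}$ need to be integrated against $p(\vec{j})$.

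The key computation is the Fubini/indicator swap for the asynchronicity error. Using $\sum_{i=1}^{j(k)}a_{i} = \sum_{i=1}^{\infty}a_{i}\mathbf{1}_{\{j(k)\geq i\}}$ and $P_{i}=\PP[j(k)\geq i]$, I would get
\begin{align*}
\EE\sp{\sum_{i=1}^{j(k)}\eps_{i}\n{x^{k+1-i}-x^{k-i}}^{2}\,\Big|\,\cG^{k}} = \sum_{i=1}^{\infty}\eps_{i}P_{i}\n{x^{k+1-i}-x^{k-i}}^{2}.
\end{align*}
Adding the term $\sum_{i=1}^{\infty}c_{i+1}\|x^{k+1-i}-x^{k-i}\|^{2}$ already present and using the telescoping identity $\eps_{i}P_{i}+c_{i+1}=c_{i}$ (which is exactly why \Cref{asmp:Coefficient-formula-stochastic} defines $c_{i}=\sum_{l\geq i}\eps_{l}P_{l}$), the two sums collapse to
\begin{align*}
\frac{1}{m}\sum_{i=1}^{\infty}c_{i}\n{x^{k+1-i}-x^{k-i}}^{2},
\end{align*}
so that combined with $\|x^{k}\|^{2}$ the non-descent portion is precisely $\xi^{k}$. (Condition \eqref{eq:asmp:Coefficients-summable-stochastic} ensures this sum is finite and the Lyapunov function is well-defined; condition \eqref{eq:asmp:One-on-epsilon-l1} will be needed in the next step.)

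For the descent term, I would integrate $\vec{j}(k)$ out to get $\sum_{\vec{j}\in\NN^{m}}p(\vec{j})\|Sx^{k-\vec{j}}\|^{2}\bigl(1-\eta^{k}(1+\tfrac{c_{1}}{m}+\sum_{i=1}^{j}\tfrac{1}{\eps_{i}})\bigr)$, and then bound it uniformly. The crucial observation is that for every realization of the delay,
\begin{align*}
1+\tfrac{c_{1}}{m}+\sum_{i=1}^{j}\tfrac{1}{\eps_{i}} \;\leq\; 1+\tfrac{c_{1}}{m}+\n{\tfrac{1}{\eps_{i}}}_{\ell^{1}} \;=\; 1/h,
\end{align*}
so each parenthetical factor $(1-\eta^{k}(\cdot))\geq (1-\eta^{k}/h)$. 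Since $\|Sx^{k-\vec{j}}\|^{2}\geq 0$, multiplying by $-\eta^{k}/m$ and summing against the probabilities $p(\vec{j})$ preserves the inequality and yields exactly the stated descent $-(1-\eta^{k}/h)\tfrac{\eta^{k}}{m}\sum_{\vec{j}}p(\vec{j})\|Sx^{k-\vec{j}}\|^{2}$.

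The main obstacle is not really computational but conceptual: the proof requires recognizing the specific algebraic coincidence $c_{i}-c_{i+1}=\eps_{i}P_{i}$ that makes the two infinite sums of squared differences collapse back into the Lyapunov function unchanged, which is the whole reason the coefficients $c_{i}$ are defined as the tail sums of $\eps_{l}P_{l}$ in \Cref{asmp:Coefficient-formula-stochastic}. Once this telescoping and the uniform bound $\sum_{i=1}^{j}\tfrac{1}{\eps_{i}}\leq\|1/\eps_{i}\|_{\ell^{1}}$ are in hand, everything else is bookkeeping.
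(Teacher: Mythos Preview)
Your proposal is correct and follows essentially the same route as the paper: start from the Branch point lemma, condition down from $\cF^{k}$ to $\cG^{k}$, use the Fubini/indicator swap to turn $\sum_{i=1}^{j(k)}\eps_{i}\|x^{k+1-i}-x^{k-i}\|^{2}$ into $\sum_{i}\eps_{i}P_{i}\|x^{k+1-i}-x^{k-i}\|^{2}$, and then invoke the telescoping identity $\eps_{i}P_{i}+c_{i+1}=c_{i}$ to reconstitute $\xi^{k}$. The only cosmetic difference is the order in which the uniform bound $\sum_{i=1}^{j}\tfrac{1}{\eps_{i}}\leq\|1/\eps_{i}\|_{\ell^{1}}$ is applied: the paper replaces the partial sum by the full $\ell^{1}$ norm \emph{before} integrating out $\vec{j}(k)$ (so the factor $(1-\eta^{k}/h)$ appears immediately and can be pulled outside the $\vec{j}$-sum), whereas you integrate first and then bound each summand; both orderings give the same inequality.
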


\begin{proof}
From \Cref{lem:Branch-point-lemma} and \eqref{eq:asmp:One-on-epsilon-l1}, we have:
\begin{align}
\begin{aligned}
\EE\sp{\xi^{k+1}\big|\cF^{k}} &\leq \n{ x^{k}} ^{2}+\frac{1}{m}\p{\sum_{i=1}^{j\p{k}}\eps_{i}\n{ x^{k+1-i}-x^{k-i}} ^{2}+\sum_{i=1}^{\infty}c_{i+1}\n{ x^{k+1-i}-x^{k-i}} ^{2}}\\
&\quad -\frac{\eta^{k}}{m}\n{ Sx^{k-\vec{j}\p{k}}} ^{2}\p{1-\eta^{k}\underbrace{\p{1+\frac{c_{1}}{m}+\n{ \frac{1}{\eps_{i}}} _{\ell^{1}}}}_{1/h}}.
\end{aligned}
\end{align}
Let $p_{j}=\PP\sp{j\p{k}=j}$. Now take expectations over delays (via taking expectation with respect to $\cG^{k}$ instead of $\cF^{k}$).
\begin{align*}
\EE\sp{\xi^{k+1}\big|\cG^{k}} & \leq \n{ x^{k}} ^{2}+\frac{1}{m}\p{\sum_{j=1}^{\infty}p_{j}\sum_{i=1}^{j}\eps_{i}\n{ x^{k+1-i}-x^{k-i}} ^{2}+\sum_{i=1}^{\infty}c_{i+1}\n{ x^{k+1-i}-x^{k-i}} ^{2}}\\
&\quad -\p{1-\eta^{k}/h}\frac{\eta^{k}}{m}\sum_{\vec{j}\in\NN^{m}}p\p{\vec{j}}\n{ Sx^{k-\vec{j}}} ^{2}\\
&= \n{ x^{k}} ^{2}+\frac{1}{m}\p{\sum_{i=1}^{\infty}\p{\sum_{j=i}^{\infty}p_{j}}\eps_{i}\n{ x^{k+1-i}-x^{k-i}} ^{2}+\sum_{i=1}^{\infty}c_{i+1}\n{ x^{k+1-i}-x^{k-i}} ^{2}}\\
& \quad -\p{1-\eta^{k}/h}\frac{\eta^{k}}{m}\sum_{\vec{j}\in\NN^{m}}p\p{\vec{j}}\n{ Sx^{k-\vec{j}}} ^{2}\\
& = \n{ x^{k}} ^{2}+\frac{1}{m}\p{\sum_{i=1}^{\infty}\p{\eps_{i}P_{i}+c_{i+1}}\n{ x^{k+1-i}-x^{k-i}} ^{2}}-\p{1-\eta^{k}/h}\frac{\eta^{k}}{m}\sum_{\vec{j}\in\NN^{m}}p\p{\vec{j}}\n{ Sx^{k-\vec{j}}} ^{2}.
\end{align*}
Let $\eta^{k}\leq h$ to eliminate the last term. Ideally $\EE\sp{\xi^{k+1}\big|\cG^{k}}\leq\xi^{k}$, which can be achieved with:
\begin{align*}
\n{ x^{k}} ^{2}+\frac{1}{m}\p{\sum_{i=1}^{\infty}\p{\eps_{i}P_{i}+c_{i+1}}\n{ x^{k+1-i}-x^{k-i}} ^{2}} & \leq \n{ x^{k}} ^{2}+\frac{1}{m}\sum_{i=1}^{\infty}c_{i}\n{ x^{k+1-i}-x^{k-i}} ^{2}.
\end{align*}
The obvious choice of coefficients is then given by $c_{i+1} + P_i\eps_i = c_i$. However this doesn't uniquely determine the coefficients. We assume that $c_i\to 0$ as $i$ goes to $\infty$ to ensure that any bounded sequence has a corresponding Lyapunov function that is finite. Hence:
\begin{align}
c_{i} &= \sum_{l=i}^{\infty}\eps_{i}P_{i}.\nonumber
\end{align}
This recovers the coefficient formula from \Cref{asmp:Coefficient-formula-stochastic}. With this choice of coefficients, we have our result.
\end{proof}

\subsection{Convergence proof} \label{sec:sub:Convergence-proof-stochastic}
Now that we have built a Lyapunov function and obtained \Cref{lem:Descent-lemma-stochastic}, we can prove convergence.

\begin{lem}\label{lem:Sum-Sxhat-stochastic}
Let \Cref{asmp:Block-sequence,asmp:Stochastic-delays,asmp:Coefficient-formula-stochastic} hold. Use step size $\eta^{k}=ch$ for some arbitrary fixed $c\in\p{0,1}$,
and $h$ given in \Cref{lem:Descent-lemma-stochastic}. Then with probability $1$, $\xi^{k}$ converges, and in addition,
\begin{align}\label{eq:lem:Sum-Sxhat-stochastic}
\sum_{k=0}^{\infty}\sum_{\vec{j}\in\NN^{m}}p\p{\vec{j}}\n{ Sx^{k-\vec{j}}} ^{2} &< \infty.
\end{align}
\end{lem}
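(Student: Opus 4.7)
The plan is to treat the descent inequality from \Cref{lem:Descent-lemma-stochastic} as a supermartingale-style relation and apply the Robbins--Siegmund almost-supermartingale convergence theorem. With $\eta^k = ch$ for fixed $c \in (0,1)$, the descent lemma reads
\begin{align*}
\EE\sp{\xi^{k+1}\big|\cG^{k}} &\leq \xi^{k}-(1-c)\frac{ch}{m}\sum_{\vec{j}\in\NN^{m}}p\p{\vec{j}}\n{ Sx^{k-\vec{j}}} ^{2},
\end{align*}
so with $Y_k := \xi^k \geq 0$ and $Z_k := \frac{(1-c)ch}{m}\sum_{\vec{j}} p(\vec{j})\|Sx^{k-\vec{j}}\|^2 \geq 0$, the inequality takes the form $\EE[Y_{k+1}\mid \cG^k] \leq Y_k - Z_k$ with no extra additive or multiplicative perturbation terms.

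Before invoking the theorem, I would verify that $\xi^k$ is a well-defined, finite, non-negative random variable at every $k$. Non-negativity is immediate since every $c_i = \sum_{l\ge i}\eps_l P_l \geq 0$. Finiteness follows because of the convention $x^n = x^0$ for $n < 0$: for each fixed $k$, the differences $x^{k+1-i}-x^{k-i}$ vanish for all $i > k+1$, so the series in \eqref{eq:def:Lyapunov} has only finitely many nonzero terms. In particular $\xi^0 = \|x^0-x^*\|^2$ is deterministic and finite, and inductively $\xi^k \in L^1$ because $Y_k$ is bounded above in expectation by $\xi^0$.

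Applying the Robbins--Siegmund theorem (see, e.g., the version in \cite{BauschkeCombettes2011_convex}) to the sequence $(Y_k, Z_k)$ with the filtration $(\cG^k)$ yields two conclusions simultaneously and with probability $1$: first, $\xi^k$ converges to some non-negative random variable; second, $\sum_{k=0}^\infty Z_k < \infty$. Since $(1-c)ch/m$ is a strictly positive constant, this second conclusion factors out the constant and gives precisely \eqref{eq:lem:Sum-Sxhat-stochastic}.

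I expect no real obstacle here beyond bookkeeping: the descent lemma has been set up exactly to make the Robbins--Siegmund hypothesis hold with $A_k = B_k = 0$, and the constant step size $\eta^k = ch$ removes any subtlety that would otherwise arise from a vanishing or time-dependent contraction factor. The only thing worth flagging is that the convergent limit of $\xi^k$ need not be zero -- it only provides the norm-convergence ingredient for \Cref{prop:Convergence-of-fixed-point-algorithms}, while the summability conclusion \eqref{eq:lem:Sum-Sxhat-stochastic} will be the input used in subsequent steps to extract FPR strong convergence.
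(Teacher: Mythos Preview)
Your proposal is correct and essentially identical to the paper's proof: the paper applies the supermartingale convergence theorem (\Cref{thm:Supermartingale-convergence-theorem}, which is precisely Robbins--Siegmund with $\gamma^k=0$) to the descent inequality from \Cref{lem:Descent-lemma-stochastic}, then factors out the constant $(1-c)ch/m$ to obtain \eqref{eq:lem:Sum-Sxhat-stochastic}. Your additional bookkeeping on the finiteness and non-negativity of $\xi^k$ is a welcome clarification that the paper omits.
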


The proof of this lemma relies on the following:

\begin{thm}[Supermartingale convergence theorem \cite{CombettesPesquet2015_stochastic}]\label{thm:Supermartingale-convergence-theorem}
Let $\alpha^{k}$, $\theta^{k}$ and $\gamma^{k}$ be positive sequences
adapted to $\cF^{k}$, and let $\gamma^{k}$ be summable with probably 1. If
\begin{align*}
\EE\sp{\alpha^{k+1}|\cF^{k}}+\theta^{k} & \leq \alpha^{k}+\gamma^{k},
\end{align*}
then with probability 1, $\alpha^{k}$ converges to a $[0,\infty)$-valued random variable,
and $\sum_{k=1}^{\infty}\theta^{k}<\infty$.
\end{thm}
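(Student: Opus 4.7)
The plan is to prove this classical Robbins--Siegmund result by reducing it to Doob's nonnegative supermartingale convergence theorem via a compensated process, together with a stopping-time localization to handle the fact that the compensated process is not automatically nonnegative.

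I would first define the compensated process
\begin{equation*}
W^k \;=\; \alpha^k + \sum_{i=0}^{k-1}\theta^i - \sum_{i=0}^{k-1}\gamma^i,
\end{equation*}
which is $\cF^k$-measurable because each of the three input sequences is $\cF^k$-adapted. A direct computation using the hypothesis $\EE\sp{\alpha^{k+1}\big|\cF^k} + \theta^k \leq \alpha^k + \gamma^k$ yields $\EE\sp{W^{k+1}\big|\cF^k} \leq W^k$, so $W^k$ is an $\cF^k$-supermartingale. The obstacle is that the $-\sum \gamma^i$ term can make $W^k$ negative, so Doob's nonnegative supermartingale convergence theorem does not apply to $W^k$ directly. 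For each $M>0$, I introduce the stopping time $\tau_M = \inf\cp{k : \sum_{i=0}^{k-1}\gamma^i > M}$; on $\cp{k \leq \tau_M}$ the accumulated $\gamma$-sum is at most $M$, so
\begin{equation*}
W^{k\wedge \tau_M} + M \;\geq\; \alpha^{k\wedge\tau_M} + \sum_{i=0}^{k\wedge\tau_M - 1}\theta^i \;\geq\; 0.
\end{equation*}
Since stopping preserves the supermartingale property, $W^{k\wedge \tau_M} + M$ is a \emph{nonnegative} supermartingale, and Doob's theorem yields almost-sure convergence of $W^{k\wedge\tau_M}$ to an integrable random variable.

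To conclude, I cover the sample space by the increasing events $\cp{\tau_M = \infty}$, $M\in\NN$. By hypothesis $\sum_i \gamma^i < \infty$ almost surely, so $\cup_{M\in\NN}\cp{\tau_M = \infty}$ has full probability. On each such event the stopping is vacuous, so $W^k$ itself converges a.s. Rearranging the definition of $W^k$ as
\begin{equation*}
\alpha^k + \sum_{i=0}^{k-1}\theta^i \;=\; W^k + \sum_{i=0}^{k-1}\gamma^i,
\end{equation*}
the right side converges a.s.\ on $\cp{\tau_M = \infty}$; since both left-hand summands are nonnegative and $\sum_{i=0}^{k-1}\theta^i$ is nondecreasing in $k$, monotone convergence gives $\sum_{i=0}^{\infty}\theta^i < \infty$ a.s.\ on $\cp{\tau_M = \infty}$, whence $\lim_k \alpha^k$ also exists a.s.\ there. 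Taking the union over $M\in\NN$ promotes both statements to the full-probability event $\cp{\sum_i \gamma^i < \infty}$, and the limit of $\alpha^k$ is nonnegative because each $\alpha^k \geq 0$.

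The main technical issue I expect is the localization step: verifying that $W^{k\wedge\tau_M}+M$ is a genuine nonnegative supermartingale (in particular that $\cp{k \leq \tau_M}$ is $\cF^{k-1}$-measurable so that stopping is well-defined and integrability is preserved), and then carefully passing from a.s.\ statements on each $\cp{\tau_M = \infty}$ to a.s.\ statements on the full-probability event via a countable union over $M\in\NN$. Everything else reduces to routine martingale accounting and the monotonicity/nonnegativity of the input sequences.
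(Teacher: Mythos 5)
This statement is not proved in the paper at all: it is imported verbatim as a known result, with the proof deferred to the cited work of Combettes and Pesquet (it is the Robbins--Siegmund-type quasi-Fej\'er lemma). So your argument is not paralleling anything in the paper; it is a self-contained proof along the classical lines, and it is essentially correct. The compensated process $W^k=\alpha^k+\sum_{i<k}\theta^i-\sum_{i<k}\gamma^i$ is indeed a supermartingale under the stated inequality, the stopping time $\tau_M=\inf\{k:\sum_{i<k}\gamma^i>M\}$ is a legitimate stopping time (the event $\{\tau_M\geq k\}$ depends only on $\gamma^0,\ldots,\gamma^{k-1}$, hence is $\cF^{k-1}$-measurable), stopping preserves the supermartingale property, and the countable union over $M\in\NN$ of $\{\tau_M=\infty\}$ exhausts the probability-one event $\{\sum_i\gamma^i<\infty\}$; the rearrangement plus monotonicity then gives both $\sum_k\theta^k<\infty$ and convergence of $\alpha^k$ a.s. The one technical point you should tighten is integrability: the theorem as stated assumes only that the sequences are nonnegative and adapted, so $W^{k\wedge\tau_M}+M$ need not be integrable (already $\alpha^0$ may fail to be), and the textbook form of Doob's nonnegative supermartingale convergence theorem requires integrability. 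This is repaired by a second, equally routine localization --- intersect with the events $\{\alpha^0\leq n\}\in\cF^0$, $n\in\NN$, which also cover the space, or invoke the convergence theorem for generalized (not necessarily integrable) nonnegative supermartingales, which is exactly how the cited reference handles $[0,\infty)$-valued sequences. With that caveat addressed, your proof is complete and is in the same spirit as, though independent of, the source the paper cites.
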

We now prove \Cref{lem:Sum-Sxhat-stochastic}.
\begin{proof}
Apply \Cref{thm:Supermartingale-convergence-theorem} with
$\alpha^{k}=\xi^{k}$, $\gamma^k=0$, and $\theta^{k}=\p{1-\eta^{k}/h}\frac{\eta^{k}}{m}\sum_{\vec{j}\in\NN^{m}}p\p{\vec{j}}\n{ Sx^{k-\vec{j}}} ^{2}$.
We immediately obtain our result by noting that $\p{1-\eta^{k}/h}\frac{\eta^{k}}{m}$
is a constant.
\end{proof}

\subsubsection{Norm convergence}
Now is the point where the ``evenly old'' assumption about the delays made in \Cref{asmp:Stochastic-delays} becomes important, and it is hard to see a way to weaken it. First a lemma on convolutions is necessary.

\begin{lem}[Convolution lemma (\cite{ArendtBattyHieberNeubrander2011_vectorvalued}, Proposition 1.3.2)] \label{lem:Convolution-lemma}
Define the convolution of sequences $a=(\ldots, a_{-2}, a_{-1}, a_0, a_1, a_2,\ldots)$ and $b=(\ldots, b_{-2}, b_{-1}, b_0, b_1, b_2,\ldots)$ as the sequence defined by the formula\footnote{The convolution is not always well-defined, because the sum may not be convergent for all $k$. However in this lemma, it is well-defined.}:
\begin{align}
\p{a*b}(k) &= \sum_{i=-\infty}^{\infty}a_{i}b_{k-i}.
\end{align}
Let $a_{i}$ be in $\ell^{1}$, and let $b$ be bounded with $b_{i}\to0$ as $i\to\infty$. Then the convolution $(a*b)\p{k}\to0$ as $k\to\infty$.
\end{lem}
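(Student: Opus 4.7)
The plan is to prove this by a standard tail-splitting ($\varepsilon/2$) argument, which is the discrete analog of dominated convergence. The main idea is that (i) for indices $i$ of small magnitude, the shifted argument $k-i$ is eventually large, so $b_{k-i}$ is small; (ii) for indices $i$ of large magnitude, the summability of $a$ makes the contribution small irrespective of $b_{k-i}$, which is merely bounded.

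More concretely: let $M = \sup_j |b_j| < \infty$ by hypothesis, and assume $a \neq 0$ (otherwise the statement is trivial). Given $\varepsilon > 0$, I would first use $a \in \ell^{1}$ to choose $K$ such that
\begin{align*}
\sum_{|i| > K} |a_i| \;\leq\; \frac{\varepsilon}{2M}.
\end{align*}
Next, using $b_j \to 0$ as $j \to \infty$, I would pick $N$ such that $|b_j| \leq \varepsilon / (2\|a\|_{\ell^1})$ for all $j \geq N$. Then for any $k > N + K$ and any $i$ with $|i| \leq K$, we have $k - i \geq k - K > N$, so $|b_{k-i}| \leq \varepsilon/(2\|a\|_{\ell^1})$.

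With these choices in place, I would split the convolution as
\begin{align*}
(a*b)(k) \;=\; \sum_{|i|\le K} a_i\, b_{k-i} \;+\; \sum_{|i| > K} a_i\, b_{k-i},
\end{align*}
and bound the two pieces by $\varepsilon/2$ each: the first by pulling $|b_{k-i}|$ out and using $\sum_{|i|\le K} |a_i| \le \|a\|_{\ell^1}$, the second by bounding $|b_{k-i}| \le M$ and invoking the $\ell^1$-tail estimate. This gives $|(a*b)(k)| \le \varepsilon$ for all $k > N+K$, proving $(a*b)(k) \to 0$.

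I do not anticipate a real obstacle here, as this is a standard fact; the only mild subtlety is that the sum is bi-infinite while $b_j \to 0$ is assumed only as $j \to +\infty$. This is handled by the asymmetric split: the $|i| \le K$ piece is controlled because $k - i$ is forced to be \emph{large and positive} once $k$ is large (since $i \le K$), while the $|i| > K$ piece never relies on the decay of $b$ at all.
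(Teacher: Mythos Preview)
Your proposal is correct: the tail-splitting $\varepsilon/2$ argument is the standard way to prove this, and you have handled the one genuine subtlety (decay of $b$ is assumed only as $j\to+\infty$) correctly by noting that for $|i|\le K$ and $k$ large, $k-i$ is forced to be large and positive. The paper does not actually prove this lemma; it is quoted from \cite{ArendtBattyHieberNeubrander2011_vectorvalued}, Proposition~1.3.2, so there is no in-paper argument to compare against.
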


\begin{prop}[Norm convergence] \label{prop:Norm-convergence-stochastic}
Let \Cref{asmp:Block-sequence,asmp:Stochastic-delays,asmp:Coefficient-formula-stochastic} hold. Then
with probability $1$, $\n{ x^{k}-x^{*}} $ converges for all $x^{*}\in\normalfont{\text{Fix}}\p{T}$.

\end{prop}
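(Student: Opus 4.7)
The plan is to combine the almost-sure convergence of the Lyapunov function $\xi^k$ (established in \Cref{lem:Sum-Sxhat-stochastic}) with the Convolution Lemma to deduce convergence of $\n{x^k - x^*}$. Decompose
\begin{align*}
\xi^k &= \n{x^k-x^*}^2 + E^k, \qquad E^k := \frac{1}{m}\sum_{i=1}^\infty c_i\n{x^{k+1-i}-x^{k-i}}^2.
\end{align*}
Since $\xi^k$ converges a.s., it suffices to prove $E^k \to 0$ a.s. Setting $d_k := \n{x^{k+1}-x^k}^2$, one has $m E^k = (c * d)(k)$, where $c = (c_i)_{i \geq 1}$ lies in $\ell^1$ by \eqref{eq:asmp:Coefficients-summable-stochastic}, and $d$ is bounded a.s.\ (as $\xi^k$ is a.s.\ convergent and hence bounded, forcing $(x^k)$ to be bounded). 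By \Cref{lem:Convolution-lemma}, it is then enough to prove $d_k \to 0$ a.s.

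I will establish the stronger statement $\sum_{k \geq 0} d_k < \infty$ a.s. By the ARock update \eqref{eq:def:ARock}, taking expectation first over $i(k)$ (uniform on $\{1,\ldots,m\}$) and then over $\vec{j}(k)$,
\begin{align*}
\EE\sp{d_k \,\big|\, \cG^k} &= \frac{\p{\eta^k}^2}{m}\sum_{\vec{j} \in \NN^m}p(\vec{j})\n{Sx^{k-\vec{j}}}^2.
\end{align*}
Since $\eta^k = ch$ is constant in $k$, \Cref{lem:Sum-Sxhat-stochastic} (specifically \eqref{eq:lem:Sum-Sxhat-stochastic}) implies $\sum_{k \geq 0} \EE\sp{d_k \,\big|\, \cG^k} < \infty$ almost surely. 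Invoking the extended Borel--Cantelli lemma for adapted non-negative sequences (which converts a.s.\ summability of conditional expectations into a.s.\ summability of the underlying sequence, via a Doob decomposition and a localization argument on the nondecreasing process $\sum_{l<k} d_l$) then gives $\sum_{k \geq 0} d_k < \infty$ a.s., so $d_k \to 0$ a.s.

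Combining the pieces, $E^k \to 0$ a.s., and thus $\n{x^k - x^*}^2 = \xi^k - E^k$ converges a.s.\ to $\lim_k \xi^k$. To upgrade this per-$x^*$ statement to the ``for every $x^* \in \Fix(T)$'' statement of the proposition, exploit separability of $\HH$: pick a countable dense subset of $\Fix(T)$, union the associated null sets into one full-measure event on which norm convergence holds for every point of this subset, and extend to arbitrary $x^* \in \Fix(T)$ via the reverse triangle inequality together with the a.s.\ boundedness of $(x^k)$. The main obstacle is the extended Borel--Cantelli step, which is standard but non-trivial; the remaining ingredients follow directly from results already in hand.
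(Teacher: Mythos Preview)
Your argument is correct but takes a different route from the paper. Both proofs reduce to showing that the asynchronicity error $E^k=\frac{1}{m}\sum_{i\ge 1}c_i\n{x^{k+1-i}-x^{k-i}}^2$ tends to $0$ a.s., and both finish by writing $E^k$ as a convolution of the $\ell^1$ sequence $(c_i)$ with a bounded sequence and invoking \Cref{lem:Convolution-lemma}, then lifting the per-$x^*$ statement to all $x^*\in\Fix(T)$ via separability. The divergence is in how that bounded sequence is shown to vanish. The paper first proves $\n{S\hat{x}^k}\to 0$ through a four-step argument that relies essentially on the \emph{evenly old} part of \Cref{asmp:Stochastic-delays}: it uses summability of $P_l$ and Borel--Cantelli to get $k-j(k)\to\infty$, groups the sum \eqref{eq:lem:Sum-Sxhat-stochastic} by the finitely many feasible ``patterns'' $\vec{t}\in\{0,\ldots,B\}^m$, and deduces $\n{Sx^{k+\vec{t}}}\to 0$ and then $\n{S\hat{x}^k}\to 0$. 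Only then does it bound $\n{x^{k+1-i}-x^{k-i}}^2\le (ch)^2\n{S\hat{x}^{k-i}}^2$ and apply the convolution lemma.

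You bypass the pattern argument entirely: from \eqref{eq:lem:Sum-Sxhat-stochastic} you read off $\sum_k\EE[d_k\mid\cG^k]<\infty$ a.s.\ and invoke the conditional Borel--Cantelli lemma for nonnegative adapted sequences to conclude $\sum_k d_k<\infty$ a.s., hence $d_k\to 0$. This is shorter and, notably, does not use the evenly-old bound $B$ at all for this proposition (the paper still needs it for \Cref{prop:FPRSC-stochastic}). The cost is that you import a martingale result not stated in the paper; it is indeed standard (a consequence of the Doob decomposition plus localization, or the equality of the sets $\{\sum_k X_k<\infty\}$ and $\{\sum_k\EE[X_k\mid\cF_{k-1}]<\infty\}$ a.s.), but you should cite it precisely rather than sketch it. Your independence reasoning behind $\EE[d_k\mid\cG^k]=\frac{(\eta^k)^2}{m}\sum_{\vec{j}}p(\vec{j})\n{Sx^{k-\vec{j}}}^2$ is correct under \Cref{asmp:Block-sequence,asmp:Stochastic-delays}, since $(i(k),\vec{j}(k))$ is independent of $\cG^k$.
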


\begin{proof}
We first prove that with probability $1$, $\frac{1}{m}\sum_{i=1}^{\infty}c_{i}\n{ x^{k+1-i}-x^{k-i}} ^{2}\to0$.

\textbf{1. ${P_{l}}$ is summable.}
Since the sequence $\frac{1}{\eps_{i}}$ is summable, $\eps_{i}\to\infty$, and thus $\inf_{i\in\NN} \eps_i>0$.
Hence
\begin{align*}
\sum_{l=1}^{\infty}P_{l} &\leq \frac{1}{\inf_{i\in\NN} \eps_i}\sum_{l=1}^{\infty}\eps_{l}P_{l}=\frac{1}{\inf_{i\in\NN} \eps_i}c_1<\infty
\end{align*}
\textbf{2. ${k-j\p{k}\to\infty}$.} (That is, the components of iterate $x^k$ are used only a finite number of times).
\begin{align*}
\PP\sp{k-j\p{k}\leq k_{0}} &= P_{k-k_{0}}\\
\sum_{k=k_0}^\infty \PP\sp{k-j\p{k}\leq k_{0}} &= \sum_{k=k_0}^\infty P_{k-k_{0}} < \infty
\end{align*}
Therefore by the Borel-Cantelli lemma, $k-j\p{k}\leq k_{0}$ happens only a finite number of times with probability $1$. Hence with probability $1$, this is true for all $k_{0}\in\NN$, which implies that $k-j\p{k}\to\infty$.

\textbf{3. $Sx^{k+\vec{t}}\to0$ for all delay feasible ``patterns'' $\vec{t}$.}
We assume without loss in generality that none of the delay vectors attained ($\vec{j}\p{0},\vec{j}\p{1},\ldots$) has probability $0$ (since this occurs with probability 1). Let $\vec{t}\p{k}\triangleq j\p{k}\p{1,\ldots,1}-\vec{j}\p{k}$. $j\p{k}$ is the age of the oldest block in $x^{k-\vec{j}\p{k}}$, whereas $\vec{t}\p{k}\in\cp{ 0,1,\ldots,B} ^{m}$ represent the ``pattern'' of the rest of the delay. We call a vector $\vec{t}\in\cp{ 0,1,\ldots,B} ^{m}$ \textbf{feasible} if it occurs with nonzero probability. Take \eqref{eq:lem:Sum-Sxhat-stochastic}, and group the sum into feasible patterns and we obtain:
\begin{align}
\sum_{k=0}^{\infty}\n{ Sx^{k+\vec{t}}} ^{2} < & \infty,\nonumber \\
\implies\n{ Sx^{k+\vec{t}}} \to & 0,\label{eq:Sx^(k+t)-goes-to-0}
\end{align}
for each feasible $\vec{t}$.

\textbf{4. Delayed fixed-point residual} ${\n{ Sx^{k-\vec{j}\p{k}}} \to0}$. Observe that
\begin{align*}
\n{ Sx^{k-\vec{j}\p{k}}}  &= \n{ Sx^{\p{k-j\p{k}}+\vec{t}\p{k}}}.
\end{align*}
Let $A\p{k,\vec{t}}= \n{ Sx^{\p{k-j\p{k}}+\vec{t}}}$ (this is a family of sequences indexed by $\vec{t}$). By equation \eqref{eq:Sx^(k+t)-goes-to-0}, and the fact that $k-j\p{k}\to\infty$, we have $A(k,\vec{t})\to 0$ for any \textit{fixed} $\vec{t}$. Notice that $\n{ Sx^{k-\vec{j}\p{k}}}=A(k,\vec{t}(k))$. At every step, $A(k,\vec{t}(k))$ selects one from a finite family of sequences, all of which converge to $0$. Since there are only a finite number of these sequences, $A(k,\vec{t}(k))\to 0$ and hence $\n{Sx^{k-\vec{j}\p{k}}}\to0$\footnote{If you select from an \textit{infinite} number of sequences converging to $0$, this may not be true. E.g. consider $B(k,i)=\delta_{k-i}$, where $\delta_0=1$ and $\delta_l=0$ for all $l\neq0$. For fixed $i$, $B(k,i)\to 0$. However $B(k,k)=1$ for all $k$, and hence never converges to $0$.}.

\textbf{5. Difference sum converges to $0$.}
\begin{align*}
&\quad\frac{1}{m}\sum_{i=1}^{\infty}c_{i}\n{ x^{k+1-i}-x^{k-i}} ^{2}\\
&\leq \frac{c^2h^{2}}{m}\sum_{i=1}^{\infty}c_{i}\n{ Sx^{\p{k-i}-\vec{j}\p{k-i}}}^{2}\\
&=\frac{c^2h^{2}}{m}\p{\p{0,\ldots,0,c_1,c_2,\ldots}*\p{\ldots,\n{ Sx^{\p{i-1}-\vec{j}\p{i-1}}}^{2},\n{ Sx^{\p{i}-\vec{j}\p{i}}}^{2},\n{ Sx^{\p{i+1}-\vec{j}\p{i+1}}}^{2},\ldots}}(k)
\end{align*}
This expression is the convolution of an $\ell^1$ sequence (\Cref{asmp:Coefficient-formula-stochastic}), and a bounded sequence that converges to $0$ as $i\to\infty$ (by part 4 of this proof) respectively. Therefore by \Cref{lem:Convolution-lemma}, $\frac{1}{m}\sum_{i=1}^{\infty}c_{i}\n{ x^{k+1-i}-x^{k-i}} ^{2}\to0$.

\textbf{6. Norm convergence.} Because $\xi^{k}$ converges a.s. and
$\frac{1}{m}\sum_{i=1}^{\infty}c_{i}\n{ x^{k+1-i}-x^{k-i}} ^{2}\to0$
a.s., we have that for any particular $x^{*}$, $\n{ x^{k}-x^{*}} $
converges with probability $1$. Because the space is \textit{separable}, this implies that with probability $1$, $\n{ x^{k}-x^{*}}$ converges for \textbf{all} $x^{*}\in\text{Fix}\p{T}$, which is subtly different (See \cite{CombettesPesquet2015_stochastic}, Proposition 2.3 (iii) for a proof of this fact.).
\end{proof}

\subsubsection{Fixed-point-residual strong convergence}

\begin{prop}[FPR strong convergence]
\label{prop:FPRSC-stochastic}
Under the conditions of \Cref{prop:Norm-convergence-stochastic}, $\n{ Sx^{k}} \to0$ with probability 1.

\end{prop}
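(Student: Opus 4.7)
My plan is to reduce $\|Sx^k\|\to 0$ to two ingredients: convergence of $\|S\hat{x}^k\|$, which the norm-convergence proof has already delivered, and convergence of the discrepancy $\|x^k-\hat{x}^k\|$. Concretely, since $T$ is nonexpansive, $S=I-T$ is $2$-Lipschitz, so
\begin{align*}
\|Sx^k\| \leq \|S\hat{x}^k\| + \|Sx^k-S\hat{x}^k\| \leq \|S\hat{x}^k\| + 2\|x^k-\hat{x}^k\|.
\end{align*}
By step 4 (``delayed fixed-point residual'') of the proof of \Cref{prop:Norm-convergence-stochastic}, $\|S\hat{x}^k\|\to 0$ almost surely, so the whole argument reduces to showing $\|x^k-\hat{x}^k\|\to 0$ a.s.

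For that, I would telescope block-by-block. Because ARock updates a single block per step,
\begin{align*}
x_l^k - x_l^{k-j(k,l)} = \sum_{i=1}^{j(k,l)} \bigl(x_l^{k+1-i}-x_l^{k-i}\bigr),
\end{align*}
so by Cauchy--Schwarz, summation over $l$, and $j(k,l)\leq j(k)$,
\begin{align*}
\|x^k-\hat{x}^k\|^2 \leq j(k)\sum_{i=1}^{j(k)}\|x^{k+1-i}-x^{k-i}\|^2.
\end{align*}
Each increment satisfies $\|x^{k+1-i}-x^{k-i}\|^2\leq h^2\|S\hat{x}^{k-i}\|^2$ by the ARock rule, so it suffices to control $j(k)\sum_{i=1}^{j(k)}\|S\hat{x}^{k-i}\|^2$. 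A useful independent fact is that taking unconditional expectations in \Cref{lem:Descent-lemma-stochastic} and telescoping gives $\sum_k\EE\|S\hat{x}^k\|^2<\infty$, which in turn yields $\sum_k \|S\hat{x}^k\|^2 < \infty$ a.s.; hence the tail $\sum_{i\geq k-j(k)}\|S\hat{x}^i\|^2\to 0$ a.s., and $k-j(k)\to\infty$ a.s. (step 2 of the previous proof).

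The hard part, and the main obstacle, is the unbounded factor $j(k)$ multiplying a vanishing tail. To defeat it I would leverage two pieces of the setup not yet used in this form: the evenly-old property (\Cref{def:Evenly-old}) which pins $j(k,l)$ into a window of width $B$, and the strengthened summability $\sum_l \eps_l P_l\, l < \infty$ from \Cref{asmp:Coefficient-formula-stochastic}, which makes $\sum_i c_i<\infty$. Rather than the crude ``$j(k)\times$tail'' bound, I would instead weight the telescope by $c_i$ via Cauchy--Schwarz in the form $\bigl(\sum a_i\bigr)^2\leq\bigl(\sum c_i^{-1}\bigr)\bigl(\sum c_i a_i^2\bigr)$ applied on a truncated range, converting the head sum into a piece of the weighted asynchronicity error $\frac{1}{m}\sum_i c_i\|x^{k+1-i}-x^{k-i}\|^2$ that was already shown to vanish via \Cref{lem:Convolution-lemma} in the norm-convergence proof. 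Combined with the evenly-old bound on the width of $\vec j(k)$ relative to $j(k)$, this should show $\|x^k-\hat{x}^k\|\to 0$ a.s., completing the proof.
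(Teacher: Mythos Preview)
Your reduction $\|Sx^k\| \leq \|S\hat{x}^k\| + 2\|x^k - \hat{x}^k\|$ is fine, and step~4 of the proof of \Cref{prop:Norm-convergence-stochastic} does give $\|S\hat{x}^k\|\to 0$. The gap is in the second term: your sketch for proving $\|x^k-\hat{x}^k\|\to 0$ does not close. The weighted Cauchy--Schwarz you propose produces the factor $\sum_{i=1}^{j(k)} c_i^{-1}$, and since $c_i\downarrow 0$ this factor blows up along any subsequence where $j(k)\to\infty$. The summability $\sum_i c_i<\infty$ controls the \emph{other} factor, not this one; the evenly-old bound only limits the \emph{spread} of the block delays to a window of width $B$, it does not bound $j(k)$; and ``truncated range'' is not specified enough to help --- truncating at any fixed $N$ simply leaves an uncontrolled tail $\sum_{i=N+1}^{j(k)}\|x^{k+1-i}-x^{k-i}\|$ of unbounded length.

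The paper's proof sidesteps this difficulty by changing the comparison point. Instead of comparing $x^k$ with $\hat{x}^k=x^{k-\vec{j}(k)}$ (an unbounded telescope), it picks one \emph{fixed} feasible pattern $\vec{t}\in\{0,\ldots,B\}^m$ and compares $x^k$ with $x^{k+\vec{t}}$. Step~3 of the proof of \Cref{prop:Norm-convergence-stochastic} has already shown $\|Sx^{k+\vec{t}}\|\to 0$ for every such fixed $\vec{t}$, and since $\vec{t}$ is bounded by $B$ the telescope $\|x^{k+\vec{t}}-x^k\|\leq \sum_{i=1}^m\sum_{l=1}^{t_i}\|x_i^{k+l}-x_i^{k+l-1}\|$ contains at most $mB$ terms, each of which tends to zero because $\|x^{k+1}-x^k\|\to 0$. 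The key conceptual point you are missing is that one feasible fixed pattern sits at bounded distance from $x^k$, whereas $\hat{x}^k$ does not; you were effectively using step~4 (delayed FPR) when step~3 (fixed-pattern FPR) is the right ingredient here.
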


\begin{proof}
From equation \eqref{eq:lem:Sum-Sxhat-stochastic}, we have that
$\n{ Sx^{k+\vec{t}}} \to0$ for some feasible $\vec{t}$ (clearly there must be at least one feasible $\vec{t}$). Recall that $m$ is the number of blocks, and $B$ is the maximum
difference in age between blocks. We have
\begin{align*}
\n{ Sx^{k}} &\leq \n{ Sx^{k+\vec{t}}-Sx^{k}} +\n{ Sx^{k+\vec{t}}} \\
&\leq 2\n{ x^{k+\vec{t}}-x^{k}} +\n{ Sx^{k+\vec{t}}} \\
\text{(triangle inequality)}  &\leq 2\sum_{i=1}^{m}\n{ x_{i}^{k+t_{i}}-x_{i}^{k}} +\n{ Sx^{k+\vec{t}}} \\
&\leq 2\sum_{i=1}^{m}\sum_{l=1}^{t_i}\n{ x_{i}^{k+l}-x_{i}^{k-1+l}} +\n{ Sx^{k+\vec{t}}} \\
\text{(since $\vec{t}\in\{0,1,\ldots,B\}^m$)} &\leq 2m\sum_{l=1}^{B}\n{ x_{i}^{k+l}-x_{i}^{k-1+l}} +\n{ Sx^{k+\vec{t}}} \to 0,
\end{align*}
since $\n{ x^{k+1}-x^{k}} \to 0$ and $\n{ Sx^{k+\vec{t}}} \to 0$ (from parts 5 and 3 of the proof of \Cref{prop:Norm-convergence-stochastic} respectively).
\end{proof}

\subsubsection{Proof of \Cref{thm:Convergence-stochastic-delays-full}} \label{sec:sub:sub:Proof-of-theorem-stochastic}

\begin{proof}
Norm convergence is proven in \Cref{prop:Norm-convergence-stochastic}. The FPR strong convergence criterion is proven in \Cref{prop:FPRSC-stochastic}. Having satisfied the conditions of \Cref{prop:Convergence-of-fixed-point-algorithms}, we conclude that the sequence of ARock iterates converges to a solution with probability $1$. Hence we have proven \Cref{thm:Convergence-stochastic-delays-full}.
\end{proof}

\subsection{Parameter choice} \label{sec:Parameter-choice-stochastic}
Choosing different parameters $\eps_{1},\eps_{2},\ldots$ lead to different convergence results. We featured two possibilities in \Cref{thm:Convergence-stochastic-delays-intro} (though there are obviously others). We need both $\frac{1}{\eps_{i}}\in\ell^{1}$ and $\sum_{l=1}^{\infty}c_{l}=\sum_{l=1}^{\infty}\eps_{l}P_{l}l<\infty$ for convergence under step size $\eta^{k} =ch =c\p{1+\frac{1}{m}\sum_{l=1}^{\infty}\eps_{l}P_{l}+\n{ \frac{1}{\eps_{i}}}_{\ell^{1}}}^{-1}$.
\begin{enumerate}
\item If we wish to have the weakest restriction on our distribution of
delays, let $\eps_{l}=m^{-1/2}P_{l}^{-1/2}l^{-1/2}$. This leads
to the convergence condition $\sum_{l=1}^{\infty}P_{l}^{1/2}l^{1/2}<\infty$
for step size $\eta^{k} = c\p{1+\frac{1}{\sqrt{m}}\sum_{l=1}^{\infty}P_{l}^{1/2}\p{l^{1/2}+l^{-1/2}}}^{-1}$.
\item If we wish to have the largest allowable step size (at the expense
of a strong condition on the delay distribution), let $\eps_{l}=m^{-1/2}P_{l}^{-1/2}$.
This leads to the convergence condition $\sum_{l=1}^{\infty}P_{l}^{1/2}l^{1}<\infty$
for step size $\eta^{k}=c\p{1+\frac{2}{\sqrt{m}}\sum_{l=1}^{\infty}P_{l}^{1/2}}^{-1}$.
\end{enumerate}

\subsection{General strategy} \label{sec:General-strategy}
The general strategy for building Lyapunov functions is as follows. This has wide applicability in optimization, and not just asynchronous algorithms.

\textbf{General Strategy:}

\begin{rem}[General Strategy]
\begin{enumerate}
\item Let $\xi^{k}$ initially be the classical error $\n{ x^{k+1}} ^{2}$ (or $f(x^k)-f(x^*)$, or similar). We will adaptively change $\xi$, until we have a useful Lyapunov function. Calculate the expectation of the classical error $\EE\sp{\n{ x^{k+1}} ^{2}\big|\cF^{k}}$ and take inequalities (See \Cref{sec:Preliminary-results} where we obtained \Cref{prop:Fundamental-inequality}, the fundamental inequality.).
\item If this produces residual terms (in our case $\n{ x^{k+1-i}-x^{k-i}} ^{2}$) that we cannot eliminate, add a general linear combination of these terms to $\xi^{k}$. In this case, we add $\frac{1}{m}\sum_{i=1}^{\infty}c_{i}\n{ x^{k+1-i}-x^{k-i}} ^{2}$ to obtain the Lyapunov function in \Cref{def:Lyapunov-definition}.
\item Repeat steps 1 and 2 until we gain ``closure''. I.e. The \textit{positive terms} in the inequality for $\EE\sp{\xi^{k+1}\big|\cF^{k}}$ are the same as the terms found in $\xi^{k}$ (In our case, we only needed to do this once.).
\item\textit{Negative terms} are not problematic because they serve to decrease the expectation of $\xi^{k+1}$. They should not be eliminated because they can give useful information. In our case,
\begin{align}
-\frac{\eta^{k}}{m}\n{ Sx^{k-\vec{j}\p{k}}} ^{2}\p{1-\eta^{k}\p{1+\frac{c_{1}}{m}+\sum_{i=1}^{j\p{k}}\frac{1}{\eps_{i}}}}
\end{align}
was a negative term (see \Cref{lem:Descent-lemma-stochastic}).  This term was critical in the proof of the norm convergence and FPR strong convergence criterion in \Cref{sec:sub:Convergence-proof-stochastic}. See \Cref{lem:Sum-Sxhat-stochastic}, \Cref{prop:Norm-convergence-stochastic,prop:FPRSC-stochastic}.).
\item Vary the coefficients of the Lyapunov function to enable a useful comparison between $\EE\sp{\xi^{k+1}\big|\cF^{k}}$ and $\xi^{k}$ (See \Cref{lem:Descent-lemma-stochastic} where the coefficient formula in \Cref{asmp:Coefficient-formula-stochastic} was derived).
\end{enumerate}

\end{rem}

Which inequalities to take and which residual terms to create is a matter of trial and error. Some choices lead to dead ends, whereas others lead to a viable proof.

\subsection{Bounded delay} \label{sec:Bounded-delay-stochastic}
Our main focus is on unbounded delay, because convergence under unbounded delay is a new result. It is easy, though, to modify this section's proof for the case of bounded delay, which results in a much simpler proof. Let $\epsilon_1,\ldots,\epsilon_\tau,\in(0,\infty)$ be a series of parameters, let $c\in(0,1)$, let the step size be $\eta^k=c\p{1+\sum^\tau_{l=1}\p{\frac{1}{m}\epsilon_l P_l+\frac{1}{\epsilon_l}}}^{-1}$. Then we have convergence with probability $1$. The proof uses the following Lyapunov function instead of an infinite sum version:
\begin{align*}
\xi^{k} &= \n{ x^{k} -x^*} ^{2}+\frac{1}{m}\sum_{i=1}^{\tau}c_{i}\n{ x^{k+1-i}-x^{k-i}} ^{2},\quad \text{ for } c_{i} = \sum_{l=i}^{\tau}\eps_{l}P_l.
\end{align*}

\section{Proof of Convergence for Unbounded Deterministic Delays} \label{sec:Proof-of-convergence-deterministic}
Proving convergence for deterministic delays leads to a slightly weaker convergence result. This is likely because deterministic unbounded delay is a very general condition. Below is our most general result:

\begin{thm}[Convergence  under deterministic delays]\label{thm:Convergence-deterministic-delays-full}
Consider ARock under the following conditions:

1. The block sequence $i(k)$ is a sequence of uniform IID random variables (\Cref{asmp:Block-sequence}).

2. The sequence of delay vectors $\vec{j}(0), \vec{j}(1), \vec{j}(2),\ldots$ is an arbitrary sequence in $\NN^m$, independent of $i(k)$, with $\liminf j\p{k}<\infty$ (\Cref{asmp:Deterministic-delays}).

3. Let $\eps_1, \eps_2, \ldots \in (0,\infty)$ be an arbitrary sequence of parameters such that $\sum^\infty_{l=1}\eps_l<\infty$.

4. The step size is set to $\eta^k=ch_{j(k)}$ for some arbitrary fixed $c\in(0,1)$ and $h_j = \p{1+\frac{1}{m} \n{ \eps_{i}} _{\ell^{1}} + \sum_{i=1}^{j}\frac{1}{\eps_{i}}}^{-1}$.

Then with probability $1$, the sequence of ARock iterates converges weakly to a solution on subsequences of bounded delay (\Cref{def:Convergence-subsequences-bd-delay}).

\end{thm}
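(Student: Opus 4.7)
My plan is to recycle the Lyapunov-function construction of \Cref{sec:Proof-of-convergence-stochastic}, replacing every $\eps_l P_l$ by $\eps_l$ (deterministic delays correspond to the worst case $P_l\equiv 1$). I would define $c_i = \sum_{l=i}^{\infty}\eps_l$---finite, bounded, and tending to $0$ by Assumption~3---and the Lyapunov function
\begin{align*}
\xi^k = \n{x^k - x^*}^2 + \frac{1}{m}\sum_{i=1}^{\infty}c_i\,\n{x^{k+1-i} - x^{k-i}}^2,
\end{align*}
with the convention $x^n = x^0$ for $n<0$. Note that $c_1 = \n{\eps}_{\ell^1}$ and the telescoping identity $c_i - c_{i+1} = \eps_i$ plays exactly the role that $\eps_i P_i$ does in \Cref{lem:Descent-lemma-stochastic}.

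First, I would invoke \Cref{prop:Fundamental-inequality} and run the branch-point computation of \Cref{lem:Branch-point-lemma} essentially verbatim---both proofs depend only on the IID-block hypothesis and on the $\cF^k$-measurability of $\eta^k$, which hold here. After absorbing the $\tfrac{c_1}{m}\EE\sp{\n{x^{k+1}-x^k}^2\big|\cF^k}$ contribution from the shift, the positive iterate-difference coefficients telescope into those of $\xi^k$, and the only surviving negative term is
\begin{align*}
-\frac{\eta^k}{m}\n{S\hat{x}^k}^2\left(1 - \eta^k\left(1 + \frac{c_1}{m} + \sum_{i=1}^{j(k)}\frac{1}{\eps_i}\right)\right).
\end{align*}
The inner bracket equals precisely $1/h_{j(k)}$, so the delay-adaptive choice $\eta^k = c h_{j(k)}$ produces the descent inequality $\EE\sp{\xi^{k+1}\big|\cF^k} \le \xi^k - \frac{(1-c)\eta^k}{m}\n{S\hat{x}^k}^2$. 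Applying \Cref{thm:Supermartingale-convergence-theorem} then gives, almost surely, convergence of $\xi^k$ in $[0,\infty)$ together with $\sum_k \eta^k\n{S\hat{x}^k}^2 < \infty$.

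Next I would verify the two hypotheses of \Cref{prop:Convergence-of-fixed-point-algorithms}. For FPR strong convergence on a bounded-delay subsequence $Q_J$: since $j\mapsto h_j$ is decreasing, $\eta^k \ge c h_J > 0$ on $Q_J$, whence $\sum_{k\in Q_J}\n{S\hat{x}^k}^2 < \infty$ and $\n{S\hat{x}^k}\to 0$ along $Q_J$. From $\eta^k\le c<1$ I get $\sum_k\EE\sp{\n{x^{k+1}-x^k}^2\big|\cF^k} = \sum_k\frac{(\eta^k)^2}{m}\n{S\hat{x}^k}^2 \le \frac{c}{m}\sum_k\eta^k\n{S\hat{x}^k}^2 < \infty$, so a second application of \Cref{thm:Supermartingale-convergence-theorem} (to $\alpha^k = \sum_{l<k}\n{x^{l+1}-x^l}^2$ with $\gamma^k = \EE\sp{\n{x^{k+1}-x^k}^2\big|\cF^k}$) gives $\sum_k\n{x^{k+1}-x^k}^2<\infty$ almost surely. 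A telescoping plus Cauchy--Schwarz then yields, on $Q_J$, $\n{x^k-\hat{x}^k}^2 \le J\sum_{i=1}^{J}\n{x^{k-i+1}-x^{k-i}}^2\to 0$, so nonexpansiveness delivers $\n{Sx^k}\le 2\n{x^k-\hat{x}^k}+\n{S\hat{x}^k}\to 0$ along $Q_J$. For norm convergence of the full sequence, I would show the asynchronicity error in $\xi^k$ vanishes: writing $d_l = \n{x^{l+1}-x^l}^2$ (with $d_l=0$ for $l<0$), the error equals $\frac{1}{m}\sum_{i\ge 1}c_i d_{k-i}$, dominated termwise by $\n{c}_\infty d_{k-i}$, and since $c_{k-l}\to 0$ as $k\to\infty$ for each fixed $l\ge 0$, dominated convergence (with summable dominant $\n{c}_\infty d_l$) forces the sum to $0$. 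Hence $\n{x^k-x^*}^2\to\lim_k\xi^k$ for every $x^*\in\Fix(T)$.

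Finally, \Cref{prop:Convergence-of-fixed-point-algorithms} applied along each $Q_J$ produces a weak subsequential limit $x^*_J\in\Fix(T)$. Because norm convergence has been established on the \emph{full} sequence for \emph{every} fixed point, the standard Opial argument forces all such subsequential weak limits to coincide, giving a single $x^*$ independent of $J$. I expect the most delicate step to be the dominated-convergence argument for the asynchronicity error under only the hypothesis $\n{\eps}_{\ell^1}<\infty$: it would be tempting but incorrect to demand $c\in\ell^1$ (which corresponds to the strictly stronger $\sum_l l\eps_l<\infty$), and the key saving observation is that $\sum_k\n{x^{k+1}-x^k}^2$ is a.s.\ finite, so the convolution $\sum_l c_{k-l}d_l$ tends to zero using only boundedness and vanishing-at-infinity of $c$.
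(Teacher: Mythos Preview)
Your proposal is correct and follows essentially the same route as the paper: identical Lyapunov coefficients $c_i=\sum_{l\ge i}\eps_l$, the same branch-point descent inequality, supermartingale convergence, and the reversed-role convolution argument (bounded, vanishing $c_i$ against $\ell^1$ increments) for the asynchronicity error that you correctly flag as the delicate point. The only simplification you miss is that $\sum_k\n{x^{k+1}-x^k}^2<\infty$ follows directly from the pointwise bound $\n{x^{k+1}-x^k}\le \eta^k\n{S\hat x^k}\le ch_{j(k)}\n{S\hat x^k}$, so no second supermartingale is needed.
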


This theorem is proven in \Cref{sec:sub:sub:Proof-of-theorem-deterministic}. Similar to \Cref{thm:Convergence-stochastic-delays-full}, there is a sequence of parameters $\eps_1, \eps_2, \ldots$. However in the case of deterministic delays, there is no ``best'' way to chose $\epsilon_i$'s unless stronger assumptions are made on the delays. It is impossible to optimize the parameters to uniformly ensure the maximum allowable step size, since optimizing for a current delay of $j=n$ can only come at the expense of decreasing the allowable step size for other values $m\neq n$. We set these parameters to a convenient, simple choice in \Cref{sec:Parameter-choice-deterministic} to obtain \Cref{thm:Convergence-deterministic-delays-intro} presented in the introduction.

\begin{rem}[Bounded delay]
We can obtain a bounded-delay version of \Cref{thm:Convergence-deterministic-delays-full} by truncating the metric to the first $\tau$ terms as in \Cref{sec:Bounded-delay-stochastic} and setting $\eps_{\tau+1},\eps_{\tau+2},\ldots=0$. Using the step size $\eta^k=c\p{1+ \sum_{i=1}^{j}\p{\frac{1}{m}\epsilon_l+\frac{1}{\eps_{i}}}}^{-1}$ results in convergence with probability $1$.
\end{rem}

\subsection{Building a Lyapunov function}
We build a Lyapunov function in a similar way to before. Our starting point is
the Branch Point \Cref{lem:Branch-point-lemma}. Recall that
$\cF^{k}=\sigma\p{x^{0},x^{1},\ldots,x^{k},\vec{j}\p{0},\vec{j}\p{1},\ldots,\vec{j}\p{k}}$, and let the Lyapunov function $\xi^k$ be defined as before in equation \eqref{eq:def:Lyapunov}. First though, it is necessary to make an assumption on the coefficients of the Lyapunov function. The necessity of this assumption will become clear in the proof of \Cref{lem:Descent-lemma-deterministic}.

\begin{asmp}[Coefficient formula] \label{asmp:Coefficient-formula-deterministic}
Let $\eps_1, \eps_2, \ldots \in (0,\infty)$ be an arbitrary sequence of parameters such that $\sum^\infty_{l=1}\eps_l<\infty$. The coefficients of the Lyapunov function in equation \eqref{eq:def:Lyapunov} are given by $c_i=\sum_{l=i}^\infty\eps_l$.
\end{asmp}

\subsubsection{Analysis of the metric}

\begin{lem}[Descent lemma for deterministic delays]\label{lem:Descent-lemma-deterministic}
Consider the Lyapunov function $\xi^k$ defined in \eqref{eq:def:Lyapunov}. Let \Cref{asmp:Block-sequence,asmp:Deterministic-delays,asmp:Coefficient-formula-deterministic} hold. Define
\begin{align}\label{eq:Step-size-deterministic}
H_{j}=\p{1+\frac{c_{1}}{m}+\sum_{i=1}^{j}\frac{1}{\eps_{i}}}^{-1}.
\end{align}
Then ARock yields the following inequality for step size $\eta^k$:
\begin{align}
\EE\sp{\xi^{k+1}\big|\cF^{k}} &\leq \xi^{k}-\frac{\eta^{k}}{m}\n{ Sx^{k-\vec{j}\p{k}}} ^{2}\p{1-\p{\eta^{k}/h_{j\p{k}}}}.
\end{align}
\end{lem}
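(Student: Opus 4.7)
The plan is to mimic the strategy of \Cref{lem:Descent-lemma-stochastic} but, because the delays are now deterministic, to stop at the filtration $\cF^k$ instead of passing to $\cG^k$. The whole argument reduces to plugging in the Branch Point Lemma and then choosing the coefficients $c_i$ to absorb the residual difference terms back into $\xi^k$.

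First, I would invoke \Cref{lem:Branch-point-lemma} directly, which yields for any $\eps_1,\eps_2,\ldots\in(0,\infty)$,
\begin{align*}
\EE[\xi^{k+1}|\cF^k] &\le \|x^k\|^2 + \frac{1}{m}\Bigl(\sum_{i=1}^{j(k)}\eps_i\|x^{k+1-i}-x^{k-i}\|^2 + \sum_{i=1}^\infty c_{i+1}\|x^{k+1-i}-x^{k-i}\|^2\Bigr) \\
&\quad - \frac{\eta^k}{m}\|Sx^{k-\vec{j}(k)}\|^2\Bigl(1 - \eta^k\bigl(1+\tfrac{c_1}{m}+\sum_{i=1}^{j(k)}\tfrac{1}{\eps_i}\bigr)\Bigr).
\end{align*}
The target is $\xi^k = \|x^k\|^2 + \frac{1}{m}\sum_{i=1}^\infty c_i\|x^{k+1-i}-x^{k-i}\|^2$, so I need to choose the $c_i$ so that the two sums above are dominated by $\sum_{i=1}^\infty c_i\|x^{k+1-i}-x^{k-i}\|^2$.

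Second, I would read off the natural coefficient recursion: for $i\le j(k)$ we need $\eps_i + c_{i+1}\le c_i$, and for $i>j(k)$ we need $c_{i+1}\le c_i$. Since $j(k)$ varies, the cleanest universal choice is equality $c_i - c_{i+1} = \eps_i$ for all $i$. Requiring $c_i\to 0$ (so that $\xi^k$ remains finite on bounded sequences) forces $c_i = \sum_{l=i}^\infty \eps_l$, which is precisely \Cref{asmp:Coefficient-formula-deterministic}; the summability of $\eps_l$ guarantees $c_1<\infty$. With this choice, the bracketed sum collapses to $\sum_{i=1}^\infty c_i\|x^{k+1-i}-x^{k-i}\|^2$ (with equality for $i\le j(k)$ and a slack $c_i - c_{i+1}= \eps_i>0$ for $i>j(k)$), so
\begin{align*}
\EE[\xi^{k+1}|\cF^k] \le \xi^k - \frac{\eta^k}{m}\|Sx^{k-\vec{j}(k)}\|^2\Bigl(1 - \eta^k\bigl(1+\tfrac{c_1}{m}+\sum_{i=1}^{j(k)}\tfrac{1}{\eps_i}\bigr)\Bigr).
\end{align*}
Finally, I would identify the parenthesised factor with $1/H_{j(k)}$ via \eqref{eq:Step-size-deterministic}, yielding the claimed bound with factor $1 - \eta^k/h_{j(k)}$.

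I do not expect any real obstacle here: the lemma is essentially a bookkeeping exercise once the Branch Point Lemma is available, and the deterministic case is in fact \emph{simpler} than the stochastic one because no expectation over $\vec{j}(k)$ is taken (so the factors $P_l$ never appear, and the coefficient recursion $c_i-c_{i+1}=\eps_i$ replaces $c_i-c_{i+1}=\eps_i P_i$). The only mildly subtle point is that, because $j(k)$ depends on $k$ in a potentially unbounded way, the descent factor $h_{j(k)}$ must adapt to the current delay; this is precisely what necessitates the adaptive step size $\eta^k = c h_{j(k)}$ appearing in \Cref{thm:Convergence-deterministic-delays-full} and motivates the rule \eqref{eq:Gamma-step-size-deterministic} used later.
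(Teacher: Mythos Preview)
Your proposal is correct and follows essentially the same route as the paper: start from the Branch Point \Cref{lem:Branch-point-lemma}, derive the recursion $c_i=c_{i+1}+\eps_i$ with $c_i\to 0$ (giving $c_i=\sum_{l\ge i}\eps_l$), and then identify the residual factor as $1/h_{j(k)}$. The only cosmetic difference is that the paper over-bounds $\sum_{i=1}^{j(k)}\eps_i$ by $\sum_{i=1}^{\infty}\eps_i$ before combining, whereas you keep the split $i\le j(k)$ versus $i>j(k)$ and note the slack explicitly; both lead to the same inequality.
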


\begin{proof}
Start from the Branch Point Lemma \eqref{lem:Branch-point-lemma}:
\begin{align*}
\EE\sp{\xi^{k+1}\big|\cF^{k}} &\leq \n{ x^{k}} ^{2}+\frac{1}{m}\p{\sum_{i=1}^{j\p{k}}\eps_{i}\n{ x^{k+1-i}-x^{k-i}} ^{2}+\sum_{i=1}^{\infty}c_{i+1}\n{ x^{k+1-i}-x^{k-i}} ^{2}}\\
&\quad -\frac{\eta^{k}}{m}\n{ Sx^{k-\vec{j}\p{k}}} ^{2}\p{1-\eta^{k}\p{1+\frac{c_{1}}{m}+\sum_{i=1}^{j\p{k}}\frac{1}{\eps_{i}}}}\\
&\leq \n{ x^{k}} ^{2}+\frac{1}{m}\p{\sum_{i=1}^{\infty}\p{\eps_{i}+c_{i+1}}\n{ x^{k+1-i}-x^{k-i}} ^{2}}
\quad -\frac{\eta^{k}}{m}\n{ Sx^{k-\vec{j}\p{k}}} ^{2}\p{1-\p{\eta^{k}/h_{j\p{k}}}}.
\end{align*}
First assume $\eta^{k}/h_{j(k)}\leq 1$, to eliminate the last term. Ideally we have $\EE\sp{\xi^{k+1}\big|\cF^{k}} \leq  \xi^{k}$, which can be achieved with:
\begin{align*}
\n{ x^{k}}^{2} + \frac{1}{m}\p{\sum_{i=1}^{\infty}\p{c_{i+1}+\eps_{i}}\n{ x^{k+1-i}-x^{k-i}} ^{2}} &\leq \n{ x^{k}} ^{2}+\frac{1}{m}\sum_{i=1}^{\infty}c_{i}\n{ x^{k+1-i}-x^{k-i}} ^{2}.
\end{align*}
Using a similar argument to the one used in the proof of \Cref{lem:Descent-lemma-stochastic}, we obtain the coefficient formula:
\begin{align*}
c_{i} &= \sum_{l=i}^{\infty}\eps_{l}.
\end{align*}
With this choice of coefficients,  \Cref{lem:Descent-lemma-deterministic} is proven.
\end{proof}

\subsection{Convergence proof}
Now that we have built the Lyapunov function, and obtained \Cref{lem:Descent-lemma-deterministic}, it is possible to prove convergence.

\begin{lem}\label{lem:Infinite-sum-lemma-deterministic}
Consider the Lyapunov function $\xi^k$ defined in \eqref{eq:def:Lyapunov}. Let \Cref{asmp:Block-sequence,asmp:Deterministic-delays,asmp:Coefficient-formula-deterministic} hold. Define $h_j$ via equation \eqref{eq:Step-size-deterministic}. Let the step size $\eta^{k}=ch_{j\p{k}}$ for an arbitrary fixed $c\in\p{0,1}$.
Then with probability $1$, $\xi^{k}$ converges, and we have:
\begin{align}
\sum_{k=1}^{\infty}h_{j\p{k}}\n{ Sx^{k-\vec{j}\p{k}}} ^{2} &< \infty,\label{eq:lem:Sum-HSxhat-finite}\\
\sum_{k=1}^{\infty}\n{ x^{k+1}-x^{k}} ^{2} &< \infty.\label{eq:lem:Sum-differences-finite}
\end{align}
Hence $h_{j\p{k}}\n{ Sx^{k-\vec{j}\p{k}}} ^{2}\to0$
and $\n{ x^{k+1}-x^{k}} \to0$.

\end{lem}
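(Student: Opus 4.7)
The plan is to substitute the step size $\eta^k = c h_{j\p{k}}$ into the Descent Lemma (\Cref{lem:Descent-lemma-deterministic}) and then invoke the Supermartingale Convergence Theorem (\Cref{thm:Supermartingale-convergence-theorem}). Since $\eta^k / h_{j\p{k}} = c$, the residual factor $\p{1 - \eta^k/h_{j\p{k}}}$ appearing in that lemma collapses to the strictly positive constant $1-c$, sharpening the descent bound to
\begin{align*}
\EE\sp{\xi^{k+1} \,\big|\, \cF^{k}} \;\leq\; \xi^{k} \;-\; \frac{c\p{1-c}}{m}\, h_{j\p{k}}\, \n{Sx^{k-\vec{j}\p{k}}}^{2}.
\end{align*}
With $\alpha^k = \xi^k$, $\gamma^k = 0$, and $\theta^k = \tfrac{c(1-c)}{m}\, h_{j\p{k}}\, \n{Sx^{k-\vec{j}\p{k}}}^{2}$, all nonnegative and $\cF^k$-adapted, \Cref{thm:Supermartingale-convergence-theorem} immediately gives both the a.s.\ convergence of $\xi^k$ and $\sum_k \theta^k < \infty$ a.s. Dividing by the positive constant $c(1-c)/m$ yields \eqref{eq:lem:Sum-HSxhat-finite}.

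For the second summation \eqref{eq:lem:Sum-differences-finite}, I would avoid further probabilistic machinery in favor of a pathwise bound. From the ARock update \eqref{eq:def:ARock}, the increment $x^{k+1}-x^k$ is supported on the single block $i\p{k}$ and equals $-\eta^k S_{i\p{k}}\hx^{k}$ there, so that
\begin{align*}
\n{x^{k+1}-x^k}^2 \;=\; \p{\eta^k}^2\, \n{S_{i\p{k}}\hx^{k}}^2 \;\leq\; \p{\eta^k}^2\, \n{S\hx^{k}}^2 \;=\; c^2 h_{j\p{k}}^2\, \n{S\hx^{k}}^2 \;\leq\; c^2\, h_{j\p{k}}\, \n{S\hx^{k}}^2,
\end{align*}
using the block decomposition $\n{Sx}^2 = \sum_l \n{S_l x}^2$ and the elementary fact $h_{j\p{k}} \leq 1$, which is immediate from \eqref{eq:Step-size-deterministic} because the denominator $1 + c_1/m + \sum_{i=1}^{j} 1/\eps_i$ is at least $1$. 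Summing over $k$ and combining with \eqref{eq:lem:Sum-HSxhat-finite} yields \eqref{eq:lem:Sum-differences-finite} with probability $1$. Both vanishing conclusions then follow at once from the fact that the terms of a convergent series of nonnegative reals must tend to zero.

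I do not anticipate a genuine obstacle: the substance of the argument has already been absorbed into the Descent Lemma, and what remains is essentially bookkeeping. The one point that warrants a moment's attention is why the $h_{j\p{k}}^{-1}$ hidden in the descent lemma's worst-case factor does not prevent strict decrease. This is precisely the purpose of the delay-adaptive step size $\eta^k = c h_{j\p{k}}$: it cancels that factor and leaves the clean constant $1-c$, so no assumption on the delay distribution is needed at this step, in contrast to the stochastic case. The same cancellation, combined with $h_{j\p{k}} \leq 1$, is also what makes the pathwise bound $h_{j\p{k}}^2 \leq h_{j\p{k}}$ tight enough to obtain the iterate-difference summability essentially for free from \eqref{eq:lem:Sum-HSxhat-finite}.
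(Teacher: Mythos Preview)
Your proposal is correct and follows essentially the same route as the paper: apply the Supermartingale Convergence Theorem to the inequality from \Cref{lem:Descent-lemma-deterministic} with $\alpha^k=\xi^k$, $\gamma^k=0$, and $\theta^k$ equal to the descent term (which, after substituting $\eta^k=ch_{j(k)}$, becomes $\tfrac{c(1-c)}{m}h_{j(k)}\n{S\hx^k}^2$), and then derive \eqref{eq:lem:Sum-differences-finite} pathwise from \eqref{eq:lem:Sum-HSxhat-finite} via $\n{x^{k+1}-x^k}^2\le c^2 h_{j(k)}^2\n{S\hx^k}^2\le h_{j(k)}\n{S\hx^k}^2$ using $h_{j(k)}\le 1$. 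The only cosmetic difference is ordering: the paper presents the pathwise reduction first and the supermartingale step second.
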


\begin{proof}
Now $\n{x^{k+1}-x^{k}} \leq ch_{j\p{k}}\n{ Sx^{k-\vec{j}\p{k}}}$ (see \Cref{def:ARock}), and $h_{j\p{k}}\leq1$. Hence:
\begin{align*}
\sum_{k=1}^{\infty}\n{ x^{k+1}-x^{k}} ^{2} &\leq \sum_{k=1}^{\infty}c^{2}h_{j\p{k}}^{2}\n{ Sx^{k-\vec{j}\p{k}}} ^{2}
\leq \sum_{k=1}^{\infty}h_{j\p{k}}\n{ Sx^{k-\vec{j}\p{k}}} ^{2}.
\end{align*}
Clearly then, equation \eqref{eq:lem:Sum-HSxhat-finite} will imply all parts of this lemma (since any summable sequence converges to $0$).

Use the Supermartingale Convergence Theorem (\Cref{thm:Supermartingale-convergence-theorem}) on \Cref{lem:Descent-lemma-deterministic} with $\alpha^{k}=\xi^{k}$, $\gamma^k=0$, and $\theta^{k} = \frac{\eta^{k}}{m}\n{ Sx^{k-\vec{j}\p{k}}} ^{2}\p{1-\p{\eta^{k}/h_{j\p{k}}}}$. This implies that $\xi^{k}$ converges with probability $1$, and we have:
\begin{align*}
\sum_{k=1}^{\infty}\frac{ch_{j\p{k}}}{m}\n{ Sx^{k-\vec{j}\p{k}}} ^{2}\p{1-c} &< \infty,\\
\implies\sum_{k=1}^{\infty}h_{j\p{k}}\n{ Sx^{k-\vec{j}\p{k}}} ^{2} &< \infty.
\end{align*}
This proves the lemma.
\end{proof}

\subsubsection{Norm convergence} \label{sec:Norm-convergence-deterministic}

\begin{lem}\label{lem:Norm-convergence-deterministic}
Assume the conditions of  \Cref{lem:Infinite-sum-lemma-deterministic}. Then with probability $1$, $\n{ x^{k}-x^{*}} $ converges
for all $x^{*}\in\normalfont{\text{Fix}}\p{T}$.
\end{lem}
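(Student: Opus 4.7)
The plan is to combine the two conclusions of \Cref{lem:Infinite-sum-lemma-deterministic}—almost-sure convergence of the Lyapunov function $\xi^{k}$ and almost-sure summability $\sum_{k=0}^{\infty}\n{x^{k+1}-x^{k}}^{2}<\infty$—to show that the asynchronicity error term in \eqref{eq:def:Lyapunov} vanishes almost surely. Because $\xi^{k}=\n{x^{k}-x^{*}}^{2}+\frac{1}{m}\sum_{i=1}^{\infty}c_{i}\n{x^{k+1-i}-x^{k-i}}^{2}$, once the asynchronicity error tends to $0$, the classical error $\n{x^{k}-x^{*}}^{2}$ converges as a difference of convergent quantities. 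First I would observe that, by translating the origin as in \Cref{sec:Preliminary-results}, the descent inequality of \Cref{lem:Descent-lemma-deterministic}---and hence the conclusion of \Cref{lem:Infinite-sum-lemma-deterministic}---applies with probability $1$ to each fixed $x^{*}\in\text{Fix}(T)$ separately.

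Next I would establish $\frac{1}{m}\sum_{i=1}^{\infty}c_{i}\n{x^{k+1-i}-x^{k-i}}^{2}\to 0$ almost surely by splitting the sum at an index $N$. Since $c_{i}=\sum_{l=i}^{\infty}\eps_{l}$ is positive, nonincreasing, and tends to $0$ (because $\sum_{l}\eps_{l}<\infty$), and since $D:=\sum_{k'=0}^{\infty}\n{x^{k'+1}-x^{k'}}^{2}<\infty$ on the full-measure event, the tail satisfies
\begin{align*}
\frac{1}{m}\sum_{i=N+1}^{\infty}c_{i}\n{x^{k+1-i}-x^{k-i}}^{2} \leq \frac{c_{N+1}}{m}\sum_{i=N+1}^{\infty}\n{x^{k+1-i}-x^{k-i}}^{2} \leq \frac{c_{N+1}D}{m},
\end{align*}
which is uniformly small in $k$ for $N$ large. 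For fixed $N$, the head is a finite sum whose terms each vanish (using $\n{x^{k+1}-x^{k}}\to 0$, implied by summability of squared differences), so the head lies below any prescribed threshold for all $k$ large enough. Combining the two estimates yields convergence of $\n{x^{k}-x^{*}}^{2}$, and hence of $\n{x^{k}-x^{*}}$, for the chosen $x^{*}$.

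Finally, to upgrade convergence for a single $x^{*}$ to simultaneous convergence for every $x^{*}\in\text{Fix}(T)$, I would follow the separability argument used at the end of \Cref{prop:Norm-convergence-stochastic}: apply the previous step on a countable dense subset of $\text{Fix}(T)$, intersect the corresponding full-measure events, and pass to an arbitrary $x^{*}$ by a triangle-inequality approximation, citing \cite{CombettesPesquet2015_stochastic} (Prop.~2.3(iii)) exactly as in the stochastic case.

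The main obstacle is the decay of the asynchronicity error. It is tempting to invoke the Convolution \Cref{lem:Convolution-lemma} as in the stochastic proof, but this would require $(c_{i})\in\ell^{1}$, i.e.\ $\sum_{i}c_{i}=\sum_{l}l\eps_{l}<\infty$, which is strictly stronger than the standing hypothesis $\sum_{l}\eps_{l}<\infty$ of \Cref{asmp:Coefficient-formula-deterministic}. The head/tail split sidesteps this: the sequence $(\n{x^{k+1-i}-x^{k-i}}^{2})_{i}$ is itself summable with sum uniformly bounded in $k$, so multiplication by a bounded $c_{i}$ decreasing to $0$ is enough to control the tail, while pointwise decay of iterate differences handles the head.
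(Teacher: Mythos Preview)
Your proof is correct, but it differs from the paper's in one notable way, and your stated reason for that difference rests on a small misconception.

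The paper \emph{does} invoke \Cref{lem:Convolution-lemma} here, but with the roles reversed relative to the stochastic case: it takes the $\ell^{1}$ sequence to be $\bigl(\n{x^{i+1}-x^{i}}^{2}\bigr)_{i}$ (which is summable by \Cref{lem:Infinite-sum-lemma-deterministic}) and the bounded-and-vanishing sequence to be $(c_{i})$ (since $c_{i}=\sum_{l\geq i}\eps_{l}\to 0$). Because convolution is symmetric, the lemma applies just as well in this configuration; there is no need for $(c_{i})\in\ell^{1}$. The paper even flags this explicitly (``Notice the reversal of roles from \Cref{prop:Norm-convergence-stochastic}''). So your remark that invoking \Cref{lem:Convolution-lemma} ``would require $(c_{i})\in\ell^{1}$'' overlooks this swap.

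That said, your head/tail split is a perfectly valid alternative---indeed it is essentially the elementary proof of the relevant case of \Cref{lem:Convolution-lemma} written out by hand. It buys self-containment at the cost of a few more lines; the paper's route is terser but leans on the lemma. The separability argument for upgrading to all $x^{*}\in\text{Fix}(T)$ is identical in both.
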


\begin{proof}
\textbf{1) Difference sum converges to $0$:}
\begin{align*}
&\quad\frac{1}{m}\sum_{i=1}^{\infty}c_{i}\n{ x^{k+1-i}-x^{k-i}} ^{2}\\
&= \p{\p{0,\ldots,0,c_1,c_2,\ldots}*\p{\ldots,\n{x^{(i-1)+1}-x^{i-1}}^2,\n{x^{i+1}-x^{i}}^2,\n{x^{(i+1)+1}-x^{(i+1)}}^2,\ldots}}(k)
\end{align*}
Hence the difference sum is the convolution of a bounded sequence that converges to $0$ as $i\to\infty$ (by  \Cref{asmp:Coefficient-formula-deterministic}), and an $\ell^1$ sequence (by  \Cref{lem:Infinite-sum-lemma-deterministic}), respectively. Notice the reversal of roles from  \Cref{prop:Norm-convergence-stochastic}. Therefore, by  \Cref{lem:Convolution-lemma}, the difference sum converges to $0$ with probability $1$.

\textbf{2) Norm Convergence:} Therefore for any particular $x^{*}\in \text{Fix}(T)$, with probability $1$, $\n{ x^{k}-x^{*}} $ converges. As argued before in the proof of \Cref{prop:Norm-convergence-stochastic}, because the space is \textit{separable}, this implies that with probability $1$, $\n{ x^{k}-x^{*}}$ converges for \textbf{all} $x^{*}\in\text{Fix}\p{T}$.
\end{proof}

\subsubsection{Fixed-point-residual strong convergence on subsequences of bounded delay}

\begin{lem}[FPR strong convergence] \label{lem:FPR-strong-convergence-deterministic}
Let the conditions of \Cref{lem:Infinite-sum-lemma-deterministic} hold. Let $J\geq\liminf\,j\p{k}$. Let $Q_{J}\subset\NN$ be
the subsequence of indices, $k$, on which the current delay, $j(k)$, is less than or equal to $J$ (see \Cref{def:Convergence-subsequences-bd-delay}). On this subsequence, we have $\n{ Sx^{k}} \to0$.

\end{lem}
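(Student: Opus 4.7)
The plan is to bound $\n{Sx^k}$ on $Q_J$ by inserting the delayed iterate via the triangle inequality:
\begin{align*}
\n{ Sx^{k}} \leq \n{ Sx^{k}-S\hx^{k}} + \n{ S\hx^{k}} \leq 2\n{ x^{k}-\hx^{k}} + \n{ S\hx^{k}},
\end{align*}
where the final inequality uses that $S=I-T$ is $2$-Lipschitz since $T$ is nonexpansive. I would then show each of the two terms on the right vanishes as $k\to\infty$ along $Q_J$.

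For the fixed-point-residual term $\n{S\hx^k}$, I would first observe that the step-size function $h_j$ from \Cref{lem:Descent-lemma-deterministic} is nonincreasing in $j$: its denominator $1+\frac{c_1}{m}+\sum_{i=1}^{j}\frac{1}{\eps_i}$ is strictly increasing in $j$ because each $\eps_i>0$. Consequently, for every $k\in Q_J$ we have $h_{j(k)}\geq h_J>0$. \Cref{lem:Infinite-sum-lemma-deterministic} tells us that $h_{j(k)}\n{Sx^{k-\vec{j}(k)}}^{2}\to 0$ over the full index sequence, so dividing by the strictly positive constant $h_J$ yields $\n{S\hx^k}^{2}\to 0$ along $Q_J$.

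For the asynchronicity term $\n{x^k-\hx^k}$, I would telescope block-by-block. Since $\hx_i^k = x_i^{k-j(k,i)}$ and $j(k,i)\leq j(k)\leq J$ on $Q_J$,
\begin{align*}
\n{ x^{k}-\hx^{k}} \leq \sum_{i=1}^{m}\n{ x_i^{k}-x_i^{k-j(k,i)}} \leq \sum_{i=1}^{m}\sum_{l=1}^{j(k,i)}\n{ x^{k-l+1}-x^{k-l}} \leq mJ\max_{0\leq l< J}\n{ x^{k-l}-x^{k-l-1}}.
\end{align*}
\Cref{lem:Infinite-sum-lemma-deterministic} also gives $\n{x^{k+1}-x^k}\to 0$ on the full sequence, so the maximum on the right tends to $0$ as $k\to\infty$ along $Q_J$. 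Combining both limits finishes the proof.

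The main obstacle is conceptual rather than technical: one must recognize that restricting to $Q_J$ bounds every component delay $j(k,i)$ (not merely the maximum $j(k)$), so the block-wise telescoping truncates at a fixed length $J$, and that the monotonicity of $h_j$ converts the weighted FPR summability from \Cref{lem:Infinite-sum-lemma-deterministic} into unweighted convergence along $Q_J$. Once these two observations are in hand, the rest is bookkeeping nearly identical to the stochastic argument in \Cref{prop:FPRSC-stochastic}.
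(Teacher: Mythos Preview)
Your proposal is correct and follows essentially the same approach as the paper: insert $S\hx^{k}$ via the triangle inequality, use the $2$-Lipschitzness of $S$, then handle the delayed FPR term via the monotonicity of $h_j$ on $Q_J$ and the asynchronicity term via a block-wise telescoping bounded in length by $J$. The only cosmetic difference is that the paper uses the summability $\sum_{k}h_{j(k)}\n{S\hx^{k}}^{2}<\infty$ (restricting the sum to $Q_J$) rather than the already-stated consequence $h_{j(k)}\n{S\hx^{k}}^{2}\to 0$ that you invoke; both are provided by \Cref{lem:Infinite-sum-lemma-deterministic} and yield the same conclusion.
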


\begin{proof}
\textbf{1) Delayed fixed-point residual} $\n{ Sx^{k-\vec{j}\p{k}}} \to0$
\textbf{on ${Q_{J}}$.} The starting point is \eqref{eq:lem:Sum-HSxhat-finite}
from \Cref{lem:Infinite-sum-lemma-deterministic}:
\begin{align*}
\sum_{k=1}^{\infty}h_{j\p{k}}\n{ Sx^{k-\vec{j}\p{k}}} ^{2} & < \infty,
\end{align*}
Consider the subsequence $Q_{J}\subset\NN$. On this subsequence, the above becomes:
\begin{align*}
\infty & > \sum_{k\in Q_{J}}h_{j\p{k}}\n{ Sx^{k-\vec{j}\p{k}}} ^{2}
 \geq \sum_{k\in Q_{J}}h_{T}\n{ Sx^{k-\vec{j}\p{k}}} ^{2}\qquad\text{ (since }h_{j}\text{ is decreasing in $j$)}.
\end{align*}
Hence $\infty  > \sum_{k\in Q_{J}}\n{ Sx^{k-\vec{j}\p{k}}} ^{2}$. So $\n{ Sx^{k-\vec{j}\p{k}}} \to0$ on $Q_{J}$.

\textbf{2) Fixed-point residual strong convergence.}
\begin{align*}
\n{ Sx^{k}}  & \leq \n{ Sx^{k}-Sx^{k-\vec{j}\p{k}}} +\n{ Sx^{k-\vec{j}\p{k}}} \\
& \leq 2\n{ x^{k}-x^{k-\vec{j}\p{k}}} +\n{ Sx^{k-\vec{j}\p{k}}} \\
& \leq 2\sum_{l=1}^m \n{ x_l^{k}-x_l^{k-j(k,l)}} +\n{ Sx^{k-\vec{j}\p{k}}} \\
& \leq 2\sum_{l=1}^m\sum_{i=1}^{j(k,l)} \n{ x_l^{k+1-i}-x_l^{k-i}} +\n{ Sx^{k-\vec{j}\p{k}}} \\
& \leq 2m\p{\n{ x^{k}-x^{k-1}} +\ldots+\n{ x^{k-\p{T+1}}-x^{k-T}} }+\n{ Sx^{k-j\p{k}}} \to0.
\end{align*}
The last line converges to $0$ because $\n{ x^{k}-x^{k-1}} \to0$ and $\n{ Sx^{k-j\p{k}}} \to0$. Hence $\n{ Sx^{k}} \to0$ on $Q_J$.
\end{proof}

\subsubsection{Proof of \Cref{thm:Convergence-deterministic-delays-full}} \label{sec:sub:sub:Proof-of-theorem-deterministic}
%
\begin{proof}
Norm convergence was proven in  \Cref{lem:Norm-convergence-deterministic}. FPR strong convergence on subsequences of bounded delay was proven in \Cref{lem:FPR-strong-convergence-deterministic}. Having satisfied the conditions of \Cref{prop:Convergence-of-fixed-point-algorithms}, we conclude that the sequence of ARock iterates converges to a solution with probability $1$ on subsequence of bounded delay.
\end{proof}
%
\subsection{Parameter choice} \label{sec:Parameter-choice-deterministic}
The parameters $\eps_1, \eps_2,\ldots$ are arbitrary. However, for the purposes of simplicity and demonstration, $\epsilon_l$ was set to $l^{1+\gamma}\sqrt{m}$ for $\gamma>0$ to obtain  \Cref{thm:Convergence-deterministic-delays-intro} in the introduction, from the more general  \Cref{thm:Convergence-deterministic-delays-full}. Integration is used to simplify the summations involved in obtaining the step size formula.
%
%
\pdfbookmark[0]{References}{References}
\printbibliography
%
\end{document}